\title{Equations of tropical varieties} 
\author{Jeffrey Giansiracusa$^1$} 
\email{j.h.giansiracusa@swansea.ac.uk} 
\address{$^1$Department of
  Mathematics, Swansea University \\ Singleton Park \\ Swansea SA2 8PP, UK} 
\author{Noah  Giansiracusa$^2$} 
\email{noahgian@uga.edu}
\address{$^2$Department of Mathematics\\
University of Georgia\\
Athens, GA 30602, USA}
\date{5 January, 2016}
\subjclass[2010]{14T05 (primary), 14A20 (secondary)}
\DeclareSymbolFont{cmlargesymbols}{OMX}{cmex}{m}{n}
\DeclareMathSymbol{\mycoprod}{\mathop}{cmlargesymbols}{"60}
\DeclareFontFamily{OT1}{pzc}{}
\DeclareFontShape{OT1}{pzc}{m}{it}{<-> s * [1.10] pzcmi7t}{}
\DeclareMathAlphabet{\mathpzc}{OT1}{pzc}{m}{it}
\numberwithin{equation}{subsection}
\newtheorem{thmA}{Theorem}
\newtheorem{theorem}{Theorem}[subsection]  
\newtheorem{lemma}[theorem]{Lemma} 
\newtheorem{proposition}[theorem]{Proposition}
\newtheorem{corollary}[theorem]{Corollary}
\theoremstyle{remark} 
\newtheorem{remark}[theorem]{Remark}
\newtheorem{definition}[theorem]{Definition}
\newtheorem{example}[theorem]{Example}
\newcommand{\Cox}{\mathrm{Cox}}
\newcommand{\Cl}{\mathrm{Cl}}
\newcommand{\Hom}{\mathrm{Hom}}
\newcommand{\im}{\operatorname{im}}
\newcommand{\id}{\mathrm{id}}
\newcommand{\F}{\mathbb{F}}
\newcommand{\B}{\mathbb{B}}
\newcommand{\Fun}{\mathbb{F}_1}
\newcommand{\C}{\mathbb{C}}
\newcommand{\R}{\mathbb{R}}
\newcommand{\N}{\mathbb{N}}
\newcommand{\Z}{\mathbb{Z}}
\newcommand{\PP}{\mathbb{P}}
\newcommand{\T}{\mathbb{T}}
\newcommand{\A}{\mathbb{A}_\T}
\newcommand{\spec}{\mathrm{Spec}\:}
\newcommand{\proj}{\mathrm{Proj}\:}
\newcommand{\Trop}{\mathpzc{Trop}}
\newcommand{\trop}{\mathpzc{trop}}
\newcommand{\val}{\mathpzc{Val}}
\newcommand{\bend}{\mathpzc{B}}
\newcommand{\Bend}{\mathpzc{Bend}}
\newcommand{\setBend}{\mathpzc{bend}}
\newcommand{\supp}{\mathrm{supp}}
\newcommand{\Sch}{\mathrm{Sch}}
\newcommand{\Pic}{\mathrm{Pic}}
\newcommand{\defeq}{\mathbin{\mathpalette{\vcenter{\hbox{$:$}}}=}}
\def\co{\colon\thinspace} 
\begin{document}

\begin{abstract}
  We introduce a scheme-theoretic enrichment of the principal objects of tropical geometry.  Using a
  category of semiring schemes, we construct tropical hypersurfaces as schemes over idempotent
  semirings such as $\T = (\mathbb{R}\cup \{-\infty\}, \mathrm{max}, +)$ by realizing them as
  solution sets to explicit systems of tropical equations that are uniquely determined by idempotent module theory.  We then define a tropicalization functor that sends closed subschemes of a toric
  variety over a ring $R$ with non-archimedean valuation to closed subschemes of the corresponding
  tropical toric variety.  Upon passing to the set of $\T$-points this reduces to Kajiwara-Payne's
  extended tropicalization, and in the case of a projective hypersurface we show that the scheme
  structure determines the multiplicities attached to the top-dimensional cells.  By varying the
  valuation, these tropicalizations form algebraic families of $\T$-schemes parameterized by a
  moduli space of valuations on $R$ that we construct.  For projective subschemes, the Hilbert polynomial is preserved by
  tropicalization, regardless of the valuation. We conclude with some examples and a discussion of
  tropical bases in the scheme-theoretic setting.
\end{abstract}

\maketitle
\tableofcontents

\begin{center}\emph{Dedicated to Max and Add(ie)}\end{center}

\section{Introduction}
Tropical geometry is an emerging tool in algebraic geometry that can transform certain questions into
combinatorial problems by replacing a variety with a polyhedral object called a tropical variety.
It has had striking applications to a range of subjects, such as enumerative geometry
\cite{Mikhalkin-P2, Fomin-Mikhalkin, Gathmann-Markwig,Ardila-Block}, classical geometry \cite{Brill-Noether, Baker},
intersection theory \cite{Katz,Gibney-Maclagan,Osserman-Payne}, moduli spaces and compactifications
\cite{Tevelev,Hacking-Keel-Tevelev,Abramovich-Caporaso-Payne, Sturmfels-Sam-Ren}, mirror symmetry
\cite{Gross-P2,Tropical-vertex,Gross-tropical}, abelian varieties \cite{Gubler,Caporaso-Viviani},
representation theory \cite{Cluster-I,Russian-rep}, algebraic statistics and mathematical biology
\cite{Pachter-Sturmfels, Manon} (and many more papers by many more authors).  Since its inception,
it has been tempting to look for algebraic foundations of tropical geometry, e.g., to view tropical
varieties as varieties in a more literal sense and to understand tropicalization as a degeneration
taking place in one common algebro-geometric world.  However, tropical geometry is based on the
idempotent semiring $\T = (\R\cup \{-\infty\},\max,+)$, which is an object outside the traditional
scope of algebraic geometry.

Motivated by the desire to do algebraic geometry over the \emph{field with one element}, $\Fun$,
various authors have constructed extensions of Grothendieck's scheme theory to accommodate geometric
objects whose functions form algebraic objects outside the category of rings, such as semirings and
monoids---the context of $\Fun$-geometry.  The three theories developed in \cite{Durov,Toen-Vaquie,
  Lorscheid-blueprints1} essentially coincide over semirings, where the resulting schemes can be
described in familiar terms either as spaces equipped with a sheaf of semirings, or as functors of
points extended from rings to the larger category of semirings.  Although these three theories
produce distinct categories of $\Fun$-schemes, there is a smaller category of naive $\Fun$-schemes
(containing the category of split toric varieties with torus-equivariant morphisms) that embeds as a
full subcategory of each and is appropriate for the purposes of this paper; moreover, there are
base-change functors from (each version of) $\Fun$-schemes to schemes over any ring or semiring.
The above-cited authors have each speculated that the category of schemes over the semiring $\T$
might provide a useful foundation for tropical geometry.  However, tropicalization, as it currently
exists in the literature, produces tropical varieties as sets rather than as solutions to systems of
tropical equations.  Since the set of geometric points of a scheme is very far from determining the
scheme, the challenge is to lift tropicalization to schemes in an appropriate way.

In traditional tropical geometry (e.g., \cite{MS}) one considers subvarieties of a torus defined
over a non-archimedean valued field $k$; usually one assumes the valuation is nontrivial and the
field is algebraically closed and complete with respect to the valuation.  Tropicalization sends a
subvariety $Z$ of the torus $(k^\times)^n$ to the polyhedral subset $\trop(Z)$ of the tropical torus
$(\T^\times)^n = \R^n$ constructed as the Euclidean closure of the image of coordinate-wise
valuation.  Kajiwara and Payne extended this set-theoretic tropicalization to subvarieties of a
toric variety by using the stratification by torus orbits \cite{Kajiwara,Payne}.  A fan determines a
toric scheme $X$ over $\Fun$ and base-change to $k$ yields a familiar toric variety $X_k$, while
base-change to $\T$ yields a tropical toric scheme $X_\T$. The $\T$-points of $X_\T$ (or
equivalently, of $X$) form the partial compactification of $N_\R$ dual to the fan, and the
Kajiwara-Payne tropicalization map $\trop$ sends subvarieties of $X_k$ to subsets of $X(\T)$.  We
give a scheme-theoretic enhancement, $\Trop$, of this process.

\begin{thmA}\label{thm:main}
  Let $R$ be a ring equipped with a non-archimedean valuation (see Definition \ref{def:valuation})
  $\nu : R \rightarrow S$, where $S$ is an idempotent semiring (such as $\T$), and let $X$ be a
  toric scheme over $\Fun$.  There is a tropicalization map
  \[
\Trop_X^\nu\co \{\text{closed subschemes of } X_R\} \to \{\text{closed subschemes of } X_S\}
\]
such that
\begin{enumerate}
\item it is functorial in $X$ with respect to torus-equivariant morphisms, and
\item when $S=\T$ the composition with $\Hom_{\Sch/\T}(\spec\T, -)$ recovers the set-theoretic
  tropicalization of Kajiwara-Payne.
\end{enumerate}
\end{thmA}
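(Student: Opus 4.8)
The plan is to construct $\Trop_X^\nu$ first for an affine toric chart and then show the construction glues. Let $X$ be given by a fan in $N_\R$; each cone $\sigma$ determines an affine toric $\Fun$-scheme $\spec \Fun[\sigma^\vee \cap M]$, so that $X_R$ is covered by the affine charts $\spec R[\sigma^\vee \cap M]$ and $X_S$ by $\spec S[\sigma^\vee \cap M]$. A closed subscheme $Z \subseteq X_R$ is determined by its ideal sheaf, hence on each chart by an ideal $I_\sigma \subseteq R[\sigma^\vee \cap M]$. The first step is to define, for an ideal $I$ in a monoid algebra $R[\mathcal{A}]$ with $\mathcal{A}$ a commutative monoid, a \emph{tropicalized ideal} $\Trop^\nu(I)$ in $S[\mathcal{A}]$: given a polynomial $f = \sum_{u} a_u \chi^u \in I$, one forms its \emph{tropicalization} $\trop^\nu(f) = \bigoplus_u \nu(a_u)\chi^u \in S[\mathcal{A}]$, and then $\Trop^\nu(I)$ should be the ``bend congruence'' or ideal generated by $\{\trop^\nu(f) : f \in I\}$ — that is, the congruence on $S[\mathcal{A}]$ generated by the relations $\trop^\nu(f) \sim \trop^\nu(f)\ominus(\text{each term})$, which is exactly the scheme-theoretic incarnation of the min-is-attained-twice condition. (The precise machinery for turning a tropical polynomial into a congruence — the ``bend relations'' $\bend$ — is presumably set up in the body of the paper; I would invoke it here.) Passing to the quotient semiring $S[\mathcal{A}]/\Trop^\nu(I)$ yields a closed subscheme of $X_S$ over the chart.

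The second step is \textbf{compatibility with localization}, which is what makes the charts glue into a global $\Trop_X^\nu(Z)$. For an inclusion of cones $\tau \subseteq \sigma$ one has a localization $R[\sigma^\vee\cap M] \to R[\tau^\vee\cap M]$ inverting a character $\chi^m$; I must check that tropicalizing the ideal and then localizing agrees with localizing and then tropicalizing. This reduces to the observation that $\trop^\nu$ is multiplicative on monomials and that inverting a monomial commutes with forming the bend congruence — monomials are units, so they do not affect which coefficient is ``attained twice.'' Since the semiring localization at a monomial is well-behaved, the tropicalized ideal sheaves agree on overlaps and define a genuine closed subscheme $\Trop_X^\nu(Z) \subseteq X_S$.

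For \textbf{functoriality in $X$} (part (1)), a torus-equivariant morphism $X \to X'$ of toric $\Fun$-schemes is induced by a map of fans, hence locally by a monoid homomorphism $\mathcal{A}' \to \mathcal{A}$, giving compatible ring maps $R[\mathcal{A}'] \to R[\mathcal{A}]$ and semiring maps $S[\mathcal{A}'] \to S[\mathcal{A}]$. The key point is that $\trop^\nu$ commutes with monoid homomorphisms on the nose (it only relabels monomials and applies $\nu$ to coefficients), so the pullback of the tropicalized ideal is contained in the tropicalization of the pullback; one then checks this is compatible with taking the generated congruence. Naturality of the ideal-sheaf pullback then upgrades this to the statement that $\Trop^\nu$ commutes with the pullback of closed subschemes. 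For \textbf{part (2)}, with $S = \T$, I would compute $\Hom_{\Sch/\T}(\spec\T, X_\T)$ chart by chart: a $\T$-point of $\spec\T[\sigma^\vee\cap M]$ is a monoid homomorphism $\sigma^\vee\cap M \to \T$, i.e.\ a point of the partial compactification of $N_\R$, and such a point lies in the vanishing locus of the bend congruence of $\trop^\nu(f)$ precisely when the piecewise-linear function $\trop^\nu(f)$ achieves its max at least twice there — which is the standard Kajiwara–Payne condition cut out by the initial/valuation data. Summing over the torus-orbit stratification recovers $\trop(Z)$ exactly.

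The main obstacle I anticipate is the gluing/localization step: one must be careful that the congruence generated by $\{\trop^\nu(f) : f \in I\}$ on a chart really is the restriction of the global object, since semiring congruences do not behave as transparently under localization as ideals in rings do, and taking ``the congruence generated by'' does not commute with arbitrary colimits. A secondary subtlety is checking that $\Trop^\nu(I)$ depends only on the closed subscheme and not on a choice of generators of $I$ — this needs that $\trop^\nu$ of an $R[\mathcal{A}]$-linear combination $\sum g_j f_j$ lies in the bend congruence generated by the $\trop^\nu(f_j)$, which in turn rests on the fact (presumably the technical heart of the paper, established before this theorem) that the bend relations are generated by Cramer-type identities from tropical linear algebra and are closed under such combinations.
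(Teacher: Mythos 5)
Your overall architecture --- define $\Trop^\nu$ on affine charts by applying the bend congruence to the coefficientwise valuation of all elements of the defining ideal, glue via compatibility with monomial localization, and deduce functoriality from monoid homomorphisms --- is the paper's approach. However, two of the steps you treat as routine are in fact where the real work lies, and your sketches for them are not correct as stated.

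The most serious gap is in your argument for functoriality in $X$. You assert that ``$\trop^\nu$ commutes with monoid homomorphisms on the nose (it only relabels monomials and applies $\nu$ to coefficients).'' This is false whenever the monoid map $\varphi : A' \to A$ fails to be injective on $\supp(f)$: if $\varphi$ identifies monomials $\ell_1,\ldots,\ell_r$ to a single monomial $m$, then $\varphi(\nu(f))$ has coefficient $\sum_j \nu(a_{\ell_j})$ on $m$, while $\nu(\varphi(f))$ has coefficient $\nu\bigl(\sum_j a_{\ell_j}\bigr)$, and these differ in general. Torus-equivariant maps of toric varieties routinely induce non-injective monoid maps, so this case cannot be ignored. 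The correct statement is an inclusion $\varphi_*\bend(\nu(f)) \subset \bend(\nu(\varphi(f)))$, and proving it requires the subadditivity axiom $\nu(a+b)+\nu(a)+\nu(b)=\nu(a)+\nu(b)$ together with the identity $\nu(a+b)+\nu(a) = \nu(a+b)+\nu(a)+\nu(b)$ (Lemma \ref{lem:val}(2) in the paper), applied coefficient-by-coefficient to show that each bend relation of $\varphi(\nu(f))$ is absorbed by the corresponding bend relation of $\nu(\varphi(f))$. This is also exactly where the hypothesis $\nu(-1)=1_S$ (nonstandard for general idempotent targets) enters. Your sketch skips the analytic content of the theorem.

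A second issue is the ``subtlety'' you flag about independence of generators. You suggest this rests on ``$\trop^\nu$ of a linear combination $\sum g_j f_j$ lies in the bend congruence generated by the $\trop^\nu(f_j)$.'' This is not a fact --- it is precisely the phenomenon that makes tropical bases nontrivial, and it fails for most examples (e.g.\ $x^2+xy+y^2$, where $(x-y)(x^2+xy+y^2)=x^3-y^3$ produces a monomial relation not generated by bend relations of the original polynomial). The definition is well-posed not because the bend congruence is generated by any finite set of $\trop^\nu(f_j)$, but because one quantifies over \emph{all} $f\in I$ from the start; $\Trop^\nu$ is a function of $I$, not of a choice of generators, by construction. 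Finally, in the gluing step note that you need monomials to be non-zero-divisors, not units --- in the chart $S[\sigma^\vee\cap M]$ for a nontrivial cone $\sigma$ the localized monomial $\chi^m$ is not a unit, and the commutation of $\bend(-)$ and $\trop(-)$ with this localization uses integrality of the monoid algebra rather than invertibility of $\chi^m$.
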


If $Z \subset X_R$ is irreducible of dimension $d$
and not contained in the toric boundary, then the restriction of the set-theoretic tropicalization $\trop(Z)$ to the
tropical torus admits the structure of a polyhedral complex of pure dimension $d$ and there are natural
number multiplicities associated to the facets such that the well-known balancing condition is
satisfied (see, e.g., \cite[\S2]{tropdiscrim}).  This balanced polyhedral complex is often thought
of as the tropical analogue of the algebraic cycle $[Z]$.  We show in Corollary
\ref{cor:multiplicities} that, when $Z$ is a hypersurface in projective space, the scheme
$\Trop_X^{\nu}(Z)$ determines these facet multiplicities.\footnote{In an earlier preprint version of this
  paper we conjectured this to be true in higher codimension, and Maclagan and Rinc\'on subsequently proved this \cite{Maclagan-Rincon}.}

\begin{thmA}\label{thm:HF}
  Let $\nu : k \rightarrow S$ be a valued field with $S$ a totally ordered idempotent semifield.
  Given a closed subscheme $Z\subset \PP^n_k$, the tropicalization $\Trop_{\PP^n}^\nu(Z) \subset
  \PP^n_S$ has a well-defined Hilbert polynomial, and it coincides with that of $Z$.
\end{thmA}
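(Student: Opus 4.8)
The plan is to compute the Hilbert function of $\Trop_{\PP^n}^\nu(Z)$ one graded piece at a time, reducing each computation to tropical linear algebra, where the dimension of the tropicalization of a linear space over $k$ is controlled by the rank of its valuated matroid and hence equals the dimension of the space one started with. Write $I\subset k[x_0,\dots,x_n]$ for the homogeneous ideal of $Z$ and $N_d=\binom{n+d}{n}$. Recall that a closed subscheme of $\PP^n_S$ corresponds to a homogeneous congruence $E$ on $S[x_0,\dots,x_n]$ with homogeneous coordinate semiring $A=S[x]/E$, whose graded pieces $A_d$ are finitely generated $S$-semimodules; I take the Hilbert function to be $\mathrm{HF}(d):=\dim_S A_d$, where $\dim_S$ denotes the dimension function on the class of $S$-semimodules that occur as quotients of a free semimodule by the bend congruence of a linear subspace over $S$. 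Establishing that $\dim_S$ is well defined and finite on this class is part of what the theorem requires, and I would treat it together with the computation below.

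First I would check that the congruence $E$ cutting out $\Trop_{\PP^n}^\nu(Z)$ is homogeneous and that $E_d$ is precisely the bend congruence on $S[x]_d\cong S^{N_d}$ generated by $\{\trop(f):f\in I_d\}$. This follows from $I$ being homogeneous: each $f\in I$ is the sum of its graded pieces $f_e\in I_e$, the bend relations of $\trop(f)$ are obtained from those of the $\trop(f_e)$ by adding a common term, multiplying a bend relation of an $f\in I_e$ by a monomial yields a bend relation of an element of $I$, and applying the degree-$d$ projection to any chain witnessing a relation between two degree-$d$ elements shows that relations of other degrees are never needed. Consequently $A_d=S^{N_d}/E_d$ is exactly the degree-$d$ coordinate semimodule of the scheme-theoretic tropicalization of the linear subspace $I_d\subseteq k^{N_d}$.

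Next I would invoke the tropical linear algebra developed in the earlier sections. The scheme-theoretic tropicalization of a $c$-dimensional subspace $W\subseteq k^{N}$ is the tropical linear space attached to the valuated matroid of $W$, namely the $\nu$-valuations of the maximal minors of a matrix whose rows span $W$; this valuated matroid has rank $c$, and the quotient of $S^{N}$ by the bend congruence of $\trop(W)$ has dimension $N-c$. Concretely, row-reducing $W$ over the field $k$ partitions the coordinates into $c$ pivots and $N-c$ free ones, and in the tropical quotient each pivot coordinate becomes an $S$-linear expression in the free ones while the free coordinates satisfy no relation among themselves. This is where the hypotheses that $S$ be a totally ordered idempotent semifield enter: they make tropical Gaussian elimination and the valuated exchange axiom available, so that the reduction over $k$ tropicalizes without cancellation of leading terms. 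Applying this with $W=I_d$ gives $\dim_S A_d=N_d-\dim_k I_d=\dim_k\big(k[x]_d/I_d\big)=\mathrm{HF}_Z(d)$.

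Since $\mathrm{HF}_Z(d)$ agrees with a polynomial in $d$ for $d\gg 0$ by the classical theory, the same polynomial computes $\mathrm{HF}_{\Trop(Z)}(d)$ for $d\gg 0$; in particular $\Trop_{\PP^n}^\nu(Z)$ has a well-defined Hilbert polynomial, equal to that of $Z$. (No claim is made, or needed, that an arbitrary closed subscheme of $\PP^n_S$ has a polynomial Hilbert function.) The main obstacle is the foundational pair of points flagged above: fixing the right notion of $\dim_S$ and proving it is well defined on the relevant semimodules, and proving that scheme-theoretic tropicalization of a linear subspace neither raises nor lowers this dimension. The two failure modes to rule out are that the tropical quotient could pick up extra relations, lowering the dimension, or that the row reduction over $k$ could fail to tropicalize compatibly, raising it; both are excluded by the classification of tropical linear spaces and the valuated-matroid formalism set up earlier, which is exactly the structure that makes the whole argument local in the degree $d$ and therefore free of any higher obstruction.
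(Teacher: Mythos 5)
Your step reducing the question to the degree-$d$ graded piece is sound and matches the paper's opening move. But your central computational claim is wrong. You assert that after row-reduction over $k$, ``in the tropical quotient each pivot coordinate becomes an $S$-linear expression in the free ones.'' This is false: the bend congruence does not express any variable as a linear combination of the others. Take $W = \langle (1,1,1)\rangle \subset k^3$ with the trivial valuation, so $\nu(f) = x_1 + x_2 + x_3$ and the bend relations are
\[
x_1 + x_2 \;\sim\; x_1 + x_3 \;\sim\; x_2 + x_3 \;\sim\; x_1 + x_2 + x_3.
\]
In the quotient $S^3/\bend(\nu(f))$ no coordinate equals an $S$-linear form in the other two; one only obtains order relations of the shape $x_1 \le x_2 + x_3$. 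There are no pivots in the sense you describe, so your argument that $\dim_S(S^{N_d}/\bend\trop(I_d)) = N_d - \dim_k I_d$ does not go through. The conclusion might still be true, but you would need to prove a genuine tropical duality statement ($\dim_S$ of the quotient equals the corank of the linear space), and the pivot picture does not do that.

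The paper avoids this difficulty by design: the tropical Hilbert function is \emph{defined} on the dual side, as $d \mapsto \dim_S \big((A\otimes S/K)_d\big)^\vee$. By Propositions \ref{prop:bend-locus} and \ref{prop:intersection-of-bend-loci}, this dual is precisely the tropical linear space $\trop\big((A_d\otimes k/I_d)^\vee\big)$, whose rank is $\dim_k(A_d\otimes k/I_d)$; by Lemma \ref{lem:lin-space-dim} rank equals $\dim_S$ for tropical linear spaces over a totally ordered semifield (this is where the hypothesis on $S$ enters, via \cite[Prop.\ 2.5]{Mikhalkin-Zharkov} after base change to $\T$). Working with the dual turns everything into a statement about submodules of free modules, where valuated matroid theory applies directly, rather than about quotients, whose dimension theory you would otherwise have to develop from scratch. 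If you want to pursue your direct approach you need to replace the Gaussian-elimination heuristic with a proof of the duality $\dim_S(S^N/\bend(L)) = N - \operatorname{rank}(L)$ for tropical linear spaces $L$; the paper's route sidesteps that lemma entirely.
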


This suggests that the process of sending a variety to its tropicalization behaves like a flat
degeneration.

We briefly explain the idea behind the construction of this scheme-theoretic tropicalization.  Due
to the nature of $(\mathrm{max},+)$-algebra, the graph of a tropical polynomial $f$ is piecewise
linear; the regions of linearity are where a single monomial in $f$ strictly dominates and the
``bend locus,'' where the function is nonlinear, is the set of points where the maximum is attained
by at least two monomials simultaneously.  The bend locus (often called a tropical hypersurface or
locus of tropical vanishing) is the tropical analogue of the zero locus of a polynomial over a ring.
We enrich the bend locus of $f$ with a scheme structure by realizing it as the solution set to a
natural system of tropical algebraic equations, the \emph{bend relations} of $f$
(\S\ref{section:affinebendrels}).  These equations are given by equating $f$ with each polynomial
obtained from $f$ by deleting a single monomial.  By the Fundamental Theorem of Tropical Geometry
\cite[Theorem 3.2.4]{MS} (Kapranov's Theorem in the case of a hypersurface), set-theoretic
tropicalization can be recast as intersecting the bend loci of the coefficient-wise valuations of
all polynomials in the ideal defining an affine variety.  Our scheme-theoretic tropicalization is
defined by replacing this set-theoretic intersection with the intersection of bend loci as schemes.
This yields a solution to the implicitization problem for the coordinate-wise valuation map.

An alternative description of scheme-theoretic tropicalization is as follows.  An ideal $I$ defining
an affine subscheme is, in particular, a linear subspace of the coordinate ring
$k[x_1, \ldots, x_n]$ of the ambient space.  The set-theoretic tropicalization of $I$ is then an
(infinite-dimensional) tropical linear space $\trop(I)$ inside $\T[x_1, \ldots, x_n]$.  The
$\T$-module quotient of $\T[x_1, \ldots, x_n]$ by the bend relations of each $f\in \trop(I)$ turns
out to coincide with the $\T$-algebra quotient (Lemma \ref{lem:trop-lintrop}) and thus yields a
closed subscheme of tropical affine space.  For a homogeneous ideal defining a projective
subscheme, the bend relations are compatible with the grading, and thus the tropicalization can be
computed degree by degree in terms of $\T$-module theory and valuated matroids.  From this and the
fact that set-theoretic tropicalization preserves dimension, the Hilbert polynomial result follows.

The essential data in writing down the bend relations, and thus the scheme-theoretic
tropicalization, is a basis of monomials in the coordinate algebra of the ambient space.  Toric
varieties are a natural class of varieties where there is a well-behaved class of monomials in each
coordinate patch and this allows for a global extension of these affine constructions.  We use the
language of schemes over $\Fun$ (in the naive sense that is common to the theories of Durov,
Lorscheid, and To\"en-Vaqui\'e) as a convenient way to keep track of monomials and to provide a
slight generalization of the ambient toric varieties in which tropicalization usually takes place.
These $\Fun$ schemes are locally modelled on monoids; we no longer require the monoids to be toric,
though integrality (\S\ref{sec:monoids}) is still necessary.

One can ask how the tropicalization of $Z\subset X_R$ depends on the valuation $\nu: R \to \T$.  We
show that the valuations on $R$ form an algebraic moduli space and the corresponding
tropicalizations form an algebraic family over this space.

\begin{thmA}\label{thm:ValFamily} Let $R$ be a ring, $X$ a locally integral scheme over $\F_1$, and $Z \subset X_R$ a closed subscheme.
\begin{enumerate}
\item The moduli space $\val(R)$ of valuations on $R$ is represented in affine idempotent semiring
  schemes, and there is a universal valuation $\nu_{\mathrm{univ}} : R \rightarrow
  \Gamma(\val(R),\mathcal{O}_{\val(R)})$ through which all others factor uniquely.  In particular,
  $\val(R)(\T)$ is the set of non-archimedean valuations on $R$.
\item The fiber of the algebraic family $\Trop_X^{\nu_\mathrm{univ}}(Z) \rightarrow \val(R)$ over
  each $\T$-point $\nu : R \rightarrow \T$ is the tropicalization $\Trop_X^\nu(Z) \subset X_\T$.  If
  $X = \PP^n$ and $R$ is a field then the Hilbert polynomials of the fibers exist and are all equal.
\end{enumerate}
\end{thmA}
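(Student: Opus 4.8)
The plan is to realize $\val(R)$ as $\spec$ of an explicitly presented idempotent semiring and to show that forming bend-relation quotients commutes with base change along the valuation; the fibres of the universal family then fall out, and the Hilbert polynomial claim is immediate from Theorem \ref{thm:HF}. For part (1), I would first unwind Definition \ref{def:valuation} into a list of conditions on a set-map $\nu\co R\to S$ to an idempotent semiring: $\nu(0)=0$, $\nu(1)=1$, $\nu(-1)=1$, multiplicativity $\nu(rr')=\nu(r)\nu(r')$, and the non-archimedean inequality $\nu(r+r')\le\nu(r)+\nu(r')$. Since $a\le b$ in an idempotent semiring means $a+b=b$, the last condition is equivalent to the equation $\nu(r+r')+\nu(r)+\nu(r')=\nu(r)+\nu(r')$. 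Let $A$ be the idempotent semiring presented by generators $\{T_r\}_{r\in R}$ subject to exactly these relations for all $r,r'\in R$ (this exists because semirings form a cocomplete category), put $\val(R):=\spec A$, and let $\nu_{\mathrm{univ}}\co R\to A=\Gamma(\val(R),\mathcal{O}_{\val(R)})$ send $r$ to $T_r$. By construction $\nu_{\mathrm{univ}}$ is a valuation, and for any idempotent semiring $S$ a homomorphism $A\to S$ is precisely a choice of elements $\{s_r\}$ satisfying the defining relations, i.e. precisely a valuation $R\to S$; hence $\val(R)$ represents the moduli functor of valuations and every valuation factors uniquely through $\nu_{\mathrm{univ}}$. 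Taking $S=\T$ gives $\val(R)(\T)=\Hom(A,\T)=\{\text{valuations }R\to\T\}$, which under the standard dictionary between valuations and non-archimedean multiplicative seminorms is the underlying set of $(\spec R)^{an}$ \cite{Payne}.

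For part (2), the family in question is the projection $X_A\to\spec A=\val(R)$ restricted to the closed subscheme $\Trop_X^{\nu_{\mathrm{univ}}}(Z)\subset X_A$ produced by Theorem \ref{thm:main}. A $\T$-point $\nu\co R\to\T$ of $\val(R)$ corresponds to a homomorphism $\psi\co A\to\T$ with $\nu=\psi\circ\nu_{\mathrm{univ}}$, and the fibre over it is $\Trop_X^{\nu_{\mathrm{univ}}}(Z)\otimes_A\T$. The key point is that the bend-relation construction commutes with base change: for a homomorphism $\psi\co A\to S'$ of idempotent semirings and a polynomial $f=\sum_i c_i x^i$ over $A$, applying $\psi$ coefficient-wise carries the generating bend relations of $f$ to those of $\psi(f)=\sum_i\psi(c_i)x^i$, so $(A[x]/\bend(f))\otimes_A S'=S'[x]/\bend(\psi(f))$ because $-\otimes_A S'$ is a left adjoint and hence preserves the relevant coequalizers. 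The same holds for the simultaneous quotient by the bend relations of $\nu_{\mathrm{univ}}(f)$ ranging over all $f$ in the defining ideal of $Z$, and it is compatible with the affine gluing used to globalize $\Trop$ over the toric scheme $X$. Since $\psi(\nu_{\mathrm{univ}}(f))=\nu(f)$ for every $f$, the fibre over $\nu$ is cut out in $X_\T$ by the bend relations of $\{\nu(f)\}$, i.e. it is exactly $\Trop_X^\nu(Z)$. Finally, when $X=\PP^n$ and $R=k$ is a field, each $\nu\co k\to\T$ exhibits $k$ as a valued field over the totally ordered idempotent semifield $\T$, so Theorem \ref{thm:HF} applies fibrewise and gives that every fibre has a Hilbert polynomial equal to that of $Z$.

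The main obstacle is getting part (1) exactly right: one must verify that this naive presentation of $A$ genuinely corepresents the valuation functor --- that the universal valuation really is a valuation and that no extra relations are forced or missing --- and that $\val(R)(\T)$ matches the Berkovich analytification on the nose, including whatever extra structure (topology, sheaf) is being remembered. The base-change argument in part (2) is conceptually routine --- left adjoints preserve colimits --- but requires care to commute the formation of the fibre past both the intersection of bend loci over all elements of the ideal and the gluing over the toric charts of $X$.
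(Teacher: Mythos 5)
Your proposal is correct and follows essentially the same route as the paper: part (1) constructs exactly the presentation $S^R_{\mathrm{univ}} = \B[x_a : a\in R]/\langle\text{valuation relations}\rangle$ used in \S\ref{sec:valuations} and \S\ref{sec:moduli}, and part (2) is the base-change argument of Proposition \ref{prop:val-functoriality} (phrased via left adjoints preserving the relevant colimits rather than by directly checking $\varphi_*\bend\trop^\nu(I)=\bend\trop^{\varphi\circ\nu}(I)$, but with the same content), followed by an appeal to Theorem \ref{thm:HF}.
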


\begin{remark}
  If $k$ is a non-archimedean field and $R$ is a $k$-algebra, then the set of valuations on $R$
  extending the valuation on $k$ is the Berkovich analytification of $\spec R$ relative to $k$
  \cite{Berkovich}.  The $\T$-points of our moduli space $\val(R)$ can thus be viewed as an absolute
  version of the analytification of $\spec R$.  This is explored further in \cite{GG2}.
\end{remark}

\subsection{Organization of the paper}
We begin in \S\ref{sec:preliminaries} by recalling some standard material on monoids and semirings
and then giving our slightly generalized definition of valuation.  In \S
\ref{sec:scheme-theory} we discuss the construction of $\Fun$-schemes and semiring schemes, and in
\S \ref{sec:toric} we review some constructions in toric schemes within this setting.  The core of
the paper is \S\ref{sec:hypersurfaces}, where we define bend loci as schemes, and
\S\ref{sec:tropicalization}, where we use this to define and study scheme-theoretic tropicalization.
In \S\ref{sec:numerical-invariants} we study the tropical Hilbert function and the multiplicities on
the facets of a tropical hypersurface and in \S\ref{sec:hypbases} we investigate tropical bases.

\subsection{Acknowledgements}
The first author was supported by EPSRC grant EP/I005908/2, and the second author was supported by
NSF grant DMS-1204440.  We thank Dan Abramovich, Alex Fink, Mark Kambites, Eric Katz,
Oliver Lorscheid, Andrew MacPherson\footnote{MacPherson has been developing related ideas in his
  thesis \cite{MacPherson} and has independently discovered the equations for scheme-theoretic
  tropicalization that we propose here.}, Steven Sam, Bernd Sturmfels, and Sam Payne for useful
conversations and providing valuable feedback on early drafts.  Finally, we are indebted to Diane
Maclagan and Felipe Rinc\'on for reading a draft with great care and discussing these ideas at
length; these discussions helped shape this project.\footnote{These discussions also led Maclagan
  and Rinc\'on in \cite{Maclagan-Rincon} to further develop some of the ideas put forward in this
  paper.}

\section{Algebraic preliminaries: monoids, semirings, and valuations}\label{sec:preliminaries}

\emph{Throughout this paper all monoids, semirings, and rings will be assumed commutative and
  unital.  All monoids are viewed multiplicatively unless otherwise specified.}

\subsection{Monoids and $\Fun$ algebra}\label{sec:monoids}

In this paper we shall work with a naive version of algebra over the so-called ``field with one
element'', $\Fun$, which is entirely described in terms of monoids.  More sophisticated notions of
$\Fun$ algebra exist, such as Durov's commutative algebraic monads \cite{Durov}, but the naive
version recalled here is the one that appears most appropriate for tropical geometry and it provides
a convenient language for working with monoids and (semi)rings in parallel.  This naive $\Fun$
theory (or a slight variation on it) and its algebraic geometry have been studied by many authors,
including \cite{Connes-Consani, Deitmar, Toen-Vaquie, Weibel}.

Rather than defining an object $\Fun$, one starts by defining the category of modules, $\Fun$-Mod,
to be the category of pointed sets.  The basepoint of an $\Fun$-module $M$ is denoted $0_M$ and is
called the zero element of $M$.  This category has a closed symmetric monoidal tensor product
$\otimes$ given by the smash product of pointed sets (take the cartesian product and then collapse
the subset $M\times \{0_N\} \cup \{0_M\} \times N$ to the basepoint).  The two-point set $\{0,1\}$
is a unit for this tensor product.

An \emph{$\Fun$-algebra} is an $\Fun$-module $A$ equipped with a commutative and unital product map
$A\otimes A\to A$ (i.e., it is a commutative monoid in $\Fun$-Mod).  Concretely, an $\Fun$-algebra
is a commutative and unital monoid with a (necessarily unique) element $0_A$ such that $0_A \cdot x
= 0_A$ for all $x$; thus $\Fun$-algebras, as defined here, are sometimes called
\emph{monoids-with-zero}.  The two-point set $\{0,1\}$ admits a multiplication making it an
$\Fun$-algebra, and it is clearly an initial object, so we can denote it by $\Fun$ and speak of
$\Fun$-algebras without ambiguity.

\begin{example}
  The $\Fun$ polynomial algebra $\Fun[x_1, \ldots, x_n]$ is the free abelian monoid-with-zero on $n$
  generators.  The Laurent polynomial algebra $\Fun[x_1^{\pm 1}, \ldots, x_n^{\pm 1}]$ is the
  free abelian group on $n$ generators, together with a disjoint basepoint.  Written additively, these are $\mathbb{N}^n\cup\{-\infty\}$ and $\mathbb{Z}^n\cup\{-\infty\}$, respectively.
\end{example}

\begin{definition}\label{def:intmon}
An $\Fun$-algebra $A$ is \emph{integral} if the subset $A \smallsetminus \{0_A\}$ is
multiplicatively closed (no zero divisors) and the natural map from $A \smallsetminus \{0_A\}$ to
its group completion is injective. 
\end{definition}

An \emph{$A$-module} $M$ is an $\Fun$-module equipped with an
associative and unital action of $A$ given by a map $A\otimes M \to M$.  Concretely, this is a
pointed set with an action of the monoid $A$ such that $0_A$ sends everything to $0_M$.  An
\emph{$A$-algebra} is an $\Fun$-algebra morphism $A\to B$

\subsection{Semirings}\label{section:semirings}

Commutative monoids (viewed additively) admit a tensor product $\otimes$ generalizing that of
abelian groups.    If $M$ and $N$ are commutative monoids then $M\otimes N$ is the quotient of the free
commutative monoid generated by $M\times N$ by the relations 
\begin{enumerate}
\item $(x_1 + x_2,y) \sim (x_1,y) + (x_2,y)$, 
\item $(x,y_1 + y_2)\sim (x,y_1) + (x,y_2)$,
\item $(0,y) \sim 0$ and $(x,0) \sim 0 $,
\end{enumerate}
for all $x,x_1,x_2 \in M$ and $y,y_1,y_2 \in N$.  It satisfies the analogue of the usual universal
property: additive maps $M\otimes N \to L$ are in bijection with bilinear maps $M\times N \to L$.

Just as a ring can be defined as a monoid in the category of abelian groups, a \emph{semiring} is a
monoid in the monoidal category of commutative monoids---that is, an object satisfying all the
axioms of a ring except for the existence of additive inverses.  For a semiring $S$, an
\emph{$S$-module} is a commutative monoid $M$ equipped with an associative action
$S\otimes M \to M$.  An \emph{$S$-algebra} is a morphism of semirings $S\to T$.  Polynomial algebras
$S[x_1, \ldots, x_n]$, and Laurent polynomial algebras, are defined as they are for rings.  The
category of semirings has an initial object, $\N$, so the category of semirings is equivalent to the
category of $\N$-algebras.  A semiring is a \emph{semifield} if every nonzero element admits a
multiplicative inverse.

A semiring $S$ is \emph{idempotent} if $a + a = a$ for all $a\in S$.  In this case (and more generally, for an idempotent commutative monoid) there is a canonical partial order defined by
\[
a \leq b  \text{ if } a+b=b.
\]
The least upper bound of any finite set $\{a_i\}$ of elements exists and is given by the sum $\sum a_i$.  If
the partial order is actually a total order then $\sum a_i$ is equal to the maximum of the $a_i$.

From the perspective of tropical geometry, the central example of an idempotent semiring is the semifield of
\emph{tropical numbers}, $\T$.  As a set,
\[
\T \defeq \R \cup \{-\infty\}.
\] 
The addition operation is defined by the maximum: $a + b = \max\{a,b\}$ if both $a$ and $b$ are
finite.  Multiplication $a\cdot b$ in $\T$ is defined as the usual addition of real numbers $a + b$
if both are finite.  The additive and multiplicative units are $ 0_\T = -\infty$ and $1_\T = 0$,
respectively, and this defines the extension of addition and multiplication to $-\infty$.  

This is a special case of a general construction: given a commutative monoid $(\Gamma,+)$ equipped
with a translation-invariant total order, the set $\Gamma \cup \{-\infty\}$ equipped with the
operations $(\max,+)$ forms an idempotent semiring, and if $\Gamma$ is a group then this yields a
semifield.  These semifields are the targets of Krull valuations.  The tropical numbers $\T$ are the
rank 1 case, which results when $\Gamma$ is $(\R,+)$ with its canonical total order.  Taking $\Gamma=\R^n$
equipped with the lexicographic total order yields the idempotent semifield 
$\R^n_{lex} \cup \{-\infty\}$, which is the target of some higher rank Krull valuations and whose
tropical geometry has been studied in \cite{Banerjee}.

\begin{remark}
  Idempotent totally ordered semifields appear to play much of the role in idempotent algebra and
  geometry of fields in classical algebra and geometry.  
\end{remark}

The \emph{boolean semiring} is the subsemiring
\[
\mathbb{B} \defeq \{-\infty, 0\} \subset \T.
\]  
The boolean semiring is initial in the category of idempotent semrings and every
$\mathbb{B}$-algebra is idempotent, so $\mathbb{B}$-algebras are the same as idempotent semirings.

\subsection{Scalar extension and restriction}\label{sec:scalar-ext}

Given a (semi)ring $S$, there is an adjoint pair of functors
\[
\Fun\text{-Mod} \leftrightarrows S\text{-Mod};
\]
the right adjoint sends an $S$-module to its underlying set with the additive unit as the basepoint,
and the left adjoint, denoted $ - \otimes S$, sends a pointed set $M$ to the free $S$-module
generated by the non-basepoint elements of $M$.  If $M$ is an $\Fun$-algebra then $M\otimes S$ has
an induced $S$-algebra structure.  Note that $-\otimes S$ sends polynomial algebras over $\Fun$ to
polynomial algebras over $S$.

In this paper, $S$-modules equipped with an $\Fun$-descent datum (i.e., modules of the form
$M\otimes S$ for $M$ a specified $\Fun$-module) play a particularly important role.  For $f\in
M\otimes S$, the \emph{support} of $f$, denoted $\supp(f)$, is the subset of $M$ corresponding to
the terms appearing in $f$.

Given a semiring homomorphism $\varphi: S \to T$ one obtains an adjoint pair
\[
S\text{-Mod} \leftrightarrows T\text{-Mod}
\]
in the standard way.  As usual, the left adjoint is denoted
$-\otimes_S T$, and it sends $S$-algebras to $T$-algebras and
coincides with the pushout of $S$-algebras along $\varphi$.

\subsection{Ideals, congruences and quotients}

Let $A$ be either an $\Fun$-algebra or a semiring.  We can regard $A$ as an $A$-module and define an
\emph{ideal} in $A$ to be a submodule of $A$.  When $A$ is a ring this agrees with the usual
definition of an ideal.

Quotients of semirings generally cannot be described by ideals, since a quotient might identify
elements $f$ and $g$ without the existence of an element $f-g$ to identify with zero.  The same
issue arises when constructing quotients of modules over semirings.  For this reason, one must
instead work with congruences to describe quotients.  Here we we present some basic results
illustrating the behavior and use of congruences.  Further details on properties of congruences can
be found in the books \cite{Golan} and \cite{Hebisch-Weinert}.

\begin{definition}
  Let $S$ and $M$ be a semiring and $S$-module respectively.  A \emph{semiring congruence} on $S$ is
  an equivalence relation $J \subset S\times S$ that is a sub-semiring, and a \emph{module
    congruence} on $M$ is an $S$-submodule $J \subset M\times M$ that is an equivalence relation.
  If the type is clear from context, we refer to such an equivalence relation simply as a
  \emph{congruence}.
\end{definition}

Given a semiring congruence $J$ on $S$, we write $S/J$ for the set of equivalence classes.  Note
that, just as for rings, finite sums and products in the category of semirings agree, and moreover,
the pushout of the two projection maps $S\leftarrow J \to S$ has $S/J$ as its underlying set, and
\[
\begin{diagram}
\node{J} \arrow{e} \arrow{s} \node{S} \arrow{s} \\
\node{S} \arrow{e} \node{S/J}
\end{diagram}
\]
is both a pushout square and a pullback square in semirings; the analogous statement for
modules also holds.  From this one sees the following (see also \cite[Theorem 7.3]{Hebisch-Weinert}):
\begin{proposition}
  Let $J$ be an equivalence relation on a semiring $S$ (or module $M$ over a semiring).  The
  semiring (or module) structure descends to the set of equivalence classes $S/J$ (or $M/J$) if and
  only if $J$ is a semiring (or module) congruence.
\end{proposition}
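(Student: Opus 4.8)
The plan is to prove both directions of the biconditional by unwinding what it means for the algebraic operations to ``descend'' to the set of equivalence classes. I treat the module case; the semiring case is identical with one more operation to check.

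First I would handle the easy direction. Suppose the $S$-module structure descends to $M/J$, i.e., the quotient set $M/J$ carries an $S$-module structure for which the projection $\pi\co M \to M/J$ is an $S$-module homomorphism. Then $J$, being the kernel pair of $\pi$ (the fiber product $M \times_{M/J} M$), is automatically an $S$-submodule of $M \times M$: it is the preimage of the diagonal submodule of $(M/J)\times(M/J)$ under the $S$-linear map $\pi \times \pi$, hence an $S$-submodule. Since $J$ was assumed to be an equivalence relation to begin with, it is a module congruence. (A clean way to say this: $J$ is the equalizer of the two $S$-linear projections composed with $\pi$, hence a submodule; and the equivalence-relation hypothesis is part of the definition of congruence, so nothing more is needed.)

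For the converse, suppose $J \subseteq M\times M$ is a module congruence, i.e., simultaneously an equivalence relation and an $S$-submodule. I would define the operations on $M/J$ in the only possible way: $[m] + [m'] \defeq [m + m']$ and $s\cdot[m] \defeq [s\cdot m]$. The content is well-definedness. If $[m_1] = [m_2]$ and $[m_1'] = [m_2']$, then $(m_1, m_2) \in J$ and $(m_1', m_2') \in J$; since $J$ is closed under the addition of $M\times M$, $(m_1 + m_1', m_2 + m_2') \in J$, so $[m_1 + m_1'] = [m_2 + m_2']$. Similarly, since $J$ is closed under the $S$-action, $(s m_1, s m_2) \in J$, giving $s[m_1] = s[m_2]$. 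Once the operations are well-defined, the module axioms for $M/J$ are inherited termwise from those of $M$ (each axiom is an equation that holds before passing to classes, hence holds after applying the surjection $\pi$), and $\pi$ is by construction an $S$-module homomorphism. The semiring case additionally requires well-definedness of multiplication, which follows because a semiring congruence is a sub-semiring of $S\times S$, hence closed under the coordinatewise product; distributivity, associativity, and the unit axioms then descend termwise as before.

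I do not expect a genuine obstacle here — this is the standard ``congruences are exactly kernels of quotient maps'' fact, and the excerpt itself flags the proof as elementary. The only point deserving care is the asymmetry in the definition of congruence: for semirings one asks that $J$ be a sub-\emph{semiring} of $S \times S$, whereas for modules one asks that $J$ be a sub-\emph{module} of $M\times M$; in both cases the crucial closure properties are exactly those needed to make the relevant operations descend, so the argument is uniform once this is observed. I would present the module case in full and remark that the semiring case is the same argument with multiplication adjoined.
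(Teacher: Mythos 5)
Your proof is correct and is exactly the standard argument the paper has in mind: the paper omits a proof here, remarking only that ``the omitted proofs in this section are all standard and/or elementary,'' and your write-up is precisely that standard verification (well-definedness of the quotient operations from closure of $J$ under the operations, plus the kernel-pair observation for the converse). Nothing further is needed.
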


\begin{definition}\label{def:congker}
  Given a morphism of semirings $\varphi : S \rightarrow R$, we define
  the \emph{kernel} congruence \[\ker\varphi := S\times_R S = \{(f,g)\in S\times
  S ~|~ \varphi(f)=\varphi(g)\}.\]
\end{definition}

Using congruences in place of ideals, the usual isomorphism theorems extend to semirings:
\begin{proposition}
\begin{enumerate}
\item Let $\varphi: S \to R$ be a homomorphism of semirings.  The image is a semiring, the
  kernel is a congruence, and $S/\ker \varphi \cong \im \varphi$.

\item Let $R$ be a semiring, $S\subset R$ a sub-semiring, $I$ a congruence on $R$, and let $S+I$
  denote the $I$-saturation of $S$ (the union of all $I$-equivalence classes that contain an element
  of $S$).  Then $S+I$ is a sub-semiring of $R$, $I$ restricts to a congruence $I'$ on $S+I$ and a
  congruence $I''$ on $S$, and there is an isomorphism $(S+I)/I' \cong S/I''$

\item For $J \subset I$ congruences on $S$, we have a congruence $I/J$ (the image of $I$ in
  $S/J \times S/J$) on $S/J$ with $(S/J) / (I/J) \cong S/I$.  This yields a bijection between
  congruences on $S/J$ and congruences on $S$ containing $J$.
\end{enumerate}
\end{proposition}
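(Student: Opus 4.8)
The plan is to establish part (1) directly from the definitions and then to deduce parts (2) and (3) by exhibiting suitable surjective homomorphisms whose kernel congruences can be computed by hand. For part (1) itself, I would first note that $\im\varphi$ is closed under $+$ and $\cdot$ and contains $0_R$ and $1_R$ because $\varphi$ is a semiring homomorphism, so it is a sub-semiring of $R$. The relation $\ker\varphi = S\times_R S$ is reflexive, symmetric, and transitive by inspection, and if $(f,f'),(g,g')\in\ker\varphi$ then applying $\varphi$ coordinatewise shows $(f+g,f'+g')$ and $(fg,f'g')$ again lie in $\ker\varphi$; hence $\ker\varphi$ is at once an equivalence relation and a sub-semiring of $S\times S$, i.e.\ a semiring congruence, so by the preceding proposition the semiring structure descends to $S/\ker\varphi$. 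Finally $[f]\mapsto\varphi(f)$ is well defined by the definition of $\ker\varphi$, is a homomorphism, is surjective onto $\im\varphi$, and is injective since $\varphi(f)=\varphi(g)$ forces $(f,g)\in\ker\varphi$; so it is the asserted isomorphism.

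For part (2), the point is that $S+I$, being the union of the $I$-equivalence classes that meet $S$, is closed under the operations: if $a\sim a'$ and $b\sim b'$ modulo $I$ with $a',b'\in S$, then $a+b\sim a'+b'$ and $ab\sim a'b'$ modulo $I$ with $a'+b',\,a'b'\in S$. Thus $S+I$ is a sub-semiring of $R$ containing $S$, and $I':=I\cap\bigl((S+I)\times(S+I)\bigr)$ and $I'':=I\cap(S\times S)$ are immediately congruences. I would then apply part (1) to the composite $S\hookrightarrow S+I\twoheadrightarrow (S+I)/I'$: it is surjective because every element of $S+I$ is $I$-equivalent to one in $S$, and its kernel congruence is exactly $I''$ since for $f,g\in S$ one has $[f]_{I'}=[g]_{I'}$ if and only if $(f,g)\in I$ if and only if $(f,g)\in I''$. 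Part (1) then gives $S/I''\cong(S+I)/I'$.

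For part (3), write $\pi\colon S\to S/J$ for the quotient map and set $I/J := (\pi\times\pi)(I)$. Because $J\subset I$ and $I$ is transitive, a pair $([f]_J,[g]_J)$ lies in $I/J$ precisely when $(f,g)\in I$; this simultaneously shows that $I/J$ is well defined as a subset of $(S/J)\times(S/J)$ and that it is an equivalence relation, and it is a sub-semiring there because $I$ is one in $S\times S$ and $\pi\times\pi$ is a homomorphism. Applying part (1) to $S\xrightarrow{\pi} S/J\twoheadrightarrow (S/J)/(I/J)$, whose kernel congruence is again exactly $I$ by the same computation, gives $(S/J)/(I/J)\cong S/I$. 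For the correspondence, I would send a congruence $K$ on $S/J$ to $(\pi\times\pi)^{-1}(K)$, which is a congruence on $S$ containing $J$ since $K$ is reflexive, and send a congruence $I\supseteq J$ on $S$ to $I/J$; that these are mutually inverse follows from $(\pi\times\pi)^{-1}(I/J)=I$ (using $J\subseteq I$) and $(\pi\times\pi)^{-1}(K)/J=K$ (using surjectivity of $\pi$), and both assignments preserve inclusions. The whole proof is a chase through the definitions; the one spot calling for a little care is part (2), where one must use that $S+I$ consists of entire $I$-classes both to see that it is a sub-semiring and to identify the kernel of $S\to(S+I)/I'$ with $I''$ --- once that is in place, the second and third isomorphism theorems reduce mechanically to the first.
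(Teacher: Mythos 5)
Your proof is correct and complete; it is the standard derivation of the isomorphism theorems from the First Isomorphism Theorem (reducing each of parts (2) and (3) to part (1) by writing down a surjection and computing its kernel congruence directly), which is precisely the "standard and/or elementary" argument the paper invokes but does not spell out. There is nothing to compare against since the paper omits this proof entirely.
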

\begin{proof}
Part (1) is Theorem 7.5 in \cite{Hebisch-Weinert}, and part (3) follows directly from Theorem 7.12
there. For part (2), first observe that two elements $s_1, s_2 \in S$ are equivalent modulo $I''$ if and only if
they are equivalent modulo $I$ as elements in $R$.  From this it follows that $S/I''$ maps
isomorphically onto its image in $R/I$.  By construction, $(S+I)/I'$ has the same image in $R/I$ as
$S/I''$, and by the same reasoning it also maps isomorphically onto its image.
\end{proof}

Since the intersection of congruences is a congruence, for a collection $\{f_\alpha,g_\alpha \in
S\}_ {\alpha\in A}$ there is a unique smallest (or finest) congruence identifying $f_\alpha$ with
$g_\alpha$ for each $\alpha$; this is the congruence generated by the pairs $(f_\alpha,g_\alpha)$.  In
the case of a semiring congruence, we denote this by $\langle f_\alpha \sim g_\alpha
\rangle_{\alpha\in A}$. More generally, for any subset $J \subset S\times S$, we denote by $\langle
J\rangle$ the semiring congruence it generates.  If $\varphi: S \to R$ is a semiring (or module)
homomorphism and $J$ is a congruence on $S$, then $\varphi(J)$ need not be a congruence on $R$
because transitivity and reflexivity can fail; we denote by $\varphi_*J$ the congruence generated by
$\varphi(J)$.

\begin{lemma}\label{lem:CongGen}
  The semiring congruence $\langle f_\alpha \sim g_\alpha \rangle_{\alpha\in A}$ consists of the
  transitive closure of the sub-semiring of $S\times S$ generated by the elements
  $(f_\alpha,g_\alpha)$, $(g_\alpha, f_\alpha)$, and the diagonal $S\subset S\times S$.  The
  analogous statement for module congruences also holds.
\end{lemma}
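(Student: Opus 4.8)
The plan is to realize the congruence generated by the pairs $(f_\alpha,g_\alpha)$ concretely as a transitive closure and then pin it down by minimality from both sides. Write $T \subseteq S\times S$ for the sub-semiring generated by the elements $(f_\alpha,g_\alpha)$, $(g_\alpha,f_\alpha)$ (for $\alpha\in A$), and the diagonal $\Delta = \{(s,s) : s\in S\}$, and let $\overline{T}$ be the transitive closure of $T$, i.e.\ the set of pairs $(a,b)$ admitting a finite chain $a = c_0, c_1, \ldots, c_n = b$ with $(c_i,c_{i+1})\in T$ for every $i$. I would first observe that $T$ is already reflexive, since it contains $\Delta$, and symmetric, since the coordinate-swap map $\sigma\colon S\times S \to S\times S$ is a semiring automorphism that permutes the chosen generating set and hence carries $T$ onto itself. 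Consequently $\overline{T}$ is reflexive (it contains $T\supseteq\Delta$), symmetric (the transitive closure of a symmetric relation is symmetric, by reversing chains), and transitive by construction, so $\overline{T}$ is an equivalence relation.

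The main obstacle is to check that $\overline{T}$ remains a sub-semiring of $S\times S$, not merely an equivalence relation, and this is exactly where the diagonal generators earn their keep. Given $(a,b),(c,d)\in\overline{T}$, pick chains $a = p_0,\ldots,p_m = b$ and $c = q_0,\ldots,q_n = d$ with consecutive pairs in $T$. Since $(p_i,p_{i+1})\in T$ and $(c,c)\in\Delta\subseteq T$ and $T$ is closed under addition, we get $(p_i + c,\, p_{i+1} + c)\in T$; concatenating over $i$ shows $(a+c,\, b+c)\in\overline{T}$. Using $(b,b)\in\Delta$ together with the chain joining $c$ to $d$ gives $(b+c,\, b+d)\in\overline{T}$, and transitivity yields $(a+c,\, b+d)\in\overline{T}$. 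The identical argument with multiplication in place of addition gives $(ac, bd)\in\overline{T}$. Hence $\overline{T}$ is a semiring congruence containing every $(f_\alpha,g_\alpha)$, so $\langle f_\alpha\sim g_\alpha\rangle_{\alpha\in A}\subseteq\overline{T}$.

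For the reverse inclusion, let $J$ be any semiring congruence with $(f_\alpha,g_\alpha)\in J$ for all $\alpha$. Symmetry of $J$ forces $(g_\alpha,f_\alpha)\in J$; reflexivity forces $\Delta\subseteq J$; being a sub-semiring, $J$ then contains the sub-semiring $T$ generated by these; and transitivity forces $\overline{T}\subseteq J$. Applying this with $J = \langle f_\alpha\sim g_\alpha\rangle_{\alpha\in A}$ gives the opposite containment, hence equality. For module congruences the proof is word for word the same, with ``sub-semiring of $S\times S$'' replaced by ``$S$-submodule of $M\times M$'': the swap map is $S$-linear and fixes the generating set, and in the stability step one additionally checks closure under scalars, which is immediate since $s\cdot(p_i,p_{i+1}) = (sp_i, sp_{i+1})\in T$ whenever $(p_i,p_{i+1})\in T$. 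Everything beyond the sub-semiring (resp.\ submodule) check is a routine unwinding of the definitions of ``congruence'' and ``generated''.
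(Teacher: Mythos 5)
Your proof is correct and follows essentially the same route as the paper: one shows that the transitive closure $\overline{T}$ of the sub-semiring (or submodule) generated by $(f_\alpha,g_\alpha)$, $(g_\alpha,f_\alpha)$, and the diagonal is itself a congruence, and then appeals to minimality on both sides. The only cosmetic difference is in verifying that $\overline{T}$ is closed under the operations: the paper pads the two chains to equal length and combines them termwise, whereas you freeze one argument with a diagonal pair, vary the other, and compose by transitivity --- both tricks rely on the diagonal in precisely the same way.
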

\begin{proof}
  The sub-semiring generated clearly gives a binary relation that is symmetric and reflexive, so it
  suffices to check that if $R\subset S\times S$ is a sub-semiring, then the transitive closure $R'$
  is also a sub-semiring.  Let $x_1, \ldots, x_n$ and $y_1, \ldots, y_k$ be sequences of elements in
  $S$ such that each consecutive pair $(x_i,x_{i+1})$ and $(y_i, y_{i+1})$ is in $R$.  Thus $(x_1,
  x_n)$ and $(y_1, y_k)$ are in $R'$, and we must show that the product and sum of these are each in
  $R'$.  We may assume $k\leq n$, and by padding with $n-k$ copies of $(y_k,y_k)$, which is in $R'$ since $R'$ contains the diagonal, we can assume that
  $k=n$.  By adding or multiplying the two sequences term by term we obtain the result.
\end{proof}

An ideal $I$ in a semiring $S$ induces a congruence $\langle f \sim 0_S\rangle_{f\in I}$ on $S$.
 
\begin{proposition}\label{prop:idealcong}
  If $S$ is a ring then the above map from ideals to congruences is a bijection with inverse given
  by sending a congruence $\{f_i \sim g_i\}_{i\in \Lambda}$ to the ideal $\{f_i-g_i\}_{i\in \Lambda}$.
\end{proposition}
\begin{proof}
Congruences on $S$ are in bijection with quotients of $S$ as a semiring, and ideals in $S$ are of
course in bijection with quotients of $S$ as a ring, so it suffices to show that every semiring
quotient of $S$ is in fact a ring.  This is immediate since a homomorphism of semirings sends
additively invertible elements to additively invertible elements.
\end{proof}

However, if $S$ is a semiring that is not a ring then there can me multiple distinct ideals inducing
the same congruence.  For example, in the semiring $\N$ the maximal ideal $\N\smallsetminus \{1\}$
and the unit ideal $\N$ both induce the same congruence.  Indeed, the former congruence contains $(1,1) + (0,2) = (1,3)$ and $(3,0)$ so by transitivity it also contains $(1,0)$.  

More significantly, many congruences on semirings are not induced by an ideal as above.  For
instance, the tropical numbers $\T$ form a semifield and hence the only proper ideal is the trivial
one; however, there is a non-trivial congruence on $\T$ with quotient the Boolean semiring
$\mathbb{B}$.  Another example is provide by the diagonal $\A^1$ inside $\A^2 = \spec \T[x,y]$; this
congruence is generated by the relation $x\sim y$.

\subsection{Valuations}\label{sec:valuations}

The term ``non-archimedean valuation'' on a ring $R$ usually means a homomorphism of multiplicative monoids
$\nu: R\to \T$ satisfying $\nu(0_R) = -\infty$ and
the subadditivity condition $\nu(a+b) \leq \nu(a)+\nu(b)$ for all $a,b \in R$.  (Many
authors use the opposite sign convention, and some would call this a ``semi-valuation'' unless the
non-degeneracy condition $\nu^{-1}(-\infty) = 0$ holds.)  The subadditivity condition
appears semi-algebraic but, as observed in \cite{Manon}, it can be reformulated as an algebraic
condition:
\[
\nu(a+b)+\nu(a) + \nu(b) = \nu(a) + \nu(b).
\]
We use this observation in \S\ref{sec:moduli} when constructing the moduli space of valuations on a
ring.  

It is useful---for example, when studying families of tropical varieties---to allow a more
general codomain, so throughout this paper the term ``valuation'' shall refer to the following
generalization.  Note that, when passing from $\T$ to an arbitrary idempotent semiring, the
total order is replaced by a partial order (cf., \S\ref{section:semirings}).

\begin{definition}\label{def:valuation}
  A \emph{valuation} on a ring $R$ is an idempotent semiring $S$ (called the \emph{semiring of
    values}), and a map $\nu: R \to S$ satisfying, 
\begin{enumerate}
\item (unit) $\nu(0_R) = 0_S$, and $\nu(1_R)=1_S$,
\item (sign) $\nu(-1_R)=1_S$,
\item (multiplicativity) $\nu(ab) = \nu(a)\nu(b)$, 
\item (subadditivity) $\nu(a + b) + \nu(a) + \nu(b) = \nu(a) + \nu(b)$.
\end{enumerate}
for any $a,b\in R$.  A valuation $\nu$ is said to be \emph{non-degenerate} if $\nu(a)=0_S$ implies $a=0_R$. 
\end{definition}

For $S = \T$ this coincides with the usual notion of a non-archimedean valuation described above.
When $S$ is totally ordered the resulting valuations coincide with Krull valuations and considering
these leads to Huber's ``adic spaces'' approach to non-archimedean analytic geometry \cite{Huber}.  
Note that any valuation on a field is automatically non-degenerate.

\begin{remark}
  If $S$ is totally ordered then the sign condition $\nu(-1_R) = 1_S$ is implied by the multiplicativity
  and the unit condition $\nu(1_R)=1_S$ in Definition \ref{def:valuation}.  Indeed,
  $\nu(-1_R)^2=\nu(1_R)=1_S$ and the total ordering implies that square roots, when they exist, are
  unique.  For semirings that are not totally ordered this need not be the case.  We believe that
  the sign condition is fundamental to our generalization of valuations, as it is necessary for the
  functoriality of tropicalization (Proposition \ref{prop:trop-functoriality}) --- see Remark
  \ref{rem:sign-required}.  Note that the sign condition is also satisfied by the valuations
  considered in \cite{MacPherson}.
\end{remark}

\begin{lemma}\label{lem:val} Let $\nu: R \to S$ be a valuation and $a,b\in R$.
\begin{enumerate}
\item $\nu(a + b) + \nu(a) = \nu(a+b) + \nu(a) + \nu(b)$.
\end{enumerate}
Assume now that the partial order on $S$ is a total order.
\begin{itemize}
\item[(2)] If $\nu(a) \ne \nu(b)$ then $\nu(a+b) = \nu(a) + \nu(b)$.
\item[(3)] The image of $\nu$ is a subsemiring of $S$,
  and $R \twoheadrightarrow \im\nu$ is a valuation.
\end{itemize}
\end{lemma}

\begin{proof}
For part (1), let $x= a+b$ and $y=-a$.  The subadditivity equation, applied to $x$ and $y$,
becomes
\[
\nu(b) + \nu(a+b) + \nu(-a) = \nu(a+b) + \nu(-a),
\]
and using the sign condition $\nu(-1_R)=1_S$, this becomes the desired equation.

For (2), assume without loss of generality that $\nu(a) < \nu(b)$; we then need to show that $\nu(a+b) = \nu(b)$.  As above, we apply the subadditivity property with $x = a+b$ and $y=-a$, which here yields
\[
\nu(b) + \nu(a+b) = \nu(a) + \nu(a+b).
\]
Adding $\nu(a)$ to both sides then using idempotency, subadditivity, and the condition $\nu(a)+\nu(b) = \nu(b)$, we deduce that $\nu(b) = \nu(a)+\nu(a+b)$.  The result then follows from the total order hypothesis.

Part (3) follows immediately from part (2).
\end{proof}

A \emph{valued ring} is a triple $(R, S, \nu: R \to S)$ where $R$ is a ring and $\nu$ is a
valuation.  Valued rings form a category in which a morphism $\varphi: (R,S,\nu) \to (R',S', \nu')$
consists of a ring homomorphism $\varphi_1: R \to R'$ and a semiring homomorphism $\varphi_2: S \to
S'$ such that $\nu'\circ \varphi_1 = \varphi_2 \circ \nu$.  Note that the composition of a valuation
$\nu: R\to S$ with a semiring homomorphism $S\to S'$ is again a valuation.

As an illustration of the utility of considering the general class of valuations defined above, we
show that, for a fixed ring $R$, there exists a \emph{universal valuation}
$\nu^R_{univ}: R \to S^R_{univ}$ on $R$ from which any other valuation can be obtained by
composition with a unique semiring homomorphism.  This will be used to show that, as one varies the
valuation on $R$, the set of all tropicalizations of a fixed subscheme forms an algebraic family
over $\spec S^R_{univ}$ (Theorem \ref{thm:ValFamily} part (1)).  Recall that
$\B=\{0,-\infty\} \subset \T$ is the boolean semiring and consider the polynomial $\B$-algebra
$\B[x_a \:|\: a\in R]$ with one generator $x_a$ for each element $a\in R$.  The universal semiring
of values $S^R_{univ}$ is the quotient of $\B[x_a \:|\: a\in R]$ by the congruence generated by the
relations
\begin{enumerate}
\item $x_0 \sim 0_S$ and $x_1 \sim x_{-1} \sim 1_S$,
\item $x_a x_b \sim x_{ab}$ for any $a,b \in R$,
\item $x_{a+b} + x_a + x_b \sim x_a + x_b$ for any $a,b\in R$.
\end{enumerate}
The universal valuation $\nu^R_{univ}$ sends $a$ to $x_a$.

\begin{proposition}\label{val-univ-property}
  Given a valuation $\nu: R\to T$, there exists a unique homomorphism $\phi: S^R_{univ} \to T$ such
  that $\phi\circ \nu^R_{univ} = \nu$.  Hence valuations with semiring of values $T$ are in
  bijection with homomorphisms $S^R_{univ} \to T$
\end{proposition}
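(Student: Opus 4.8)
The plan is to observe that $S^R_{univ}$ has, by construction, been built to enforce exactly the three axioms of Definition \ref{def:valuation} and nothing more, so the statement becomes a formal consequence of the universal property of the polynomial $\B$-algebra together with that of congruence quotients.

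First I would note that $T$, being the semiring of values of a valuation, is an idempotent semiring, hence a $\B$-algebra in a unique way since $\B$ is initial among idempotent semirings. Since $\B[x_a \mid a\in R]$ is the free $\B$-algebra on the set $R$ (with generator $x_a$ for each $a$), the assignment $x_a \mapsto \nu(a)$ extends uniquely to a $\B$-algebra homomorphism $\psi\co \B[x_a\mid a\in R] \to T$.

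Second, I would check that the kernel congruence $\ker\psi$ contains each of the three families of generating pairs of the defining congruence of $S^R_{univ}$, equivalently that $\psi$ takes the same value on the two sides of each relation. Relation (1) maps to $\nu(0_R)=0_T$ and $\nu(1_R)=\nu(-1_R)=1_T$, which is precisely the unit axiom (here the inclusion of $x_{-1}$ is what forces $\nu(-1_R)=1_T$, cf.\ the Remark preceding Lemma \ref{lem:val}); relation (2) maps to $\nu(a)\nu(b)=\nu(ab)$, multiplicativity; relation (3) maps to $\nu(a+b)+\nu(a)+\nu(b)=\nu(a)+\nu(b)$, subadditivity. Since $\ker\psi$ is a congruence, it contains the congruence generated by these pairs, so by the mapping-out property of quotients by congruences (from \S on ideals, congruences and quotients) $\psi$ factors uniquely through $S^R_{univ}$, giving a homomorphism $\phi\co S^R_{univ}\to T$ with $\phi\circ\nu^R_{univ} = \phi(x_{(-)}) = \nu$.

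For uniqueness, $S^R_{univ}$ is generated as a $\B$-algebra by the classes of the $x_a = \nu^R_{univ}(a)$, so any $\phi'$ with $\phi'\circ\nu^R_{univ}=\nu$ agrees with $\phi$ on a generating set, hence everywhere. Finally, the displayed bijection follows: $\phi \mapsto \phi\circ\nu^R_{univ}$ sends $\Hom(S^R_{univ},T)$ into the set of valuations $R\to T$ (a composite of a valuation with a semiring homomorphism is again a valuation, as noted after the definition of valued rings), and the first part of the argument exhibits the inverse $\nu\mapsto\phi$. I do not expect a real obstacle here; the only care needed is the routine bookkeeping that the three imposed relations correspond termwise to the three axioms, and that quotienting by a congruence has the expected universal property, both of which are already in hand.
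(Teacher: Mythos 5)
Your proof is correct and follows the same route as the paper's: define the map on generators by $x_a \mapsto \nu(a)$, observe that the three generating relations of $S^R_{univ}$ correspond exactly to the three axioms of a valuation so the map descends to the quotient, and note that uniqueness follows because the $x_a$ generate. You have merely unpacked the paper's one-line argument into the universal property of the free $\B$-algebra plus the mapping-out property of congruence quotients, which is a faithful expansion rather than a different approach.
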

\begin{proof}
  The homomorphism $\phi$ is defined by sending each generator $x_a$ to $\nu(a)$.  Since the
  relations in $S^R_{univ}$ correspond exactly to the relations satisfied by a valuation, $\phi$ is
  well-defined.  Uniqueness is immediate.
\end{proof}

\section{$\Fun$-schemes and semiring schemes}\label{sec:scheme-theory}

\subsection{Construction of $\Fun$-schemes and semiring schemes}

The papers \cite{Toen-Vaquie}, \cite{Lorscheid-blueprints1}, and \cite{Durov} each construct
categories of schemes over semirings and some notion of $\Fun$.  For the purposes of the present
paper we do not require the full generality of their constructions, so we present below a
streamlined construction that follows the classical construction of schemes and yields a category
that admits a full embedding into each of their categories.

\begin{remark}
  Over a semiring, the category of schemes described here is equivalent to that of To\"en-Vaqui\'e,
  and it is a full subcategory of Durov's generalized schemes. There is a functor from it to
  Lorscheid's category of blue schemes, and this functor is faithful but not full. See \cite{F1land}
  for a comparison of these three threories over each of their notions of $\Fun$.
\end{remark}

The construction of schemes modelled on $\Fun$-algebras or semirings proceeds exactly as in the
classical setting of rings.  Let $A$ be a $Q$-algebra, where $Q$ is either a semiring or an
$\Fun$-algebra.  A proper ideal in $A$ is \emph{prime} if its complement is closed under
multiplication.  Given a prime ideal $\mathfrak{p} \subset A$, one can form the localization
$A_{\mathfrak{p}}$ via equivalence classes of fractions in the usual way.  As a space, the prime
spectrum $|\spec A|$ is the set of prime ideals in $A$ equipped with the Zariski topology in which
the open sets are the collections of primes not containing a given ideal (a basis is given by sets
of the form $D(f) = \{\mathfrak{p} \:\: | \:\: f\notin \mathfrak{p}\}$ for $f\in A$).  Any
$A$-module (or algebra) $M$ determines a sheaf $\widetilde{M}$ of $Q$-modules (or algebras) that
sends a principal open set $D(f)$ to the localization $M_f = A_f \otimes M$ in which $f$ is
inverted.  In particular, $A$ itself gives a sheaf of $Q$-algebras, and this is the structure sheaf
$\mathscr{O}_A$.

An \emph{affine scheme} (over $Q$) is a pair $(X,\mathscr{O})$ consisting of a topological space $X$
and a sheaf of $Q$-algebras that is isomorphic to a pair of the form $(|\spec A|, \mathscr{O}_A)$.
A general $Q$-scheme is a pair that is locally affine.  A morphism of schemes is a morphism of pairs
that is given in suitable affine patches by a homomorphism of $Q$-algebras.  As explained in
\cite[6.5.2]{Durov}, for rings this coincides with the usual construction in terms of locally ringed
spaces.  The category of affine $Q$-schemes is equivalent to the opposite of the category of
$Q$-algebras.

\begin{proposition}\label{prop:over-cat-equiv}
  Given a $Q$-algebra $A$, the category of $A$-schemes is canonically equivalent to the category of
  $Q$-schemes over $\spec A$.
\end{proposition}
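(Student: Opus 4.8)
The plan is to prove Proposition \ref{prop:over-cat-equiv} by reducing everything to the affine case, where the statement is the adjunction between base change and restriction of scalars, and then globalizing via gluing. Throughout, $Q$ is a fixed base (semiring or $\Fun$-algebra) and $A$ is a $Q$-algebra, and I want to exhibit mutually inverse (up to natural isomorphism) functors between $A$-schemes and the slice category $(Q\text{-Sch})/\spec A$.

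First I would construct the functor from $A$-schemes to $Q$-schemes over $\spec A$. An $A$-scheme $X$ comes with a structure sheaf of $A$-algebras; forgetting along $Q \to A$ gives a sheaf of $Q$-algebras, hence a $Q$-scheme, and the $Q$-algebra structure map $Q \to A \to \mathscr{O}_X$ is exactly the data of a morphism $X \to \spec A$ of $Q$-schemes (on an affine patch $\spec B$ this is the unique $Q$-algebra map $A \to B$ making $B$ an $A$-algebra, and these glue because the $A$-algebra structure was globally defined). In the other direction, given a $Q$-scheme $Y$ with a structure morphism $\pi: Y \to \spec A$, I would equip $\mathscr{O}_Y$ with an $A$-algebra structure: over an affine open $\spec B \subset Y$ mapping into a principal open $D(f) \subset \spec A$, the morphism corresponds to a $Q$-algebra homomorphism $A_f \to B$, and these are compatible on overlaps (since $\pi$ is a morphism of schemes), so they assemble into a map of sheaves $\underline{A} \to \mathscr{O}_Y$ — more precisely a map $\pi^{-1}\mathscr{O}_{\spec A} \to \mathscr{O}_Y$ — making $Y$ an $A$-scheme. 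One then checks these two constructions are inverse to one another and natural, which on affine patches is just the statement (used already in the excerpt) that affine $Q$-schemes are the opposite of $Q$-algebras together with the evident fact that $A$-algebras are the same as $Q$-algebras under $A$.

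The key steps, in order: (i) record the affine case, namely that for a $Q$-algebra $B$, giving $B$ the structure of an $A$-algebra is the same as giving a $Q$-algebra map $A \to B$, i.e. a $Q$-scheme morphism $\spec B \to \spec A$ — this is immediate from the equivalence of affine $Q$-schemes with $Q$-algebras$^{\mathrm{op}}$; (ii) globalize the forgetful direction, observing that a sheaf of $A$-algebras on $X$ has an underlying sheaf of $Q$-algebras and that the canonical $Q \to A$ gives the structure morphism to $\spec A$; (iii) globalize the reverse direction by gluing the affine-local $A$-algebra structures coming from $\pi$, where the only thing to verify is that the maps $A_f \to \mathscr{O}_Y(\spec B)$ obtained on different patches agree on intersections — this follows because $\pi$ is a well-defined morphism of locally (semi)ringed-type spaces, so the induced maps on sections are compatible with restriction; (iv) check that morphisms correspond, i.e. a $Q$-scheme morphism over $\spec A$ is the same as a morphism respecting the $A$-algebra structure sheaves, again affine-locally a tautology; (v) conclude the two functors are quasi-inverse and the equivalence is canonical.

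I expect the main obstacle to be purely bookkeeping rather than conceptual: making precise the gluing in step (iii) in the semiring/$\Fun$ setting, since one cannot quite invoke the standard ``locally ringed space'' machinery verbatim (the paper's schemes are defined as locally affine pairs with morphisms that are affine-locally $Q$-algebra homomorphisms, per the construction recalled just before the proposition). The cleanest way around this is to work entirely with affine covers and the basis of principal opens, never leaving the comfort of the equivalence ``affine $Q$-schemes $=$ ($Q$-algebras)$^{\mathrm{op}}$'', and to invoke \cite[6.5.2]{Durov} only to know the morphism notion behaves as expected over rings. Everything else — functoriality, naturality, that the round-trips are identities — is routine unwinding of definitions.
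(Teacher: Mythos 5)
The paper states this proposition without proof, treating it as a standard consequence of the affine equivalence ``affine $Q$-schemes $=$ ($Q$-algebras)$^{\mathrm{op}}$'' together with gluing. Your proposal fills in exactly that argument: the affine case is the tautology that an $A$-algebra structure on $B$ is a $Q$-algebra map $A\to B$, i.e.\ a $Q$-scheme morphism $\spec B\to\spec A$; you then globalize in both directions by affine covers and check compatibility on overlaps. This is the correct and essentially unique route, and your explicit acknowledgement that one should stay at the level of affine covers and principal opens (rather than invoking locally-ringed-space machinery, which is not literally available in the $\Fun$/semiring setting as the paper defines it) is the right precaution. Nothing further is required.

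One small remark: in step (iii) you phrase the gluing as assembling maps $A_f\to\mathscr{O}_Y(\spec B)$ into a map $\pi^{-1}\mathscr{O}_{\spec A}\to\mathscr{O}_Y$. Since the paper defines morphisms of $Q$-schemes by the requirement that they be affine-locally induced by $Q$-algebra homomorphisms (rather than by a sheaf map subject to a stalkwise ``local'' condition), the cleanest formulation avoids $\pi^{-1}$ entirely: record, for each pair (affine open $\spec B\subset Y$, principal open $D(f)\subset\spec A$ with $\pi(\spec B)\subset D(f)$), the resulting $Q$-algebra map $A_f\to B$, and observe directly that the composites $A\to A_f\to B$ agree on overlaps because $\pi$ is a well-defined morphism and restriction to a smaller principal open $D(fg)\subset D(f)$ factors the map through $A_{fg}$. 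Then the $A$-algebra structure on each $B$ is the composite $A\to A_f\to B$ and compatibility is the cocycle-free observation that this composite does not depend on the choice of $f$. This is exactly what you meant, just stated without appeal to inverse-image sheaves.
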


A condition that will be fundamental later in our tropicalization construction is the following.  An $\Fun$-scheme is \emph{locally integral} if it admits an open affine cover by the spectra of integral $\Fun$-algebras (recall Definition \ref{def:intmon}).

\begin{proposition}\label{prop:locint}
  Any locally integral $\Fun$-scheme has a topological basis given by the spectra of integral $\Fun$-algebras.
\end{proposition}

\begin{proof}
This follows from the fact that any localization of an integral $\Fun$-algebra is integral.
\end{proof}

\begin{remark}
  A related construction is that of a Kato fan, which first appeared in Kato's seminal work on log
  geometry \cite{Kato} and was later explored in relation to tropicalization by Ulirsch
  \cite{Ulirsch-trop}.  An affine Kato fan consists of the prime spectrum of a monoid $M$, defined
  as above, but equipped with the structure sheaf associating to each basic open subset $D(f)$ the
  quotient of $M_f$ by its subgroup of units.  Thus in the case of a monoid-with-zero, an affine
  Kato fan is homeomorphic to an affine $\Fun$-scheme.  An arbitrary Kato fan is obtained by gluing
  affine Kato fans.  The advantage of these modified structure sheaves in Kato's construction is
  that they allow more flexible gluing.  For instance, associated to a toroidal embedding is a Kato
  fan homeomorphic to the set of generic points of the toroidal strata together with their
  specialization relations \cite[Remark 4.16(ii)]{Ulirsch-trop}, whereas $\Fun$-schemes more closely
  resemble toric varieties (cf., \S\ref{sec:toric}) since inverting all non-zero elements of an
  integral monoid yields the coordinate algebra of a (possibly infinite-dimensional) algebraic
  torus.  However, in Kato's framework a torus has no non-trivial regular functions; since the
  tropical geometry we generalize in this paper is based on subschemes of tori, this appears to be
  an insurmountable obstruction to using Kato fans for our purposes.  A further variant of these
  constructions is that of Artin fans, developed in \cite{ACMW,Abramovich-Wise} and extending ideas
  in \cite{Olsson}; these are algebraic stacks \'etale locally isomorphic to the stack quotient of a
  toric variety by its big torus.  Thus an Artin fan has the flexibility of Kato fans yet retains in
  its stack structure some of the crucial information that Kato fans discard.    Many of the
  constructions developed in this paper should extend from the context of $\Fun$-schemes and
  the Zariski topology to the context of Artin fans and an appropriate \'etale site.
\end{remark}

\subsection{Base change functors}

The scalar extension and restriction functors of \S\ref{sec:scalar-ext} admit globalizations that we
briefly describe here.

Using the fact that $\Fun$-Mod and $S$-Mod (for $S$ a semiring) are cocomplete, all fiber products
exist in the categories of $\Fun$-schemes and $S$-schemes and they are constructed in the usual way.
In particular, if $T$ is an $S$-algebra and $X$ is an $S$-scheme, then $X_T \defeq \spec T
\times_{\spec S} X$ exists and by Proposition \ref{prop:over-cat-equiv} it can be regarded as a
$T$-scheme.  Thus $\spec T \times_{\spec S} -$ defines a base change functor from $S$-schemes
to $T$-schemes, and this is the right adjoint of the forgetful functor (defined using Proposition
\ref{prop:over-cat-equiv}) that regards a $T$-scheme as an $S$-scheme.

For $R$ a ring or semiring, the scalar extension functor $- \otimes R$ clearly sends localizations
of $\Fun$-algebras to localizations of $R$-algebras, so it globalizes to give a base change functor
from $\Fun$-schemes to $R$-schemes.  Given an $\Fun$-scheme $X$, we write $X_R$ for the base change
of $X$ to $R$-schemes. This base change functor is right adjoint to the forgetful functor from
$R$-schemes to $\Fun$-schemes that globalizes the corresponding forgetful functor from $R$-Mod to
$\Fun$-Mod.  Given an $\Fun$-scheme $X$, the set of $R$-points of $X$ and of $X_R$ coincide and we denote this set by $X(R)$.

\subsection{Closed subschemes}\label{sec:closed}

At a formal level, the classical theory of schemes and the extended theory of semiring schemes are
nearly identical when considering open subschemes and gluing.  However, novel features appear when
considering closed subschemes; this is essentially because the bijection between ideals and
congruences (Proposition \ref{prop:idealcong}) fails in general for semirings. 

Over a ring, a closed immersion is a morphism $\Phi : Y \to X$ such that $\Phi(Y)$ is topologically a closed subspace of $X$, the induced map $Y \to \Phi(Y)$ is a homeomorphism, and the sheaf map $\Phi^\sharp: \mathscr{O}_X \to \Phi_*\mathscr{O}_Y$ is surjective.  These conditions on $\Phi$ are equivalent to requiring that $\Phi$ be an affine morphism with $\Phi^\sharp$ surjective.  A closed subscheme is then an equivalence class of closed immersions, where $\Phi : Y \to X$ and $\Phi' : Y' \to X$ are equivalent if there is an isomorphism $Y \cong Y'$ commuting with these morphisms.  There is a bijection between closed subschemes of $X$ and quasi-coherent ideal sheaves on $X$.  

Over a semiring, the equivalence between the above two characterizations of a closed immersion
breaks down; see Remark \ref{rem:closedpts} below.  The prevailing attitude (e.g., in \cite{Durov}),
and the choice that we follow here, is to adopt the second perspective: for a scheme $X$ over a
semiring, a \emph{closed immersion} is an affine morphism $\Phi: Y\to X$ such that
$\Phi^\sharp: \mathscr{O}_X \to \Phi_*\mathscr{O}_Y$ is surjective.  As before, a \emph{closed
  subscheme} of $X$ is an equivalence class of closed immersions into $X$.  Closed subschemes of
$\Fun$-schemes are defined in the same way.

A \emph{congruence sheaf} $\mathscr{J}$ on $X$ is a subsheaf of $\mathscr{O}_X\times\mathscr{O}_X$ such
that $\mathscr{J}(U)$ is a congruence on $\mathscr{O}_X(U)$ for each open $U \subset X$.  A congruence
sheaf is \emph{quasi-coherent} if it is quasi-coherent when regarded as a sub-$\mathscr{O}_X$-module
of $\mathscr{O}_X \times \mathscr{O}_X$.

\begin{proposition}
Let $S$ be a semiring.
\begin{enumerate}
\item Let $X = \spec A$ be an affine $S$-scheme.  Taking global sections induces a bijection between
quasi-coherent congruence sheaves on $\spec A$ and congruences on $A$.
\item For $X$ an arbitrary $S$-scheme, there is a bijection between closed subschemes of $X$ and
  quasi-coherent congruence sheaves on $X$.
\end{enumerate}
\end{proposition}
\begin{proof}
Part (1) follows directly from \cite[Corollary 5.5]{Hartshorne}, whose proof is unaffected by the
generalization from rings to semirings.

For (2), given a closed immersion $\Phi: Y \to X$, the congruence kernel $\ker\Phi^\sharp$ is a
quasi-coherent congruence sheaf.  Conversely, a quasi-coherent congruence sheaf on $X$
determines determines a closed subscheme of each affine open subscheme $U$, and the quasi-coherence
condition together with part (1) ensure that these glue together to form a well-defined closed
subscheme of $X$.
\end{proof}

\begin{remark}\label{rem:closedpts}
Curiously, morphisms that are scheme-theoretic closed immersions defined in this way are often not
closed embeddings at the level of topological spaces.  For instance, a point $\Phi: \spec \T
\rightarrow \A^n$ corresponding to a $\T$-algebra morphism $\varphi: \T[x_1,\ldots,x_n]
\twoheadrightarrow \T$ sending each $x_i$ to some finite value $\varphi(x_i)\in \R$ is a closed
immersion, but the image of this map is not Zariski closed---in fact, it is a dense point!  Indeed,
$\varphi^{-1}(-\infty) = \{-\infty\}$, which is contained in all primes, so every point of
$| \spec \T[x_1,\ldots,x_n] |$ is in the closure of the image of the point $|\spec \T|$.
\end{remark}

The congruence sheaf generated by a family of quasi-coherent congruence sheaves is again a quasi-coherent congruence sheaf, and this defines the intersection of the corresponding closed subschemes.

\begin{remark}
  One can view the prime spectrum and its Zariski topology as a technical scaffolding whose purpose
  is to define the functor of points, which is then regarded as the fundamental geometric object as
  in \cite{Toen-Vaquie}.
  For instance, as we see in the following example, the $\T$-points of a tropical variety more
  closely reflect familiar geometry than its prime spectrum.  Moreover, there is a natural topology on the $\T$-points of a $\T$-scheme such that closed subschemes, as defined above, induce closed subsets of the $\T$-points; see \cite[\S3.4]{GG2} where this notion is introduced and used to show that the Berkovich analytification of a scheme is homeomorphic to the $\T$-points of a certain tropicalization of the scheme.
\end{remark}

\subsection{Example: the affine tropical line}

The set of $\T$-points of the affine line $\A^1 = \spec \T[x]$ is clearly $\T$ itself, but the
ideal-theoretic kernels of the corresponding homomorphisms $\T[x] \to \T$ are all trivial except for
the point $x \mapsto -\infty$ for which the ideal is maximal.  On the other hand, one can of course
distinguish all these points using the congruence-theoretic kernel, by the First Isomorphism
Theorem.

The semiring $\T[x]$ has a rather intricate structure;  however, it admits a quotient with the same set of
$\T$-points that behaves more like univariate polynomials over an algebraically closed field: 
\[
\overline{\mathbb{T}[x]} := \T[x]/\sim, \text{ where }f\sim g \text{ if } f(t) = g(t) \text{ for all } t\in \T.
\]
Polynomials in this quotient split uniquely into linear factors.  More specifically, if 
\[
b_t := 0+t^{-1}x \in \overline{\mathbb{T}[x]} \text{ for } t\in \T^\times=\R \text{ and } b_{-\infty}
:= x \in \overline{\mathbb{T}[x]},
\]
then any element of $\overline{\mathbb{T}[x]}$ can be written uniquely as $c\prod b_{t_i}^{d_i}$ for
$c,t_i\in \T$.  Nonetheless, the prime spectrum of $\overline{\mathbb{T}[x]}$ is larger than one might
guess based on analogy with the case of algebraically closed fields.  For any subset $K \subset \T$
we define the ideal $I_K := (\{b_t~|~t\in K\}) \subset \overline{\mathbb{T}[x]}$.

\begin{proposition}
  If $K\subset\T$ is an interval (not necessarily closed or open) then $I_K\setminus\{-\infty\}$ is
  the set of functions that have a bend in $K$.  As a set, $|\spec \overline{\mathbb{T}[x]}| =
  \{I_K~|~K\subset \T\text{ is an interval }\}$.  The finitely generated primes correspond to closed
  intervals and the principal primes to points of $\T$.
\end{proposition}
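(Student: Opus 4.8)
The plan is to exploit the unique factorization $f=c\prod_i b_{t_i}^{d_i}$ of a nonzero $f\in\overline{\mathbb{T}[x]}$ recorded above. Write $B(f)\subseteq\T$ for the multiset of \emph{bends} $\{t_i\}$ (so $b_{-\infty}=x$ contributes $-\infty$); then ``$f$ has a bend in $K$'' means $B(f)\cap K\neq\varnothing$, one has $b_t\mid f\iff t\in B(f)$ and $B(fg)=B(f)\cup B(g)$, and $(b_t)=\{g\,b_t:g\in\overline{\mathbb{T}[x]}\}$ by distributivity. The first sentence has one easy inclusion: if $f$ bends at $t\in K$ then $b_t\mid f$, so $f\in(b_t)\subseteq I_K$. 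For the other inclusion, since $I_K$ is by definition the smallest ideal containing $\{b_t:t\in K\}$, it is enough to show that $J_K:=\{f:B(f)\cap K\neq\varnothing\}\cup\{-\infty\}$ is an ideal; closure under multiplication by $\overline{\mathbb{T}[x]}$ is immediate from $B(fg)=B(f)\cup B(g)$, so the whole proposition comes down to one point, which I expect to be the main obstacle:

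\emph{Key claim: if $f,g\neq -\infty$ each have a bend in the interval $K$, then $f\oplus g$ has a bend in $K$.}

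I would prove this by contradiction using convexity of tropical polynomial functions. Choose bends $t_1\in B(f)\cap K$ and $t_2\in B(g)\cap K$ with $t_1\le t_2$, so $[t_1,t_2]\subseteq K$, and suppose $h:=f\oplus g$ has no bend in $[t_1,t_2]$. As $B(h)$ is finite, $h$ agrees with an affine function $\ell$ on an open interval $U\supseteq[t_1,t_2]$. Take first $t_1\in\R$. Then $f,g\le\ell$ on $U$ with $\max(f,g)=\ell$ pointwise, and $A:=\{f=\ell\}\cap U$, $B:=\{g=\ell\}\cap U$ are closed subintervals of $U$ covering it (the complements $\{f<\ell\}$, $\{g<\ell\}$ are intervals since $\ell-f,\ell-g$ are nonnegative concave on $U$, and a short concavity argument shows such a zero set has no interior gap). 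If $A=U$ then $f$ is affine on the open set $U\ni t_1$, contradicting $t_1\in B(f)$; symmetrically if $B=U$. Otherwise one of $A,B$---say $B$---is a proper closed subinterval of $U$ with an endpoint $\beta$ interior to $U$: there $g(\beta)=\ell(\beta)$, $g$ agrees with $\ell$ on one side of $\beta$ and $g<\ell$ on the other, and the secant slopes of $g$ at $\beta$ taken from the $g<\ell$ side are all strictly on the wrong side of the slope of $\ell$ (larger, if that side is the left; smaller, if the right), whereas the one-sided slope of $g$ on the $g=\ell$ side equals that slope---contradicting convexity of $g$. Hence $h$ bends in $[t_1,t_2]\subseteq K$. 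The remaining cases are easy: $t_1=t_2=-\infty$ gives $x\mid f$ and $x\mid g$, hence $x\mid f\oplus g$; and if $t_1=-\infty<t_2$, then since $x\nmid h$ the function $h$ is constant near $-\infty$, whence $g\le h$ forces $g$ to be constant up to (and slightly beyond) $t_2$, contradicting $t_2\in B(g)$.

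Granting the Key claim, $J_K$ is an ideal, so $I_K=J_K$ and the first sentence follows. Then $\overline{\mathbb{T}[x]}\setminus I_K=\{f\neq-\infty:B(f)\cap K=\varnothing\}$ is multiplicatively closed (again by $B(fg)=B(f)\cup B(g)$) and contains the unit $1_\T=0$, so $I_K$ is a proper prime ideal, and $K\mapsto I_K$ is injective because $b_t\in I_K\iff t\in K$ (with $K=\varnothing$ giving the zero ideal, which is prime since $\overline{\mathbb{T}[x]}$ is a domain). For the converse I would take an arbitrary prime $\mathfrak p$, set $K_{\mathfrak p}:=\{t\in\T:b_t\in\mathfrak p\}$, and use the identities
\[
b_{t_1}\oplus (t{-}t_1)\cdot b_{t_2}=(t{-}t_1)\cdot b_t\ \ (t_1{<}t{<}t_2\text{ in }\R),\qquad x\oplus c\cdot b_{t_2}=c\cdot b_c\ \ ({-}\infty{<}c{<}t_2),
\]
verified by comparing piecewise-linear graphs, to deduce that whenever $t$ lies between two elements of $K_{\mathfrak p}$ some unit multiple of $b_t$ lies in $\mathfrak p$, hence $b_t\in\mathfrak p$; so $K_{\mathfrak p}$ is an interval. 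Unique factorization (a unit is never in $\mathfrak p$) together with primality then give, for nonzero $f$, that $f\in\mathfrak p$ iff some bend of $f$ lies in $K_{\mathfrak p}$, i.e.\ $\mathfrak p=J_{K_{\mathfrak p}}=I_{K_{\mathfrak p}}$. This identifies $|\spec\overline{\mathbb{T}[x]}|$ with $\{I_K:K\subseteq\T\text{ an interval}\}$.

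For the last sentence, the same identities give $b_c=(a{-}c)\cdot b_a\oplus b_b$ for all $a\le c\le b$ (using $x$ in place of $b_{-\infty}$ when $a=-\infty$), so $I_{[a,b]}=(b_a,b_b)$ is finitely generated for any $a,b\in\T$. Conversely, if $I_K=(h_1,\dots,h_n)$, pick a bend $\tau_i\in B(h_i)\cap K$ for each $i$ and let $\sigma_\pm$ be the minimum and maximum of the $\tau_i$; then $[\sigma_-,\sigma_+]\subseteq K$, each $h_i$ bends in $[\sigma_-,\sigma_+]$, and by the Key claim so does every element of $I_K$, in particular every $b_\tau$ with $\tau\in K$---forcing $\tau\in[\sigma_-,\sigma_+]$. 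If $K$ is not a closed interval with endpoints in $\T$ then $K\supsetneq[\sigma_-,\sigma_+]$, so some $\tau\in K$ violates this and $I_K$ is not finitely generated. Thus the finitely generated primes are exactly the $I_{[a,b]}$. Finally, if $I_K=(h)$ is principal and proper then $h\mid b_t$ for every $t\in K$, which by unique factorization forces $h$ to be a unit times a single $b_t$, so $K=\{t\}$; and $I_{\{t\}}=(b_t)$. Hence the principal primes are precisely those indexed by points of $\T$ (together with the zero ideal $I_\varnothing$).
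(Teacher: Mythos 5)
Your proposal is correct and follows essentially the same route as the paper: show that $I_K$ equals the set of functions bending in $K$, deduce primality from $B(fg)=B(f)\cup B(g)$, recover an interval from an arbitrary prime via the $b_{t_1}, b_{t_2} \mapsto b_r$ identity, and read off the finitely generated and principal cases from unique factorization. The main content you add is a careful proof of the one assertion the paper states without argument --- that a tropical sum of functions bending at $t_1$ and at $t_2$ must bend somewhere in $[t_1,t_2]$ --- which is exactly your ``Key claim,'' proven by a convexity analysis. One small imprecision there: the sets $A=\{f=\ell\}\cap U$ and $B=\{g=\ell\}\cap U$ need not be intervals (it is their \emph{complements} $\{f<\ell\}$, $\{g<\ell\}$ in $U$ that are convex, being superlevel sets of concave functions, so $A$ and $B$ are each a union of at most two intervals); fortunately your argument uses only that one of $A,B$ has an endpoint interior to $U$, which still holds whenever neither equals $U$, so the contradiction goes through unchanged. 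You also spell out the finitely generated and principal cases in more detail than the paper, which merely calls them ``immediate,'' and your treatment there is sound.
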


\begin{proof}
  If $f\in \overline{\mathbb{T}[x]}$ has a bend at $t\in K\subset \T$ then $f\in I_{\{t\}} \subset
  I_K$.  Conversely, if $f\in I_K$ then $f = \sum_{i=1}^n g_i b_{t_i}$ for some $t_i\in K$ and
  $g_i\in \overline{\mathbb{T}[x]}$.  Each summand $g_ib_{t_i}$ has a bend at $t_i$, and the tropical
  sum of a function with a bend at $t_i$ and a function with a bend at $t_j$ must have a bend in the
  closed interval $[t_i,t_j]$.  Thus when $K$ is convex (i.e., an interval) we indeed have that the
  non-constant functions of $I_K$ are precisely the functions with a bend in $K$.

  From this it follows that if $K$ is an interval then $I_K$ is prime: if $f,g\in
  \overline{\mathbb{T}[x]}\setminus I_K$ then neither $f$ nor $g$ has a bend in $K$ so the same is
  true of $fg$, hence $fg\in \overline{\mathbb{T}[x]}\setminus I_K$.  Conversely, if
  $\mathfrak{p}\subset \overline{\mathbb{T}[x]}$ is prime then by the factorization property of
  $\overline{\mathbb{T}[x]}$, any element of $\mathfrak{p}$ must be divisible by $b_t$ for some
  $t\in \T$.  The identity
  \[
  t_1r^{-1}b_{t_1} + b_{t_2} = b_r \text{ for any }r \in [t_1,t_2] \subset \T
  \]
  then shows that $\mathfrak{p} = I_K$ where $K$ is the convex hull of all such $t$.  The statement
  about finitely generated primes and principal primes immediately follows.
\end{proof}

\section{Toric varieties and their tropical models}\label{sec:toric}

\subsection{Toric schemes over $\Fun$ and $\T$}

Let $N \cong \Z^n$ be a lattice with dual lattice $M$.  The datum of a rational polyhedral fan
$\Delta$ in $N_\R$ determines an $\Fun$-scheme as in the usual construction of toric varieties.  For
each cone $\sigma \in \Delta$, there is a corresponding monoid $M_\sigma = M \cap \sigma^\vee$.  If
$\tau\subset \sigma$ is a face then $M_\tau$ is a localization of $M_\sigma$.  Hence adjoining zeros
to these monoids, after writing them multiplicatively, and taking $\spec$ results in a collection of affine $\Fun$-schemes that glue
together according to the incidence relations of the fan $\Delta$ to give an $\Fun$-scheme
$X^\Delta$.  Base change to a ring $R$ yields the usual toric variety over $R$
associated with the fan $\Delta$.  The full subcategory of $\Fun$-schemes spanned by the objects of
the form $X^\Delta$ is equivalent to the category of toric varieties and torus-equivariant
morphisms.

Kajiwara \cite{Kajiwara} and Payne \cite{Payne} have each studied toric varieties over $\T$.  The
$\T$-points of the open torus stratum are canonically identified with the points of $N_\R \cong \mathbb{R}^n$, and
$X^\Delta(\T)$ is then the polyhedral partial compactification of $N_\R$ dual to the fan $\Delta$,
with a codimension $i$ stratum at infinity for each $i$-dimensional cone.  For example, $\PP^n(\T)$
is homeomorphic, and combinatorially equivalent, to an $n$-simplex.

\begin{remark}
Given a toric variety $X_k$, where $k$ is a valued field, some authors refer to the corresponding tropical
scheme $X_\T$ as the tropicalization of $X_k$.  
\end{remark}

Observe that the toric $\Fun$-schemes $X^\Delta$ described above are locally integral.  However, the class
of locally integral $\Fun$-schemes is larger; it allows objects that are disconnected, non-normal, and/or not of finite
type.  In the scheme-theoretic tropical geometry that we develop in this paper, the class of ambient
spaces in which tropicalization makes sense can naturally be enlarged from toric varieties to
locally integral $\Fun$-schemes.

\subsection{Cox's quotient construction}\label{sec:Cox}

We now explain how Cox's construction of toric varieties as quotients of affine space can be defined over $\F_1$.  Let $X= X^\Delta$ be as above and suppose the rays $\Delta(1)$ span $N_\R$, i.e., $X$ has no torus factors.  We define the \emph{Cox algebra} as the free $\F_1$-algebra on the set of rays: \[\Cox(X) := \F_1[x_\rho ~|~\rho\in\Delta(1)].\]

For any field $k$ the toric variety $X_k$ is split and the divisor class group is independent of the field $k$, so we can formally define $\Cl(X) := \Z^{\Delta(1)}/M$, where \[M \hookrightarrow \Z^{\Delta(1)},~ m \mapsto (m \cdot u_\rho)_{{\rho\in \Delta(1)}},\] and $u_\rho$ denotes the first lattice point on the ray $\rho \subset N_\R$.   

The Cox algebra has a grading by the divisor class group, via the composition 
\[
\Cox(X)\setminus\{0\} \cong \N^{\Delta(1)} \hookrightarrow \Z^{\Delta(1)} \twoheadrightarrow \Cl(X),
\]
where the above isomorphism is from a multiplicative to an additive monoid. The graded pieces are the eigenspaces for the action of the dual group 
\[
G := \Hom(\Cl(X),\Z) \subset \Hom(\Z^{\Delta(1)},\Z)
\]
on $\spec \Cox(X) \cong \mathbb{A}^{\Delta(1)}_{\F_1}$.

Each class $[D]\in \Cl(X)$ is represented by a torus-invariant Weil divisor $D$ and determines a coherent sheaf $\mathscr{O}_X(D)$ on $X$, the global
sections of which are naturally isomorphic to the $\F_1$-module of homogeneous elements in $\Cox(X)$
of multi-degree $[D]$.  If $X$ is complete then each graded piece is finite and the sections of this
$\F_1$-sheaf are naturally the lattice-points in a polytope.

The irrelevant ideal in $\Cox(X)$ is generated by the elements
$x_\sigma := \prod_{\rho \notin \sigma(1)} x_\rho$ for all cones $\sigma \in \Delta$.  This
determines an open subscheme
\[
U := \bigcup_{\sigma\in\Delta} \spec \Cox(X)[x^{-1}_\sigma] \subset \mathbb{A}_{\F_1}^{\Delta(1)}.
\]

\begin{proposition}\label{prop:F1coxquot}
With notation as above, $X$ is the categorical quotient $U/G$ in $\F_1$-schemes.
\end{proposition}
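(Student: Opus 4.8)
The plan is to mimic the classical Cox construction of toric varieties as GIT-style quotients, but over $\F_1$ where everything is combinatorial. First I would recall that $\spec\Cox(X) \cong \mathbb{A}^{\Delta(1)}_{\F_1}$ carries the action of $G = \Hom(\Cl(X),\Z)$ described above, and that the open subscheme $U = \mathbb{A}^{\Delta(1)}_{\F_1} \setminus V(B)$ is $G$-invariant (since $B$ is a monomial ideal generated by the $G$-semi-invariants $x_\sigma$, each principal open $D(x_\sigma)$ is $G$-stable, and $U = \bigcup_\sigma D(x_\sigma)$). So we get a natural morphism $U \to U/G$ in whatever sense a categorical quotient exists, and the task is to identify this with $X = X^\Delta$.

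The key step is to produce the quotient map $\pi \co U \to X$ explicitly on affine charts and check it is a categorical quotient. For each cone $\sigma \in \Delta$ the affine chart of $X$ is $\spec \F_1[M_\sigma]$ (adjoining a zero), where $M_\sigma = M\cap\sigma^\vee$; the corresponding affine chart of $U$ is $D(x_\sigma) = \spec \Cox(X)[x_\sigma^{-1}]$. I would check that the inclusion $M_\sigma \hookrightarrow \Z^{\Delta(1)}$ sending $m$ to $(m\cdot u_\rho)_\rho$ identifies $\F_1[M_\sigma]$ with the degree-zero part (i.e., the $G$-invariants) of the $\Z^{\Delta(1)}$-graded monoid-with-zero $\Cox(X)[x_\sigma^{-1}]$: indeed a Laurent monomial $\prod x_\rho^{a_\rho}$ is $G$-invariant exactly when $(a_\rho)_\rho$ lies in the image of $M$, and after inverting $x_\sigma$ the exponents $a_\rho$ for $\rho\notin\sigma(1)$ are unconstrained, so the invariant monomials are precisely those of the form $\prod_{\rho}x_\rho^{m\cdot u_\rho}$ with $m\cdot u_\rho \ge 0$ for $\rho\in\sigma(1)$, i.e.\ $m\in\sigma^\vee\cap M$. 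This gives $\bigl(\Cox(X)[x_\sigma^{-1}]\bigr)^G \cong \F_1[M_\sigma]$ on the nose. These isomorphisms are compatible with the face relations $\tau\subset\sigma$ (both sides localize the same way), so they glue to a morphism $\pi\co U \to X$ that is $G$-invariant and realizes $X$ as the quotient.

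Finally I would verify the universal property of the categorical quotient: given any $G$-invariant morphism $f\co U \to Y$ of $\F_1$-schemes, it factors uniquely through $\pi$. By covering $Y$ by affines and $U$ by the charts $D(x_\sigma)$, this reduces to the affine statement that $\spec$ of the $G$-invariant subalgebra corepresents $G$-invariant maps out of $\spec$ of the big algebra — which for $\F_1$-algebras (monoids-with-zero) is elementary, since taking $G$-invariants is right adjoint to the forgetful functor from graded to plain monoids-with-zero and $\spec$ is contravariant. One must also check the gluing is consistent, i.e.\ that an invariant map on $D(x_\sigma)$ and on $D(x_{\sigma'})$ agree on the overlap; this follows because the overlap chart corresponds to the common face and the invariant-subalgebra construction is functorial under the localizations identifying faces. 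The main obstacle I anticipate is purely bookkeeping: carefully checking that ``categorical quotient'' is interpreted in the right category (all $\F_1$-schemes, not just affine ones) and that the gluing data for $U$ descends correctly — the subtlety flagged in \S\ref{sec:closed} that complements of closed immersions need not be open is exactly why the paper already remarked that $U$ is a genuine $\F_1$-scheme, so that point is handled, and no genuinely hard input beyond the standard toric dictionary is needed.
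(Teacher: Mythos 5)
Your proposal is correct and follows essentially the same route as the paper: cover $U$ by the $G$-stable principal opens $D(x_\sigma)$ (the paper's $U_\sigma$), compute the $G$-invariant subalgebra $\Cox(X)[x_\sigma^{-1}]^G$ on each chart and identify it with $\F_1[\sigma^\vee\cap M]$ via the map $m\mapsto (m\cdot u_\rho)_\rho$, and then observe that the resulting affine quotients glue along face relations exactly as the toric charts do. The paper simply delegates the invariant computation, the affine universal property, and the gluing compatibility to Cox's original proof (``this is an immediate translation of Cox's Theorem 2.1 and its proof''), whereas you spell them out; your adjunction argument for why $\spec$ of the invariant subalgebra corepresents $G$-invariant maps is a clean way to justify the affine step, but it is not a different proof strategy.
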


\begin{proof}
This is an immediate translation of \cite[Theorem 2.1]{Cox} and its proof to the setting of monoids.
We cover $U$ by $\F_1$-open affine $G$-invariant charts $U_\sigma := \spec\Cox(X)[x^{-1}_\sigma]$
and observe that Cox's argument carries over to show that \[\Cox(X)[x^{-1}_\sigma]^G =
\Cox[x^{-1}_\sigma]_0 \cong \sigma^\vee\cap M.\]  This implies that for this chart we have
the categorical quotient \[U_\sigma/ G = \spec \sigma^\vee \cap M,\] and following Cox's argument again we see that the way these affine quotients glue together to yield the categorical quotient $U/ G$ is identical to the way the affine charts corresponding to the cones in the fan $\Delta$ glue together to produce the toric variety $X$.
\end{proof}

\section{Bend loci}\label{sec:hypersurfaces}

In this section we define the \emph{bend locus scheme} of a tropical regular function; locally, a
bend locus is the tropical analogue of the zero locus of a regular function.  These bend loci will be the basic building blocks of scheme-theoretic
tropicalization.

Recall that over a ring $R$, a polynomial $f \in R[x_1, \ldots, x_n]$ determines a zero locus $V(f) \subset
\mathbb{A}^n_R$ as the set of points where $f$ vanishes, but $V(f)$ has the additional structure of
a scheme over $R$ given by $\spec R[x_1, \ldots, x_n]/(f)$.  There are various heuristic arguments
(e.g., \cite[\S3]{Sturmfels-first-steps}, \cite[\S3.1]{Mikhalkin-ICM}) that the correct analogue of
zero locus in the tropical setting is the locus of points where the piecewise linear graph of a
tropical polynomial $f$ is nonlinear---i.e., the locus where the graph ``bends''. This set appears
in the literature under various names: the tropical hypersurface, tropical vanishing locus, corner
locus, or bend locus of $f$.  We will refer to it as the \emph{set-theoretic bend locus}, and denote it $\setBend(f)$.

In embedding tropicalization into semiring scheme theory, the relevant question is then how to realize this set $\setBend(f)$ as the $\T$-points of a naturally associated semiring scheme, and to do so in a way that generalizes from affine space
to a larger class of $\F_1$-schemes and allows for coefficients in an arbitrary idempotent semiring
rather than just $\T$.  Associating a closed subscheme structure to the set-theoretic bend locus means realizing it as the set of solutions to a system of polynomial equations over
$\T$---more precisely, we must construct a congruence on the coordinate algebra of the ambient
affine scheme (and a quasi-coherent congruence sheaf in the non-affine case) such that the
$\T$-points of the quotient form the set-theoretic bend locus.  While the $\T$-points alone are not
enough to uniquely determine this congruence, the particular congruence we propose here appears
quite natural and allows for a robust theory of scheme-theoretic tropicalization to be developed.

Given an idempotent semiring $S$, an $\F_1$-algebra $M$ (which is the set of monomials) and
$f \in M\otimes S$, we construct a congruence $\langle \bend(f) \rangle$ which defines the bend
locus $\Bend(f)$ of $f$ as a closed subscheme of $\spec M\otimes S$.  (Note the capitalization: we use $\Bend$ for scheme-theoretic and $\setBend$ for set-theoretic bend loci, consistent with our use of $\Trop$ for scheme-theoretic and $\trop$ for set-theoretic tropicalization.)  The generators of this
congruence are called the \emph{bend relations} of $f$.  When the ambient space is a torus and
$S=\T$, the set of $\T$-points of $\Bend(f)$ equals $\setBend(f)$.  The scheme $\Bend(f)$
contains strictly more information than the set $\setBend(f)$.  It determines the
multiplicities (see \S\ref{sec:mults}), and while the set of $S$-points of $\Bend(f)$ does not in
general determine $f$ up to a scalar, the scheme structure does in some cases, such as when $S$ is a
semifield and $f$ is homogeneous (see Lemma \ref{lem:hyprecov} below).

\begin{remark}\label{rem:bend-locus-trop-difference}
  A word of caution: in the literature, the set-theoretic bend locus of a tropical polynomial is
  often called a ``tropical hypersurface,'' and the set-theoretic tropicalization of a hypersurface
  is an example of one.  Indeed, the set-theoretic tropicalization of the hypersurface $V(f)$ coincides with the
  set-theoretic bend locus of the tropical polynomial $\nu(f)$ obtained by coefficient-wise valuation: $\trop(V(f)) = \setBend(\nu(f))$.  However, this equality
  breaks at the level of schemes.  We shall define scheme-theoretic tropicalization, $\Trop$, below in
  \S\ref{sec:tropicalization} by taking the bend relations of the coefficient-wise valuations of
  \emph{all} elements in an ideal.  When enriched with this scheme structure, the tropicalization of
  a hypersurface is usually cut out by more relations than just the bend relations of a single
  tropical polynomial: $\Trop(V(f))$ is always a closed subscheme of $\Bend(\nu(f))$, but this containment can be strict.  See \S\ref{sec:vs} for examples where additional relations are needed and hence that $\Trop(V(f)) \subsetneq \Bend(\nu(f))$.
\end{remark}

\subsection{The bend relations}\label{section:affinebendrels}

Let $S$ be an idempotent semiring and $M$ an $\Fun$-module (or algebra).  Given $f\in M\otimes S$
and $m\in \supp(f)$, we write $f_{\widehat{m}}$ for the result of deleting the $m$-term from $f$.

\begin{definition}\label{def:setbend}
  The \emph{bend relations} of $f\in M\otimes S$ are the relations 
  \[
  \{f \sim f_{\widehat{m}}\}_{m\in \supp(f)}.
  \]
  We write $\bend(f)$ for the $S$-module congruence on $M\otimes S$ generated by the bend relations of
  $f$, and if $J\subset M\otimes S$ is an $S$-submodule then we write $\bend(J)$ for the $S$-module
  congruence generated by the bend relations of each $f\in J$; these congruences are called the
  \emph{bend congruences} of $f$ and $J$ respectively.  The set of $S$-module homomorphisms
  $M\otimes S/\bend(f) \to S$, or $S$-algebra homomorphisms $M\otimes S/\langle\bend(f)\rangle \to
  S$ when $M$ is an $\Fun$-algebra, is called the \emph{set-theoretic bend locus of $f$, denoted $\setBend(f)$}.
\end{definition}

\begin{example}
If $f = a_1 x_1 + a_2 x_2 + a_3 x_3 \in S[x_1,x_2,x_3]$ then the bend relations of $f$ are
\begin{align*}
a_1 x_1 + a_2 x_2 + a_3 x_3 & \sim a_2 x_2 + a_3 x_3 \\ & \sim a_1 x_1 + a_3 x_3 \\ & \sim a_1 x_1 + a_2 x_2.
\end{align*}
\end{example}

\begin{lemma}\label{units-lemma}
  Let $f \in M\otimes S$.
  \begin{enumerate}
\item If $\lambda\in S$ is a unit then $\bend(\lambda f)=\bend(f)$.
\item Suppose $\{f_i\}$ are elements in $M\otimes S$ and
  $N$ is the $S$-submodule they generate.  The module congruence $\bend(N)$ is equal to
  the module congruence generated by $\{\bend(f_i)\}$.
\end{enumerate}
\end{lemma}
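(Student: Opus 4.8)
For part (1), the plan is to observe that multiplication by a unit $\lambda \in S$ induces an $S$-module automorphism of $M \otimes S$, since $\lambda^{-1}$ exists; moreover this automorphism carries $f$ to $\lambda f$ and carries $f_{\widehat a}$ to $(\lambda f)_{\widehat a}$ because deleting the $a$-term commutes with scaling all coefficients. Hence it sends each bend relation $f \sim f_{\widehat a}$ to the bend relation $\lambda f \sim (\lambda f)_{\widehat a}$, and since an automorphism of $M \otimes S$ carries the congruence generated by a set of pairs to the congruence generated by the image pairs, we get $\bend(\lambda f) = \bend(f)$ (applying the same argument to $\lambda^{-1}$ gives the reverse inclusion, so equality is clean). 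This step is short and I expect no obstacle.

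For part (2), one inclusion is immediate: each $f_i$ lies in $N$, so $\bend(f_i) \subset \bend(N)$ for every $i$, hence the module congruence generated by $\{\bend(f_i)\}$ is contained in $\bend(N)$. The substance is the reverse inclusion: I must show that for an arbitrary $f \in N$, every bend relation $f \sim f_{\widehat a}$ of $f$ already lies in the congruence $C$ generated by the $\bend(f_i)$. I would write $f = \sum_j \mu_j f_j$ as an $S$-linear combination (for a finite subset of the $f_i$ and scalars $\mu_j \in S$) and argue by induction on the number of summands, so the key reduction is to the case $f = g + h$ where the bend relations of $g$ and of $h$ are each assumed to lie in $C$, and to deduce that the bend relations of $g+h$ lie in $C$.

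The crux — and the step I expect to be the main obstacle — is this additivity claim. Fix $a \in \supp(g+h)$; I want $(g+h) \sim (g+h)_{\widehat a}$ in $C$. The natural approach is to split on whether $a$ lies in $\supp(g)$, in $\supp(h)$, or both, and to use that congruences are compatible with addition: if $p \sim p'$ is in $C$ then $p + q \sim p' + q$ is in $C$ for any $q$. For instance, when $a \in \supp(g) \setminus \supp(h)$, one would like to add $h$ to the relation $g \sim g_{\widehat a}$ to conclude $g + h \sim g_{\widehat a} + h = (g+h)_{\widehat a}$; the mild subtlety is that, because $S$ is merely idempotent rather than cancellative, I must check that deleting the $a$-term of $g+h$ genuinely equals $g_{\widehat a} + h$ on the nose in $M \otimes S$, using that the $a$-coefficient of $g+h$ is just the $a$-coefficient of $g$ (as $a \notin \supp(h)$). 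The case $a \in \supp(g) \cap \supp(h)$ is the delicate one: here I would combine the relation $g \sim g_{\widehat a}$ (giving $g + h \sim g_{\widehat a} + h$) with the relation $h \sim h_{\widehat a}$ (giving $g_{\widehat a} + h \sim g_{\widehat a} + h_{\widehat a}$), so by transitivity $g + h \sim g_{\widehat a} + h_{\widehat a}$ in $C$; then I must separately identify $g_{\widehat a} + h_{\widehat a}$ with $(g+h)_{\widehat a}$, which holds because the $a$-coefficients in $g$ and $h$ have both been removed and all other coefficients add up as before. Once this additivity lemma is in hand, the induction on the length of the expression $\sum_j \mu_j f_j$ closes the argument, using part (1) to absorb the scalars $\mu_j$ (noting that if $\mu_j$ is not a unit we still have $\bend(\mu_j f_j) \supseteq$ nothing automatic — so in fact I should argue directly that the bend relations of $\mu_j f_j$ lie in $\bend(f_j)$, which follows since $\mu_j f_j \sim \mu_j (f_j)_{\widehat a} = (\mu_j f_j)_{\widehat a}$ is obtained by multiplying the bend relation of $f_j$ by $\mu_j$, and congruences are closed under the $S$-action). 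I would state that absorption as a small sub-step rather than invoking part (1) directly.
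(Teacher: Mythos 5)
Your proposal is correct. The paper gives no proof of this lemma (it only remarks that ``the arguments are straightforward''), and your argument for both parts is the natural one a reader would supply: in part (1) multiplication by a unit is a module automorphism that carries the generating bend relations of $f$ bijectively onto those of $\lambda f$; in part (2) you verify $\supp(g+h)=\supp(g)\cup\supp(h)$ in the idempotent setting (no additive cancellation) and then push each bend relation of a finite sum $\sum_j \mu_j f_j$ down to $C$ by adding the bend relation of one summand to the rest and iterating, with $\bend(\mu_j f_j)\subset\bend(f_j)$ handled directly by the $S$-action rather than via part (1). The only point worth making explicit when you write it up is that when $\mu_j$ kills the $a$-coefficient of $f_j$ there is simply no bend relation at $a$ for $\mu_j f_j$, so that case is vacuous.
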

\begin{proof}
The arguments are straightforward.
\end{proof}

The first item above is analogous to the fact that, classically, all nonzero scalar multiples of a polynomial define the same hypersurface.  The second item is used multiple times throughout this paper; it allows one to work explicitly with a set of generators for the relations defining the closed subschemes we shall be studying in the idempotent world.

The following result sheds light on the behavior described in Remark \ref{rem:bend-locus-trop-difference}, namely, the presence of relations in a scheme-theoretically tropicalized hypersurface beyond those coming from the defining polynomial.  Indeed, the second part of Lemma \ref{lem:trop-lintrop} below implies that these extra relations are due to the interplay between the valuation and the ring structure prior to tropicalizing, as opposed to something taking place entirely on the idempotent side of the story.  The first part of the lemma is also crucial to our theory and will be relied up heavily in \S\ref{sec:tropicalization} where tropicalization is studied; note in particular that, via the bend relations, ideals in an idempotent algebra play an important role when forming quotient algebras.

\begin{lemma}\label{lem:trop-lintrop}
  Suppose $M$ is an $\Fun$-algebra, $f\in M\otimes S$, and $J\subset M\otimes S$ is an ideal.
\begin{enumerate}
\item The module congruence $\bend(J)$ is in fact a semiring congruence.
\item If $M$ is integral (recall Definition \ref{def:intmon}) and $J=(f)$ is a principal ideal then the semiring congruence $\langle
  \bend(f)\rangle$ generated by $\bend(f)$ is equal to $\bend(J)$.
\end{enumerate}
\end{lemma}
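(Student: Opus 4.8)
The plan is to prove the two statements in sequence, using the structural description of congruence generation from Lemma \ref{lem:CongGen}. For part (1), I would show that the module congruence $\bend(J)$ is stable under multiplication, i.e.\ that if $(p,q) \in \bend(J)$ then $(rp, rq) \in \bend(J)$ for any $r \in A\otimes S$. By Lemma \ref{lem:CongGen}, $\bend(J)$ is the transitive closure of the sub-$S$-module of $(A\otimes S)^2$ generated by the diagonal together with the pairs $(f, f_{\widehat a})$ and $(f_{\widehat a}, f)$ for all $f \in J$ and $a \in \supp(f)$. Transitive closure is clearly compatible with multiplication by a fixed element (a chain connecting $p$ to $q$ multiplies termwise to a chain connecting $rp$ to $rq$), so it suffices to check that the underlying sub-$S$-module is closed under multiplication by elements of $A \otimes S$, and since everything is $S$-linear it is enough to multiply by a single monomial $a \in A$. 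Multiplying the diagonal by $a$ lands back in the diagonal; multiplying a generating pair $(f, f_{\widehat a})$ by a monomial $m \in A$ gives $(mf, m f_{\widehat a})$, and the key observation is that $mf \in J$ (since $J$ is an \emph{ideal}, not merely an $S$-submodule) and $m f_{\widehat a} = (mf)_{\widehat{ma}}$ when $\supp$ is preserved under multiplication by $m$ without collisions—more carefully, $mf_{\widehat a}$ is obtained from $mf$ by deleting the terms of $mf$ supported on $\{ma' : a' = a\}$, which is exactly $(mf)_{\widehat{ma}}$ provided multiplication by $m$ is injective on $\supp(f)$; in the integral case this is automatic, and in general one reduces to it or argues directly that $m f_{\widehat a}$ is an $S$-linear combination of various $(mf)_{\widehat b}$. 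So the generating pairs multiply into $\bend(J)$, hence $\bend(J)$ is a semiring congruence.

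For part (2), assume $A$ is integral and $J = (f)$ is principal. One inclusion is immediate: $\bend(f) \subset \bend(J)$ by definition, and since $\bend(J)$ is a semiring congruence by part (1), the semiring congruence $\langle \bend(f)\rangle$ generated by $\bend(f)$ is contained in $\bend(J)$. For the reverse inclusion $\bend(J) \subset \langle \bend(f)\rangle$, I must show that the bend relations of every element $g \in (f)$ lie in $\langle\bend(f)\rangle$. Write $g = rf$ for some $r \in A \otimes S$. By $S$-linearity and Lemma \ref{units-lemma}(2) it suffices to treat $g = mf$ for a single monomial $m \in A$; here integrality of $A$ is exactly what guarantees that multiplication by $m$ is injective on $A$, so $\supp(mf) = m\cdot\supp(f)$ and the bend relation $mf \sim (mf)_{\widehat{ma}}$ is precisely $m\cdot(f \sim f_{\widehat a})$, which lies in $\langle\bend(f)\rangle$ since semiring congruences are closed under multiplication. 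For a general $r = \sum_i m_i \in A\otimes S$ (finite $S$-linear combination of monomials), I would use the fact that $\bend$ of the submodule generated by the $m_i f$ is generated by the individual $\bend(m_i f)$ (Lemma \ref{units-lemma}(2) again) together with the observation that $rf$ lies in that submodule, so its bend relations are consequences of the $\bend(m_i f)$, each of which we have just shown lies in $\langle\bend(f)\rangle$.

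The main obstacle I anticipate is the bookkeeping around $\supp$ and the deletion operation $(-)_{\widehat a}$ under multiplication: the identity $(mf)_{\widehat{ma}} = m\cdot f_{\widehat a}$ can fail if distinct monomials of $f$ become equal after multiplying by $m$ (so that a cancellation-free description of the support breaks down). The integrality hypothesis in part (2) is precisely designed to rule this out—multiplication by a monomial in an integral $\F_1$-algebra is injective—so the delicate point is really confined to part (1), where no integrality is assumed; there I would either invoke the same trick after observing that the relevant monomials $m$ arising in generators of the sub-$S$-module can be taken to be honest monomials (elements of $A$, not zero divisors after the relevant identifications), or else argue that even when collisions occur, $m f_{\widehat a}$ is still an $S$-linear combination of terms of the form $(mf)_{\widehat b}$ plus diagonal elements, which is all that is needed. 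I expect the cleanest writeup to handle part (1) by the second route and part (2) by the first.
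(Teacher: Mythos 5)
Your proposal is correct and follows essentially the same strategy as the paper: show $\bend(J)$ is an $A\otimes S$-module congruence by checking that a generating pair $(g, g_{\widehat{a}})$ multiplied by a monomial $m$ remains in $\bend(J)$ (handling collisions by writing the pair $(mg, m g_{\widehat{a}})$ as the generating pair $(mg,(mg)_{\widehat{ma}})$ plus a diagonal pair, justified by idempotency), then use transitivity to upgrade to a semiring congruence; and for (2), reduce via Lemma \ref{units-lemma}(2) to monomial multiples $xf$ and use integrality to conclude $\bend(xf)=x\bend(f)$. The only minor imprecision, which you yourself flag and then correct, is the intermediate phrasing claiming $mf_{\widehat a}$ is a combination of various $(mf)_{\widehat b}$ alone — the correct statement is that the pair is a single generating pair plus a diagonal element, which your final formulation captures.
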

\begin{proof}
  By Lemma \ref{lem:CongGen}, it suffices to show that $\bend(J)$ is closed under multiplication by
  bend relations $g\sim g_{\widehat{m}}$ for $g\in J$. 

  We first show that $\bend(J)$ is not just an $S$-module congruence but actually an $M\otimes
  S$-module congruence.  Multiplying a generating relation $g\sim g_{\widehat{m}}$ by a monomial
  $x\in M$ yields a relation of the form $xg\sim xg_{\widehat{m}},$ and we must show that this is in
  $\bend(J)$.  If multiplication by $x$ is injective then this is one of the generating relations of
  $\bend(J)$; if the map is not injective then let $b_1,\ldots, b_n$ be those monomials in $\supp(g)
  \smallsetminus \{m\}$ that are identified with $m$ after multiplication by $x$ and let $\lambda_i
  \in S$ be the coefficient of $b_i$ in $g$.  Then $xg \in J$, so $xg \sim (xg)_{\widehat{xm}}$ is
  a relation in $\bend(J)$, and adding $\sum_i \lambda_i b_i$ to both sides (using the idempotency
  of addition) yields the desired relation $xg \sim xg_{\widehat{m}}$.

  Now suppose $h \sim h'$ is an arbitrary relation in $\bend(J)$ and $g\sim g_{\widehat{m}}$ is a
  generating bend relation.  Then since $\bend(J)$ is an $M\otimes S$-submodule, the two relations
  \[
  h g  \sim h' g  \text{ and } h' g \sim h' g_{\widehat{m}}
  \]
  are both in $\bend(J)$, and hence, by transitivity, the relation $h g \sim
  h'g_{\widehat{m}} $ is as well, and this proves part (1).

  We now turn to part (2). By part (1) we have $\langle \bend(f) \rangle \subset \bend(J)$, so it suffices to prove the opposite containment.   By Lemma \ref{units-lemma} part (2), $\bend(J)$ is generated as a module
  congruence by the module congruences $\bend(xf)$ for $x\in M$. By the integrality hypothesis on
  $M$, multiplication by any monomial $x$ yields an injective map $M \to M$, and so $\bend(xf) =
  x\bend(f) \subset \langle \bend(f) \rangle$.
\end{proof}

\begin{remark}
  If $M$ is not an integral $\Fun$-algebra then the bend congruence of a principal ideal $J=(f)$ can
  be strictly larger than the semiring congruence $\langle \bend(f) \rangle$.  For example, if
  $M=\Fun[x,y]/\langle x^2 \sim xy\rangle$ and $f=x+y \in M\otimes S$ then $\bend(xf)$ contains the
  relation $x^2 \sim 0_S$, while the semiring congruence $\langle\bend(f)\rangle$ does not contain this relation.
\end{remark}

This next proposition states that the set-theoretic bend locus $\setBend(f)$ of a tropical polynomial (i.e., the
solution set to its bend relations $\bend(f)$, recall Definition \ref{def:setbend}) is the usual set-theoretic tropical hypersurface defined
by $f$.  However, since these set-theoretic bend loci are defined slightly more generally and in
order to emphasize the distinction that exists at the scheme-theoretic level between bend loci and
tropicalizations of hypersurfaces (see Remark \ref{rem:bend-locus-trop-difference}), we will adhere
to the `bend locus' terminology.

\begin{proposition}\label{prop:bend-locus}
  Let $S$ be a totally ordered idempotent semiring, $M$ be an $\Fun$-module and $f\in M\otimes S$.
\begin{enumerate}
\item An $S$-module homomorphism $p: M\otimes S \to S$ descends to the quotient by $\bend(f)$, hence yields a point of $\setBend(f)$, if and
  only if either the maximum of the terms of $p(f)$ is attained  at least twice or $p(f)=0_S$.  

\item If $M$ is an $\Fun$-algebra then the $S$-algebra homomorphisms $M\otimes S \to S$ descending to the quotient by $\langle \bend(f) \rangle$ are in bijection with the $S$-module homomorphisms $M\otimes S \to S$ descending to the quotient by $\bend(f)$; consequently, these two notions of $\setBend(f)$ coincide.

\item If $X = \spec \Fun[x_1^{\pm 1}, \ldots, x_n^{\pm 1}]$ is a torus and $f\in\T[x_1^{\pm 1}, \ldots, x_n^{\pm 1}]$, then $X(\T) = (\T^\times)^n = \R^n$ and $\setBend(f)$ is the subset of points at which the function $X(\T) \to \T$ defined by $f$ is nonlinear.
\end{enumerate}
\end{proposition}
\begin{proof}
  For part (1), first note if $p: M\otimes S \to S$ is an $S$-module homomorphism then $p(f)$ is a
  sum of terms given by evaluating $p$ on each monomial term of $f$.  Thus $p$ factors through the
  quotient by $\langle\bend(f)\rangle$ if and only if $p(f) = p(f_{\widehat{i}})$ for each $i$. This
  happens if and only if either $|\supp(f)| \ge 2$  and no single summand in $p(f)$ is strictly
  larger than all others, or if all summands are $0_S$.
 
  Part (2) is clear, so we now prove part (3).  A homomorphism $p: \T[x_1^{\pm 1}, \ldots, x_n^{\pm 1}] \to \T$ is
  determined by the $n$-tuple of tropical numbers $p(x_1),\ldots, p(x_n)\in \T^\times = \R$, so we
  identify $p$ with a point in $\R^n$.  This Euclidean space is divided into convex polyhedral
  chambers as follows.  For each term of $f$ there is a (possibly empty) chamber consisting of all
  $p$ for which that term dominates, with the interior consisting of points where this term strictly
  dominates.  Since $f$ is the tropical sum (Euclidean maximum) of its terms, the chamber interiors
  are where the graph of $f$ is linear and the walls are where the maximum is attained at least
  twice and hence the graph is nonlinear.
\end{proof}

\begin{remark}\label{rem:setbend-equal-trophyperplane}
  If $M$ is a finitely generated $\Fun$-module and $f\in M \otimes \T$, or if $M$ is a (Laurent)
  polynomial algebra over $\Fun$ and $f\in M\otimes \T$ is homogeneous of degree one, then by the
  preceding proposition $\setBend(f)$ is a tropical hyperplane.  Tropical hyperplanes were first
  defined in \cite{Speyer1} as the set-theoretic tropicalization of classical hyperplanes and later
  recast in terms of \emph{tropical Pl\"ucker vectors}, or equivalently, \emph{valuated matroids},
  in \cite{Speyer2}.  A more algebraic exposition of tropical hyperplanes, and tropical linear
  spaces more generally, closer to the spirit of this paper is Frenk's thesis \cite[Chapter
  4]{Frenk}.
\end{remark}

In general one cannot recover a tropical polynomial from its set-theoretic bend locus (consider,
e.g., $x^2+ax+0 \in \T[x]$ as $a\in \T$ varies).  In the case of homogeneous polynomials this is
manifest as the statement that the tropicalization of the Hilbert scheme of projective hypersurfaces
is not a parameter space for set-theoretically tropicalized hypersurfaces (see
\cite[\S6.1]{Tropical-Hilbert}).  The following result says in particular that when enriched with
its scheme structure, one can indeed recover, up to a scalar, a homogeneous tropical polynomial from
its bend locus.  This result is used in \S\ref{sec:mults} to show that the scheme structure of a tropicalized hypersurface determines the multiplicities associated to the facets of the corresponding balanced polyhedral complex.

\begin{lemma}\label{lem:hyprecov}
  Suppose $S$ is a semifield and $f\in M\otimes S$.
\begin{enumerate}
\item The module congruence $\bend(f)$ determines $f$ uniquely up to a scalar.\footnote{Maclagan and
    Rinc\'on \cite{Maclagan-Rincon} have subsequently observed that Proposition
    \ref{prop:bend-locus} combined with the duality theory of tropical linear spaces easily imples
    the following more general statement.  Suppose $M$ is a finitely generated $\Fun$-module and
    $L\subset M\otimes S$ is a tropical linear space.  Let $N$ denote the set-theoretic bend locus
    of $L$ (i.e., the intersection of all bend loci of elements $f\in L$); it is given by
    $N = \Hom_S(M\otimes S/\bend(L), S)$.  Then $N$ is a tropical linear space, and $L$ and $N$ are
    dual tropical linear spaces (in the sense of, e.g., \cite{Speyer2}): $L^\perp = N$ and
    $N^\perp = L$.  This allows one to recover $L$ from $\bend(L)$.}
\item If $M$ is an $\Fun$-algebra that admits a grading by an abelian group such that $M_0 = 0_M$
  and $f$ is homogeneous, then $\langle\bend(f)\rangle$ determines $f$ up to a scalar.
\end{enumerate}
\end{lemma}

\begin{remark}
  The hypotheses for (2) are satisfied by the Cox algebra of a toric scheme $X$ over $\F_1$ whose
  base change to a ring is proper.  We show below in \S\ref{sec:CoxHypersurf} that a homogeneous
  polynomial in $\Cox(X_\T)$ defines a closed subscheme of $X_\T$, generalizing the case of a
  homogeneous polynomial (in the usual sense) defining a closed subscheme of tropical projective
  space.
\end{remark}

\begin{proof}
  For (1), write $f = \sum_{i=1}^n a_im_i$ with $a_i\in S, m_i\in M$.  If $n=1$ then the
  result is obvious, otherwise consider the elements $\phi$ of the dual module $\Hom (M\otimes S,
  S)$ of the form $m_i \mapsto 0_S$ for all $i$ except two indices, say $j_1$ and
  $j_2$.  Such a homomorphism descends to the quotient by $\bend(f)$ if and only if
  $a_{j_1}\phi(m_{j_1}) = a_{j_2} \phi(m_{j_2})$.  In this way we recover the ratio of each pair of
  coefficients $a_{j_1},a_{j_2}$, and hence the vector of all coefficients $(a_1,\ldots,a_n)$ up to
  a scalar.  Item (2) follows from (1) since the hypotheses guarantee that $\langle\bend(f)\rangle_{\deg(f)} = \bend(f)$, where the latter is viewed as a congruence on the module $M_{\deg(f)}\otimes S$.
\end{proof}

\subsection{Unicity of the bend relations}\label{sec:unicity}

Let $S$ be a totally ordered idempotent semifield, $M$ a finitely generated $\Fun$-module (i.e., a
finite pointed set), and $f\in M\otimes S$.  In this section we show that the
congruence $\bend(f)$ of bend relations of $f$ is determined in a canonical way, via idempotent
linear algebra, by the set-theoretic tropical hyperplane of $f$, i.e., its set-theoretic bend locus
$\setBend(f)$.  To explain this result we must first develop some idempotent linear algebra.

Given an $S$-module $V$, we will write 
\[
V^\vee =\Hom_S(V,S)
\]
for the $S$-linear dual.  As usual, $V^\vee$ inherits an $S$-module structure via
$(s\cdot \alpha) (v) = \alpha(s\cdot v)$ for $s\in S$, $v\in V$ and $\alpha \in V^\vee$.

\begin{proposition}
There is a canonical isomorphism of $S$-modules $(M\otimes S)^\vee \cong M\otimes S$.
\end{proposition}
\begin{proof}
Modules of the form $M\otimes S$ are free with a canonical finite basis given by the
non-basepoint elements of the $\Fun$-module $M$.
\end{proof}

The dual of a quotient of $M\otimes S$ is canonically a submodule of $(M\otimes S)^\vee$.  Going in
the other direction is not quite so straightforward: the dual of an arbitrary submodule
$W\subset M\otimes S$ is canonically a quotient of $(M\otimes S)^\vee$ if and only if $W$ has the
property that every linear map $W \to S$ admits an extension to all of $M\otimes S$; conveniently,
by \cite[Lemma 3.2.1]{Frenk}, the assumption that $S$ is a totally ordered semiring implies that
this condition holds for any $W$.

Recall from Definition \ref{def:setbend} that the set-theoretic bend locus $\setBend(f)$ is defined
as the $S$-linear dual of the quotient $(M\otimes S)/\bend(f)$.  (As explained in Remark
\ref{rem:setbend-equal-trophyperplane}, the set-theoretic bend locus is essentially the tropical
hyperplane defined by $f$.)  In particular, in the setting here,
\[
\setBend(f) \subset (M\otimes S)^\vee
\]
is a submodule, and its $S$-linear dual, $\setBend(f)^\vee$, is thus a quotient of
$M\otimes S^{\vee\vee} \cong M\otimes S$.  The main result of this section identifies this quotient
precisely as the quotient by the bend relations of $f$.  Since one can easily recover a congruence
$J$ on a module $V$ from its associated quotient $V/J$ as the pullback of the diagram
$V \to V/J \leftarrow V$, this allows one to recover the congruence $\bend(f)$ from the set
$\setBend(f)$ together with its $S$-module structure.

\begin{theorem}\label{thm:double-dual}
  Let $S$ be a totally ordered idempotent semifield, $M$ a finitely generated $\Fun$-module, and
  $f\in M\otimes S$.  The canonical map from $M\otimes S /\bend(f)$ to its double dual is an
  isomorphism; in particular, there is a canonical isomorphism $\setBend(f)^\vee \cong M\otimes S /\bend(f)$.
\end{theorem}
\begin{proof}
  Consider the following commutative diagram of $S$-modules:
  \[
  \begin{diagram}
  \node{M\otimes S } \arrow{e,t}{\cong} \arrow{s,A} \node{(M\otimes S)^{\vee \vee}} \arrow{s} \\
  \node{M\otimes S / \bend(f)} \arrow{e} \node{M \otimes S / \bend(f)^{\vee\vee}.}
  \end{diagram}
  \]
  Since $(M\otimes S)/\bend(f)^\vee$ is a submodule of $(M\otimes S)^\vee$ and $S$ is totally ordered, the right vertical arrow
  is surjective, and hence the bottom horizontal arrow is also surjective.

  To show that the bottom arrow is also injective, we will show that if $g, g'\in M\otimes S$ are
  equal at each point $p$ in $\setBend(f)$, then they are equal in the
  quotient by $\bend(f)$.  We will examine their coefficients one at a time. 

  If $m\in M$ is a monomial in the complement of $\supp(f)$ then $f(m) = 0_S$ and so $m \in \setBend(f)$.  Since $g$ and $g'$ agree on $\setBend(f)$, the coefficients of $m$ in $g$ and $g'$ are identical.

  We next consider the coefficients of monomials in $\supp(f)$.  

  First suppose $\supp(f) = \{a\}$ is a single monomial. In this case $\bend(f)$ is spanned by the
  single relation $a \sim 0_S$.  Since the coefficients of $g$ and $g'$ are identical for
  all monomials $b\neq a$, $g$ and $g'$ are identified in the quotient by $\bend(f)$.

  Now suppose $|\supp(f)| \ge 2$.  For any $a\in M$, let $\chi_a \in (M\otimes S)^\vee$ denote the
  map sending $a$ to $1_S$ and all other basis elements to $0_S$.  For any pair of distinct elements
  $a,b \in \supp(f)$, consider the element $p_{ab} \in (M\otimes S)^\vee$ given by the formula
  \[
  p_{ab} = \left( \frac{1_S}{\chi_a(f)} \right) \chi_a + \left( \frac{1_S}{\chi_b(f)} \right)
\chi_b.
  \] 
  Idempotency of addition implies that $p_{ab}$ factors through the quotient by $\bend(f)$, i.e.,
  $p_{ab} \in \setBend(f)$, since for any $c\in \supp(f)$ we have
\[
p_{ab}(f) =  1_S + 1_S  = 1_S = p_{ab}(f_{\widehat{c}}). 
\]
  For each $a\in \supp(f)$, consider the element
  $g_a \defeq \chi_a(g)/ \chi_a(f) \in S$ and likewise for $g'$,  and let 
\[
m = \min_{a\in M} g_a, \quad \text{and}\quad m' = \min_{a\in M} g'_a.
\]
By hypothesis, $p_{ab}(g) = p_{ab}(g')$ for all $a,b\in \supp(f)$, which yields the set of equations
  \begin{equation}\label{eq:ggprime-rels}
  (\mathscr{R}_{ab}): g_a + g_b =  g'_a + g'_b.
  \end{equation}
  Modulo the congruence $\bend(f)$, we may assume the minima $m$ and $m'$ are each attained at least
  twice by the following argument.  If $m$ is attained only once by some $g_a$, and $g_b$ achieves
  the minimum of the remaining non-minimal terms, then 
  \[ 
  g = g + g_b f_{\widehat{a}}
  \]
  since for any $c\in M$, $\chi_c(g+g_b f_{\widehat{a}}) = \chi_c(g) + g_b\chi_c( f_{\widehat{a}})$,
  and if $c\notin \supp(f_{\widehat{a}}) = \supp(f)\smallsetminus a$ then this reduces to
  $\chi_c(g)$, while when $c \in \supp(f_{\widehat{a}})$ we have
  \[
  g_b\chi_c(f_{\widehat{a}}) \leq g_c \chi_c(f_{\widehat{a}}) = \chi_c(g),
  \]
  and so in all cases $\chi_c(g) = \chi_c(g+g_b f_{\widehat{a}})$.  Then
  \[
  g = g + g_b f_{\widehat{a}} \sim g + g_b f,
  \]
  and in the final expression the minimum is equal to $g_b$ and is attained at least twice (at the
  monomials $a$ and $b$), so we replace $g$ with this element, and likewise for $g'$.  Now, for $a$
  and $b$ such that $g_a=g_b=m$, the equation $(\mathscr{R}_{ab})$ implies that $m \geq m'$, and
  choosing $a$ and $b$ such that $g'_a=g'_b=m'$ we likewise see that $m \leq m'$.  Hence $m=m'$.

  Now let $a_1, \ldots, a_n$ be the elements of $\supp(f)$ ordered so that $g_{a_1} = g_{a_2} \leq
  \cdots \leq g_{a_n}$.  Since $m=m'$, the equation $(\mathscr{R}_{a_1 a_2})$ implies that $g_{a_1}
  = g_{a_2} = g'_{a_1} = g'_{a_2}$.  For any $k > 2$, $g_{a_k}$ and $g'_{a_k}$ are both greater than
  or equal to $m$, and so the equation $(\mathscr{R}_{a_1 a_k})$ implies that $g_{a_k} = g'_{a_k}$.
  Thus we have shown that $g$ and $g'$ are equal in $M\otimes S / \bend(f)$.
\end{proof}

\subsection{Functoriality of the bend relations}
Here we give two lemmas expressing how the bend relations behave with respect to maps induced by morphisms of $\Fun$-modules and $\Fun$-algebras.  These results are fundamental to the development that follows.  Recall that for an $\Fun$-module $M$ and a semiring $S$, we view the elements of $M$ as the monomials of the $S$-module $M\otimes S$.  An $\Fun$-module morphism $\varphi : M \to N$ is simply a map of pointed sets.  This induces an $S$-module homomorphism $\varphi\otimes \id_S : M\otimes S \to N\otimes S$ that we denote simply by $\varphi_S$.  If $\varphi$ is a morphism of $\Fun$-algebras then $\varphi_S$ is an $S$-algebra homomorphism.  In either case, $\varphi_S$ sends monomials to monomials, and in the latter case it is multiplicative on monomials.

Any $S$-algebra homomorphism $\psi: A \to B$ induces a map $\psi_*$ that sends ideals in $A$ to
ideals in $B$ by sending $J$ to the ideal generated by the image of $J$, and likewise for semiring congruences.

\begin{lemma}\label{functoriality-lemma1}
  Suppose $\varphi: M\to N$ is an $\Fun$-module morphism and $f\in M\otimes S$.
\begin{enumerate}
\item $(\varphi_S)_* \bend(f) \subset \bend{(\varphi_S(f))}$.
\item If $M$ and $N$ are $\Fun$-algebras and $\varphi$ is an $\Fun$-algebra morphism then
  $(\varphi_S)_* \langle\bend(f)\rangle \subset \langle\bend{(\varphi_S(f))}\rangle$, and if
  $J\subset M\otimes S$ is an ideal then $(\varphi_S)_* \bend(J) \subset \bend((\varphi_S)_*J)$.
\item The above inclusions are equalities if $\varphi$ is injective.
\end{enumerate}
\end{lemma}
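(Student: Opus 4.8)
The plan is to reduce each inclusion to a comparison of generators. Note that $\bend(f)$ is generated as an $S$-module congruence, $\langle\bend(f)\rangle$ as a semiring congruence, and $\bend(J)$ (a semiring congruence by Lemma~\ref{lem:trop-lintrop}(1)) as a semiring congruence, each by the bend relations $f\sim f_{\widehat a}$; and for a module (resp.\ semiring) homomorphism $\varphi$, the push-forward $\varphi_*C$ of a congruence $C$ is generated by the $\varphi$-images of any generating set of $C$, since the preimage under $\varphi\times\varphi$ of a congruence of a given type is again one of that type. So for part (1) it suffices to check that $(\varphi(f),\varphi(f_{\widehat a}))\in\bend(\varphi(f))$ for every $a\in\supp f$. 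Write $\psi\co M\to N$ for the underlying map of pointed sets. If $\psi(a)=0_N$ the pair is diagonal. If $\psi(a)=n\neq 0_N$ and $a$ is the unique element of $\supp f$ with $\psi$-image $n$, then $\varphi(f_{\widehat a})=\varphi(f)_{\widehat n}$, so the pair is a defining bend relation of $\varphi(f)$. Otherwise, let $b_1,\dots,b_r$ be the other elements of $\supp f$ with $\psi$-image $n$ and $\lambda_i\in S$ their coefficients in $f$; then $\varphi(f)$ and $\varphi(f_{\widehat a})$ differ only in the coefficient of $n$, and adding $(\sum_i\lambda_i)\,n$ to both sides of the bend relation $\varphi(f)\sim\varphi(f)_{\widehat n}$ and using idempotency of addition recovers $\varphi(f)\sim\varphi(f_{\widehat a})$, exactly the argument from the proof of Lemma~\ref{lem:trop-lintrop}.

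Part (2) follows quickly. Since $\varphi$ is now a semiring homomorphism, $\varphi_*\langle\bend(f)\rangle$ is the semiring congruence generated by the pairs $(\varphi(f),\varphi(f_{\widehat a}))$, and these lie in $\bend(\varphi(f))\subseteq\langle\bend(\varphi(f))\rangle$ by part (1); this is the first inclusion. For the ideal inclusion, $\bend(J)$ is generated as a semiring congruence by the bend relations of all $g\in J$; by part (1) each $(\varphi(g),\varphi(g_{\widehat a}))$ lies in $\bend(\varphi(g))$, and since $\varphi(g)\in\varphi(J)\subseteq\varphi_*J$ we get $\bend(\varphi(g))\subseteq\bend(\varphi_*J)$, which is a semiring congruence by Lemma~\ref{lem:trop-lintrop}(1); hence $\varphi_*\bend(J)\subseteq\bend(\varphi_*J)$.

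For part (3), assume $\varphi$ is injective, so $\psi$ restricts to an injection of $M\setminus\{0_M\}$ into $N\setminus\{0_N\}$; then $\supp\varphi(f)=\psi(\supp f)$, and every $n\in\supp\varphi(f)$ has a unique $\psi$-preimage $a$, with $\varphi(f_{\widehat a})=\varphi(f)_{\widehat n}$. Hence the defining bend relations of $\bend(\varphi(f))$ and of $\langle\bend(\varphi(f))\rangle$ are exactly the $\varphi$-images of the generators of $\bend(f)$ and of $\langle\bend(f)\rangle$, giving the reverse inclusions (and thus equality) for the first two statements. I expect the reverse inclusion $\bend(\varphi_*J)\subseteq\varphi_*\bend(J)$ for the ideal statement to be the main obstacle. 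The approach is to present $\varphi_*J$ as the $S$-submodule generated by the elements $n\varphi(g)$ with $n$ a monomial of $N$ and $g\in J$, so that by Lemma~\ref{units-lemma}(2) the congruence $\bend(\varphi_*J)$ is generated by the $\bend(n\varphi(g))$; since $\varphi_*\bend(J)$ is a semiring congruence, hence an $N\otimes S$-module congruence, and contains $\bend(\varphi(g))$, it contains $n\bend(\varphi(g))$. This settles the inclusion provided $n\bend(\varphi(g))=\bend(n\varphi(g))$, i.e.\ provided multiplication by $n$ is injective on $\supp\varphi(g)$. When $N$ is not integral this can fail for a monomial $n$ outside the image of $M$, which may identify distinct elements of $\supp\varphi(g)$ and thereby produce bend relations of $n\varphi(g)$ lying outside $\varphi_*\bend(J)$ (the idempotency trick of part (1) does not rescue this, as it would require the bend relations of $n\varphi(g)$ itself to be among the available generators, which they are not). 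I would therefore prove the ideal case of (3) under the standing assumption that $N$ is integral (automatic in the toric applications, where all the $\Fun$-algebras involved are integral), in which case multiplication by each monomial of $N$ is injective and the displayed inclusion becomes an equality.
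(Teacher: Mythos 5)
Your proof of part (1) is essentially the paper's argument: both reduce to the observation that when $\varphi$ identifies several elements of $\supp(f)$ with $a$, the relation $\varphi(f)\sim\varphi(f_{\widehat a})$ is obtained from the bend relation $\varphi(f)\sim\varphi(f)_{\widehat{\varphi(a)}}$ of $\varphi(f)$ by adding back the collapsed coefficients on the monomial $\varphi(a)$ and using idempotency. Your cases $\psi(a)=0_N$ and $|\psi^{-1}(\psi(a))\cap\supp f|=1$ are just degenerate instances, so this part matches. Your derivations of part (2) and of the first two equalities in part (3) are also the expected ones and are correct.

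Where you have added genuine content is the ideal case of part (3), which the paper dismisses with ``the remaining statements follow directly.'' Your concern is real: injectivity of $\varphi$ does not by itself yield $\bend(\varphi_*J)\subseteq\varphi_*\bend(J)$, because $\varphi_*J$ is generated as an $S$-module by the products $n\varphi(g)$ with $n$ a monomial of $N$ and $g\in J$, and a monomial $n$ lying outside the image of $M$ can identify elements of $\supp(\varphi(g))$ that $\varphi$ keeps distinct, producing bend relations of $n\varphi(g)$ not reachable from $\varphi_*\bend(J)$. A concrete instance: let $M=\Fun[x]$, $N=\Fun[y,z]/\langle z\sim zy\rangle$, $\varphi\colon x\mapsto y$ (injective), and $J=(1_S+x)\subset M\otimes S$. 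Then $z=z\cdot\varphi(1+x)\in\varphi_*J$, so $\bend(\varphi_*J)$ contains the relation $z\sim 0_S$; but the $\T$-point of $N\otimes\T$ given by $y\mapsto 1_\T$ and $z\mapsto$ any finite value factors through $\varphi_*\bend(J)$ while not killing $z$, so $z\sim 0_S\notin\varphi_*\bend(J)$. Your proposed fix---assume $N$ integral---is exactly what is needed, and it is in force in the paper's one substantive use of this clause (Lemma~\ref{qc-localization-lemma}), where $N=M[x^{-1}]$ is integral precisely when the localization $\varphi$ is injective.
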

\begin{proof}
  Since $(\varphi_S)_* \bend(f) $ is generated as an $S$-module congruence by the image of the
  generators of $\bend(f)$, it suffices to show that any relation of the form
  $\varphi_S(f) \sim \varphi_S(f_{\widehat{m}})$ for $m\in M$ is implied by the relation
  $\varphi_S(f) \sim \varphi_S(f)_{\widehat{\varphi(m)}}$ in $\bend(\varphi_S(f))$.  Let
  $g_0, \cdots, g_k$ be the terms of $f$ corresponding to the monomials in
  $\varphi^{-1}(\varphi(m)) \subset M$, with $g_0$ being the term of $f$ whose support is $m$.  The
  relation $\varphi_S(f)_{\widehat{\varphi(m)}} \sim \varphi_S(f)$ implies
\begin{align*}
\varphi_S(f_{\widehat{m}}) = \varphi_S(f)_{\widehat{\varphi(m)}} + \varphi_S( g_1 + \cdots + g_k) & \sim
\varphi_S(f) + \varphi_S( g_1 + \cdots + g_k) \\
& = \varphi_S(f + g_1 + \cdots + g_k) = \varphi_S(f),
\end{align*}
where the last equality follows from the idempotency of addition in $S$.  This proves (1), from
which part (2) follows immediately.
 When $\varphi$ is injective it is clear that $\varphi_S(f_{\widehat{m}}) =
 \varphi_S(f)_{\widehat{\varphi_S(m)}}$, and so (3) follows from (1) and (2).
\end{proof}

The next lemma asserts that formation of the bend congruences commutes with monomial localization.
\begin{lemma}\label{qc-localization-lemma}
  Let $M$ be an $\Fun$-algebra, $\varphi: M \to M[x^{-1}]$ a localization map, and $J\subset M\otimes S$ an ideal.  The induced localization $\varphi_S : M\otimes S \to M[x^{-1}]\otimes S \cong (M\otimes S)[x^{-1}]$ satisfies $(\varphi_S)_*\bend(J) = \bend((\varphi_S)_*J)$.
\end{lemma}
\begin{proof}
  By Lemma \ref{functoriality-lemma1}, there is a containment $(\varphi_S)_*\bend(J) \subset
  \bend((\varphi_S)_*J)$ which is an equality if $\varphi$ is injective (for example, if $M$ is integral).  So suppose that $\varphi$ is not injective.  The congruence $\bend((\varphi_S)_*J)$
  is generated by relations of the form
\[
\varphi_S(f) \sim \varphi_S(f)_{\widehat{m}}
\]
for $f\in J$ and $m \in \supp(\varphi_S(f))\subset M[x^{-1}]$.  Note that $\varphi(m) = \varphi(n)$ if and only if $x^k m = x^k n$ for some sufficiently large natural number
$k$.  Since $f\in J$ implies $x^kf \in J$, it now suffices to observe that $\varphi_S(f) =
x^{-k}\varphi_S(x^kf)$ and $\varphi_S(f)_{\widehat{m}} =  x^{-k}\varphi_S((x^kf)_{\widehat{x^k m}})$.
\end{proof}

\subsection{Construction of bend loci}

Let $S$ be an idempotent semiring and $X$ an $\Fun$-scheme.  Let $X_S$ be the base change of $X$ from
$\Fun$ to $S$.  We now construct bend loci as closed subschemes of $X_S$.  

\begin{lemma}\label{lem:qcohcong}
Given a quasi-coherent ideal sheaf $\mathscr{I} \subset \mathscr{O}_{X_S}$, the association \[\spec M\otimes S \mapsto \bend(\mathscr{I}(\spec M\otimes S))\] defined for affine open subschemes  $\spec M \subset X$ determines a quasi-coherent congruence sheaf $\bend(\mathscr{I})$ on $X_S$.
\end{lemma}

\begin{proof}
For $\spec M \subset X$, the ideal $\mathscr{I}(\spec M\otimes S) \subset M\otimes S$ determines a semiring congruence $\bend(\mathscr{I}(\spec M\otimes S))$ on $M\otimes S$ by Lemma \ref{lem:trop-lintrop}(1).  By Lemma \ref{qc-localization-lemma}, formation of this congruence commutes with restriction to a smaller
open affine subscheme.
\end{proof}

\begin{definition}\label{def:bendideal}
  The \emph{bend locus} of a quasi-coherent congruence sheaf $\mathscr{I}$ on $X_S$ is the closed subscheme $\Bend(\mathscr{I}) \subset X_S$ defined by the quasi-coherent congruence sheaf $\bend(\mathscr{I})$.  If $X=\spec M\otimes S$ is affine and $I\subset M\otimes S$ is an ideal, we denote the corresponding closed subscheme simply by $\Bend(I)$.
  \end{definition}
 
  If $\mathscr{L}$ is a line bundle on $X$ (i.e., a locally free sheaf of rank one) and
  $f \in \Gamma(X_S, \mathscr{L}_S)$ is a global section of the base change to $S$, then $f$
  determines a quasi-coherent ideal sheaf: for a local trivialization
  $\mathscr{L}|_U \cong \mathscr{O}_X|_U$ this is given by the principal ideal generated by the
  image of $f|_{U_S}$ in $\mathcal{O}_{X_S}(U_S)$.  We denote the corresponding closed subscheme by
  $\Bend(f) \subset X_S$.  Note that, by Lemma \ref{lem:trop-lintrop}(2) and Proposition
  \ref{prop:locint}, if $X$ is locally integral then the bend relations locally defined by $f$ and
  by the principal ideal generated by $f$ coincide.  Since we will be primarily concerned with
  situations in which $X$ is locally integral, we can and will use the notation $\Bend(f)$
  without ambiguity.
 
\begin{proposition}
  Let $\varphi: X \to Y$ be a morphism of $\Fun$-schemes, $\mathscr{L}$ a line bundle on $Y$, and
  $f \in \Gamma(Y_S, \mathscr{L}_S)$.  Then $\varphi_S: X_S \to Y_S$
  maps $\Bend(\varphi_S^*f)$ into $\Bend(f)$.
\end{proposition}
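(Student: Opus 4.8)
Unwinding the notation, the assertion is that the morphism $\varphi\otimes S$, restricted to the closed subscheme $\Bend(\varphi^*f)\subset X_S$, factors through the closed immersion $\Bend(f)\hookrightarrow Y_S$. Writing $\mathscr{J}\subset\mathscr{L}\otimes S$ for the $\mathscr{O}_{Y_S}$-submodule generated by $f$ and $\varphi^*\mathscr{J}\subset(\varphi^*\mathscr{L})\otimes S$ for the submodule it pulls back to, this amounts to showing that the comorphism $(\varphi\otimes S)^{\sharp}\co\mathscr{O}_{Y_S}\to(\varphi\otimes S)_*\mathscr{O}_{X_S}$ carries the congruence sheaf $\bend(\mathscr{J})$ into (the pushforward of) $\bend(\varphi^*\mathscr{J})$, so that the composite $\mathscr{O}_{Y_S}\to(\varphi\otimes S)_*\bigl(\mathscr{O}_{X_S}/\bend(\varphi^*\mathscr{J})\bigr)$ descends to a map $\mathscr{O}_{Y_S}/\bend(\mathscr{J})\to(\varphi\otimes S)_*\bigl(\mathscr{O}_{X_S}/\bend(\varphi^*\mathscr{J})\bigr)$; that descent is precisely the claimed factorization.

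The plan is to reduce this to the affine case, which is legitimate because closed immersions are affine and a morphism into a scheme factors through a closed subscheme precisely when it does so over each chart of an affine cover of the target. So I would pick an affine cover $\{\spec B\}$ of $Y$ trivializing $\mathscr{L}$, and for each such chart an affine cover $\{\spec A\}$ of $\varphi^{-1}(\spec B)$; over each $\spec A$ the chosen trivialization pulls back to a trivialization of $\varphi^*\mathscr{L}$, the section $f$ is represented by an element $f_B\in B\otimes S$ and $\varphi^*f$ by its image $\overline{\varphi}(f_B)$ under the induced $S$-algebra homomorphism $\overline{\varphi}\co B\otimes S\to A\otimes S$ — up to a multiplicative unit coming from the comparison of the two local trivializations, which by Lemma \ref{units-lemma}(1) leaves the bend congruence unchanged. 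By the construction of the bend locus the relevant congruences in these charts are $\bend\bigl((f_B)\bigr)$ on $B\otimes S$ and $\bend\bigl((\overline{\varphi}(f_B))\bigr)$ on $A\otimes S$ (these coincide with $\langle\bend(f_B)\rangle$ and $\langle\bend(\overline{\varphi}(f_B))\rangle$ when the $\Fun$-algebras are integral, by Lemma \ref{lem:trop-lintrop}(2)), and the content of the proposition over this chart is exactly the containment $\overline{\varphi}_*\,\bend\bigl((f_B)\bigr)\subset\bend\bigl(\overline{\varphi}_*(f_B)\bigr)$.

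That containment is precisely Lemma \ref{functoriality-lemma1}(2), with Lemma \ref{qc-localization-lemma} absorbing the case in which $\overline{\varphi}$ is a monomial localization (which is how passage to smaller affine subcharts is handled); together with the compatibility of the congruence sheaves $\bend(\mathscr{J})$ and $\bend(\varphi^*\mathscr{J})$ with localization — established in the construction of $\Bend(-)$ — the local containments glue to the required map of congruence sheaves, yielding the factorization. I do not expect a substantial obstacle here: the algebraic heart of the statement is already packaged in the functoriality lemma, and the only real care needed is bookkeeping with the trivializations of $\mathscr{L}$ and $\varphi^*\mathscr{L}$ and invoking Lemma \ref{units-lemma}(1) to see that the resulting scalar ambiguity is harmless, plus the routine verification that restriction to subcharts is compatible on both sides.
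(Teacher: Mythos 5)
Your proof is correct and takes essentially the same route as the paper: reduce to affine charts (trivializing $\mathscr{L}$, with Lemma \ref{units-lemma}(1) handling the scalar ambiguity) and then apply Lemma \ref{functoriality-lemma1}(2). The paper states this more tersely but the underlying argument is identical.
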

\begin{proof}
It suffices to check on affine patches, where the result follows from Lemma \ref{functoriality-lemma1}.
\end{proof}

\subsection{Tropical Proj and Cox}\label{sec:CoxHypersurf}

If $S$ is a semiring and $A$ is an $\mathbb{N}$-graded $S$-algebra, then we can form an $S$-scheme $\proj A$ in the usual way, either by topologizing the set of homogeneous prime ideals in $A$ and constructing a structure sheaf from them, or by gluing the affine patches obtained by dehomogenizing.  We say that a congruence $J$ on $A$ is \emph{homogeneous} if the grading on $A$ descends to the quotient by $J$.  In this case, $\proj A/J$ is naturally a closed subscheme of $\proj A$.

Suppose $M$ is an $\N$-graded monoid-with-zero.  Then $M\otimes S$ is an $\N$-graded $S$-algebra, and if $I \subset M \otimes S$ is a homogeneous ideal then the congruence $\bend(I)$ is homogeneous.  Indeed, the bend relations of a homogeneous polynomial are manifestly homogeneous, and $\bend(I)$ is generated by the bend relations of the homogeneous elements of $I$.  We denote the resulting closed subscheme $\proj M\otimes S/\bend(I)$ by $\Bend(I)$.  As usual, the context should make it clear whether $\Bend(I)$ refers to this projective subscheme or to the affine scheme $\spec M\otimes S/\bend(I)$.  Implicit here is the claim that using the bend relations of homogeneous elements in $I$ to define a scheme via the Proj construction is equivalent to gluing the affine subschemes obtained by taking the bend relations after dehomogenizing the ideal.  This is a special case of Proposition \ref{prop:cox-compat} below.  

More generally, let $X=X^\Delta$ be a toric variety over $S$ without torus factors and consider its
$\Cl(X)$-graded algebra $\Cox(X) = S[x_{\rho}~|~\rho\in\Delta(1)]$.  Identical to the case of toric schemes over $\Fun$ discussed in \S\ref{sec:Cox}, each divisor class $[D] \in \Cl(X)$ is represented by a torus-invariant Weil divisor $D$ and the homogeneous polynomials in $\Cox(X)$ of multi-degree $[D]$ are in bijection with global sections of a coherent sheaf $\mathscr{O}_X(D)$.  Just as in the usual setting of toric varieties, these coherent sheaves need not be line bundles, but there is an inclusion $\Pic(X) \subset \Cl(X)$ of line bundle isomorphism classes into divisor classes and via this inclusion we can identify the global sections of any line bundle on $X$ with the homogeneous elements of $\Cox(X)$ of the corresponding multi-degree.

Recall (Proposition \ref{prop:F1coxquot}) that $X = U/G$, where $U$ is the complement of the vanishing of the irrelevant ideal and $G=\Hom(\Cl(X),\mathbb{Z})$.  Suppose $I \subset \Cox(X)$ is a homogeneous ideal generated by global sections of line bundles on $X$.  Then every homogeneous $f\in I$ is the global section of a line bundle and we can consider the intersection of the bend loci $\Bend(f) \subset X$ defined by these global sections, or we can consider the affine subscheme $\Bend(I) \subset \spec \Cox(X) \cong \mathbb{A}_S^{\Delta(1)}$ and attempt to take its quotient by the group $G$ to get a closed subscheme of $X$.  The following result says that these two constructions coincide.

\begin{proposition}\label{prop:cox-compat}
Let $I \subset \Cox(X)$ be as above.   Then \[\bigcap_{f \in I\text{ homogeneous}}\Bend(f) = (\Bend(I)\cap U)/G\] as closed subschemes of $X$, where the latter is the categorical quotient in the category of $S$-schemes.
\end{proposition}

\begin{proof}
Since $I$ is generated as an $S$-module by its homogeneous elements, Lemmas \ref{units-lemma}(2) and \ref{lem:trop-lintrop}(2) reduce us to proving the result in the case of a single homogeneous polynomial: $I =(f)$.

  For each cone $\sigma\in\Delta$, let $x_{\widehat{\sigma}} := \prod_{\rho \notin
    \sigma(1)}x_\rho$.  The restriction of $\Bend(f)$ to the affine open $\spec
  \Cox(X)[x^{-1}_{\widehat{\sigma}}] \subset U$ is defined by $\iota_*\langle\bend(f)\rangle$, where $\iota :
  \Cox(X) \rightarrow \Cox(X)[x^{-1}_{\widehat{\sigma}}]$ is the localization map.  The subalgebra
  of $G$-invariants on this chart is the degree zero
  piece \[(\Cox(X)[x^{-1}_{\widehat{\sigma}}]/\iota_*\langle\bend(f)\rangle)_0 =
  \Cox(X)[x^{-1}_{\widehat{\sigma}}]_0/\iota_*\langle\bend(f)\rangle_0,\] so this defines the restriction of $(\Bend(f)
  \cap U)/G \subset U/G$ to the affine open \[X_{\sigma} := \spec
  (\Cox(X)[x^{-1}_{\widehat{\sigma}}]_0) \subset X = U/G.\] On the other hand, a trivialization on
  $X_\sigma$ of a line bundle $\mathscr{L}$ on $X$ for which $f$ is a section corresponds to a choice of unit
  $g\in \Cox(X)[x^{-1}_{\widehat{\sigma}}]$ with $\deg(g) = \deg(f)$.  Then the bend locus of $f \in \Gamma(X,\mathscr{L})$ is defined on this affine patch by $\langle\bend(\frac{f}{g})\rangle = \iota_*\langle\bend(f)\rangle_0$, exactly as above.
\end{proof}

\section{Scheme-theoretic tropicalization}\label{sec:tropicalization}

Let $\nu\co k \to \T$ be a nontrivially valued field.  The usual set-theoretic tropicalization is a map,
 \[
\trop\co \{\text{subvarieties of } k^n\} \to \{\text{subsets of } \T^n\}.
\]
Given $Z\subset k^n$, the set $\trop(Z)$ can be described either as the Euclidean closure of the image of $Z$ under coordinate-wise valuation, or as the intersection of
the set-theoretic bend loci of the coefficient-wise valuations of the polynomials in the defining
ideal $I_Z$ of $Z$: 
\begin{equation}\label{eqn:settrop}
\trop(Z) = \overline{\nu(Z)} = \bigcap_{f\in I_Z} \setBend(\nu(f)).
\end{equation}
If $Z$ is a linear subvariety, then $\trop(Z) \subset \T^n$ is a $\T$-submodule.

In this section we construct and study a scheme-theoretic refinement/generalization $\Trop$ of the
set-theoretic tropicalization map $\trop$.  The setup is as follows.  Let $X$ be a locally integral $\Fun$-scheme
and $\nu: R \to S$ a valued ring.  Our scheme-theoretic tropicalization is then a map of posets (with
respect to inclusion)
 \[
\Trop^\nu_X\co \{\text{closed subschemes of } X_R\} \to \{\text{closed subschemes of } X_S\}
\]
(we will often drop the superscript and subscript if they are clear from the context).

Locally on an affine patch $\spec A \subset X$ it sends $Z \subset \spec A \otimes R$ to the scheme-theoretic intersection of the bend loci of the
coefficient-wise valuations of all functions in the defining ideal $I_Z \subset A \otimes R$: 
\[\Trop(Z) = \bigcap_{f\in I_Z}\Bend(\nu(f)).\]
This is functorial in $X$,
compatible with the Cox construction, and when $S=\T$ the composition with $\Hom_{\Sch/\T}(\spec \T,
-)$ recovers the extended tropicalization functor of Kajiwara-Payne (which generalizes the above
set-theoretic tropicalization from affine space to toric varieties).  Moreover, these
scheme-theoretic tropicalizations form an algebraic family as the valuation varies.

\subsection{Construction of the tropicalization functor}\label{sec:tropconstruct}

We first construct $\Trop^\nu_X$ in the case when $X=\spec A$ is an affine $\Fun$-scheme and then
show that these affine pieces glue together to define it in general.  In the affine case the
scheme-theoretic tropicalization sends ideals in $A\otimes R$ to semiring congruences on $A\otimes
S$, and it then globalizes to send quasi-coherent ideal sheaves to quasi-coherent congruence sheaves.

Let $M$ be an $\Fun$-module and consider the following two operations.
\begin{enumerate}
\item Given an $R$-submodule $N\subset M\otimes R$, let $\trop(N)\subset M\otimes S$ denote the
  $S$-submodule generated by the image of $N$ under the coefficient-wise valuation map $\nu:
  M\otimes R \to M\otimes S$.  (Note that when $R$ is a field and $S=\T$ then this is just the usual
  set-theoretic tropicalization map applied to linear subspaces.) 

\item The bend congruence construction $\bend(-)$ sending $S$-submodules of $M\otimes S$ to $S$-module
  congruences on $M\otimes S$.
\end{enumerate}
The composition of these operations, $N \mapsto \bend\trop(N)$, sends $R$-submodules to $S$-module
congruences.  Now suppose $A$ is an $\Fun$-algebra and $I\subset A\otimes R$ is an ideal.  Regarding
$I$ simply as a module, we can apply the above two operations to form the $S$-module congruence
$\bend\trop(I)$ on $A\otimes S$.

\begin{proposition}\label{prop:trop-sends-ideals-to-ideals}
If $A$ is integral then the $S$-submodule $\trop(I)\subset A\otimes S$ is an ideal, and hence
$\bend\trop(I)$ is a semiring congruence.
\end{proposition}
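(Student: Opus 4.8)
The plan is to reduce the first assertion to a single monomial-multiplication check, and then obtain the second assertion by invoking Lemma \ref{lem:trop-lintrop}(1). By construction $\trop(I)$ is an $S$-submodule of $A\otimes S$, so to show it is an ideal — i.e.\ an $(A\otimes S)$-submodule — it suffices to show it is stable under multiplication by each monomial $x\in A$. The key point is that, because $A$ is integral, multiplication by $x$ is an injective map $A\to A$; hence for any $f\in A\otimes R$ the assignment $a\mapsto xa$ identifies $\supp(f)$ bijectively with $\supp(xf)$ and so coefficient-wise valuation commutes with multiplication by $x$, i.e.\ $\nu(xf)=x\,\nu(f)$ in $A\otimes S$. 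This is the only place the integrality hypothesis enters, and it enters precisely because $\nu$ is merely subadditive rather than additive: if two monomials of $f$ collided after multiplication by $x$, the two sides could genuinely differ.

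Granting this identity, first I would observe that for $f\in I$ we have $xf\in I$ since $I$ is an ideal, so $x\,\nu(f)=\nu(xf)\in\nu(I)\subset\trop(I)$. A general element of $\trop(I)$ has the form $\sum_j \lambda_j\,\nu(f_j)$ with $\lambda_j\in S$ and $f_j\in I$, and a general element of $A\otimes S$ has the form $g=\sum_i \mu_i x_i$ with $\mu_i\in S$ and $x_i\in A$ monomials; then
\[
g\cdot\sum_j \lambda_j\,\nu(f_j) \;=\; \sum_{i,j}\mu_i\lambda_j\,x_i\,\nu(f_j) \;=\; \sum_{i,j}\mu_i\lambda_j\,\nu(x_i f_j).
\]
Each $x_i f_j$ lies in $I$, so the right-hand side is an $S$-linear combination of elements of $\nu(I)$ and therefore lies in $\trop(I)$. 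Thus $\trop(I)$ is an $(A\otimes S)$-submodule, i.e.\ an ideal.

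Finally I would apply Lemma \ref{lem:trop-lintrop}(1) to the ideal $J=\trop(I)\subset A\otimes S$: it states that the module congruence $\bend(J)$ is in fact a semiring congruence, which is exactly the claim that $\bend\trop(I)$ is a semiring congruence.

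The step I expect to be the crux — really the only substantive point — is the identity $\nu(xf)=x\,\nu(f)$, or rather recognizing that integrality of $A$ is precisely what makes it hold; the rest is bookkeeping with $S$-linear combinations. It is worth flagging (as in the Remark following Lemma \ref{lem:trop-lintrop}) that without integrality the conclusion genuinely fails, since monomial collisions interact badly with the subadditivity of $\nu$.
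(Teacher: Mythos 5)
Your proof is correct and follows essentially the same route as the paper: reduce to closure under monomial multiplication, use integrality to get $\nu(xf)=x\,\nu(f)$ via injectivity of multiplication by $x$ on monomials, and then invoke Lemma \ref{lem:trop-lintrop}(1) for the congruence claim. You just spell out the $S$-linear bookkeeping a bit more explicitly than the paper does.
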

\begin{proof}
  It suffices to check that $\trop(I)$ is closed under multiplication by monomials.  An arbitrary
  element of $\trop(I)$ is a linear combination of elements of the form $\nu(f)$ for $f\in I$.  If
  $x$ is a monomial and $f\in I$ then $x\nu(f) = \nu(xf) \in \trop(I)$ since multiplication by $x$
  is injective on monomials.  The second claim then follows from Lemma \ref{lem:trop-lintrop} part (1).
\end{proof}

\begin{remark}
The intergality hypothesis is necessary here, as the following example shows.  Let
$A=\Fun[x,y]/\langle x^2 \sim xy \sim y^2\rangle$ and let $f = x-y \in A\otimes R$.  Then $\trop$ sends the
principal ideal $(f)$ to the $S$-submodule  spanned by the single element $x+y$, and this is not an ideal. 
\end{remark}

\begin{definition}[Affine tropicalization]\label{def:affinetrop}
  If $A$ is an integral $\Fun$-algebra and $Z \subset \spec A\otimes R$ is the closed subscheme corresponding
  to an ideal $I$, then we define $\Trop(Z) \subset \spec A\otimes S$ to be the closed subscheme
  determined by the semiring congruence $\bend\trop(I)$, i.e., 
\[
\Trop(Z) := \Bend(\trop(I)).
\]
\end{definition}

\begin{proposition}\label{prop:intersection-of-bend-loci}
  The subscheme $\Trop(Z)$ is the scheme-theoretic intersection $\bigcap_{f\in I}\Bend(\nu(f)).$
\end{proposition}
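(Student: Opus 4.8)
The plan is to translate both sides into statements about semiring congruences on $A\otimes S$ and then apply Lemma~\ref{units-lemma}(2). First I would record the standard fact that for closed subschemes of an affine scheme cut out by a family of congruences $\{J_\alpha\}$, the scheme-theoretic intersection is the closed subscheme cut out by the semiring congruence generated by $\bigcup_\alpha J_\alpha$: since $V(J)\subseteq V(J')$ in $\spec B$ precisely when $J\supseteq J'$, the intersection is $V$ of the smallest semiring congruence containing every $J_\alpha$. Here each $\Bend(\nu(f))$ is, by the construction of bend loci (using integrality of $A$ and Lemma~\ref{lem:trop-lintrop} part (2)), the closed subscheme of $\spec A\otimes S$ defined by the semiring congruence $\langle\bend(\nu(f))\rangle$, so $\bigcap_{f\in I}\Bend(\nu(f))$ is defined by the semiring congruence $C$ generated by $\bigcup_{f\in I}\bend(\nu(f))$.

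It then remains to show $C=\bend\trop(I)$, the congruence defining $\Trop(Z)$. By Proposition~\ref{prop:trop-sends-ideals-to-ideals}, $\trop(I)$ is an ideal, so by Lemma~\ref{lem:trop-lintrop} part (1) the module congruence $\bend\trop(I)$ is in fact a semiring congruence; and since the elements $\nu(f)$, $f\in I$, generate the $S$-module $\trop(I)$ by definition, Lemma~\ref{units-lemma}(2) says that $\bend\trop(I)$ is generated \emph{as a module congruence} by the family $\{\bend(\nu(f))\}_{f\in I}$. Now $C$ is a semiring congruence, hence in particular an $S$-module congruence (it is reflexive, so contains the diagonal, and is closed under the coordinatewise semiring operations, hence under scaling by $S$), and it contains every $\bend(\nu(f))$, so it contains the module congruence they generate, namely $\bend\trop(I)$. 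Conversely $\bend\trop(I)$ is a semiring congruence containing every $\bend(\nu(f))$, since $\nu(f)\in\trop(I)$, so it contains the semiring congruence $C$ that they generate. Hence $C=\bend\trop(I)$, and both subschemes are cut out by this common congruence.

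For the non-affine case one checks the identity on an affine cover and uses Lemma~\ref{qc-localization-lemma} to see that the local congruences restrict compatibly and glue into the quasi-coherent congruence sheaves defining both sides. I expect the only real subtlety to be the bookkeeping around upgrading the module-congruence generation statement of Lemma~\ref{units-lemma}(2) to the level of semiring congruences; this is precisely where the integrality hypothesis on $A$ (via Proposition~\ref{prop:trop-sends-ideals-to-ideals}) enters, so I would invoke it explicitly rather than treat the passage as automatic.
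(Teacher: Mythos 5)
Your proof is correct and uses the same key step as the paper's one-line argument: invoking Lemma~\ref{units-lemma}(2) to see that $\bend\trop(I)$ is generated as an $S$-module congruence by $\{\bend(\nu(f))\}_{f\in I}$. You simply make explicit the two-sided containment between this module congruence and the semiring congruence cutting out the scheme-theoretic intersection (together with the appeal to Proposition~\ref{prop:trop-sends-ideals-to-ideals} and Lemma~\ref{lem:trop-lintrop}(1) to know $\bend\trop(I)$ is a semiring congruence), which the paper leaves implicit; the non-affine remark is superfluous since the proposition is stated in the affine setting.
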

\begin{proof}
The set $\{\nu(f)\}_{f\in I}$ generates $\trop(I)$ as an $S$-module by definition, and so by Lemma \ref{units-lemma}
part (2), the congruence $\bend\trop(I)$ is generated as an $S$-module congruence by
$\{\bend(\nu(f))\}_{f\in I}$.
\end{proof}

We now globalize the above picture.  Let $X$ be a locally integral $\Fun$-scheme and $\mathscr{I}$ a quasi-coherent
ideal sheaf on $X_R$.  If $U=\spec A$ is an affine patch then over $U_R$ the sheaf $\mathscr{I}$ is
given by an ideal $I\subset A\otimes R$.  By taking $U$ to be sufficiently small we can, by Proposition \ref{prop:locint}, assume that $A$ is an integral $\Fun$-algebra.  Then $\trop(I)$ is an ideal in $A\otimes S$, by Proposition
\ref{prop:trop-sends-ideals-to-ideals}.  Hence we have a quasi-coherent ideal sheaf
$\trop(\mathscr{I}|_{U_R})$ over the affine patch $U_S$.  The following lemma shows that these
locally defined  sheaves assemble to give a well-defined quasi-coherent ideal sheaf
$\trop(\mathscr{I})$ over $X_S$.

\begin{lemma}\label{lem:trop-localization-compatibility}
If $A$ is integral then the construction $I \mapsto \trop(I)$ commutes with monomial localizations.  I.e., given a
localization map $\varphi: A \to A[x^{-1}]$, one has $\trop((\varphi_R)_*I) = (\varphi_S)_* \trop(I)$. 
\end{lemma}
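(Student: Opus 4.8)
\emph{Proof plan.} The plan is to establish the two inclusions separately. Throughout, I will use two elementary compatibilities of the coefficient-wise valuation $\nu\co M\otimes R \to M\otimes S$: it commutes with the map induced by any \emph{injective} $\Fun$-module morphism (such a map merely relabels monomials, so it creates no new sums, whereas $\nu$ is not additive), and it commutes with multiplication by a monomial that acts injectively on monomials, for the same reason. Since $A$ is integral, the localization map $\varphi\co A \to A[x^{-1}]$ is injective and $A[x^{-1}]$ is again integral; hence by Proposition \ref{prop:trop-sends-ideals-to-ideals} both $\trop(I)$ and $\trop(\varphi_* I)$ are genuine ideals.

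For the inclusion $\varphi_*\trop(I) \subseteq \trop(\varphi_* I)$: the $S$-module $\trop(I)$ is generated by the elements $\nu(f)$ with $f\in I$, so $\varphi(\trop(I))$ is generated by the elements $\varphi(\nu(f)) = \nu(\varphi(f))$; since $\varphi(f) \in \varphi_* I$, each such element lies in $\trop(\varphi_* I)$, and because the latter is an ideal it contains the ideal $\varphi_*\trop(I)$ generated by $\varphi(\trop(I))$.

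For the reverse inclusion the key point is that every element $g$ of the extension $\varphi_* I \subset A[x^{-1}]\otimes R$ has the form $g = x^{-N}\varphi(f)$ for a single $f \in I$ and some $N \geq 0$: an arbitrary element of $\varphi_* I$ is a finite sum $\sum_j c_j \varphi(f_j)$ with $c_j \in A[x^{-1}]\otimes R$ and $f_j \in I$, each $c_j$ can be put over a common denominator as $x^{-N}\varphi(h_j)$ with $h_j \in A\otimes R$, and then $g = x^{-N}\varphi\bigl(\sum_j h_j f_j\bigr)$ with $\sum_j h_j f_j \in I$. Applying $\nu$ and using the two compatibilities above (with the injective map $\varphi$ and with multiplication by the invertible monomial $x^{-N}$) gives $\nu(g) = x^{-N}\varphi(\nu(f))$, which lies in the ideal $\varphi_*\trop(I)$ because $\nu(f) \in \trop(I)$. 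Since $\trop(\varphi_* I)$ is generated as an $S$-module by these elements $\nu(g)$, we conclude $\trop(\varphi_* I) \subseteq \varphi_*\trop(I)$, and combining the two inclusions finishes the proof.

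The only delicate point — the potential obstacle, though a mild one — is exactly the failure of $\nu$ to commute with monomial maps that \emph{collapse} distinct monomials (since $\nu(a+b) \neq \nu(a)+\nu(b)$ in general), precisely the phenomenon handled in Lemma \ref{qc-localization-lemma}. The integrality hypothesis is what removes it, by forcing $\varphi$ to be injective and $x$ to become a unit in $A[x^{-1}]$; without integrality one expects the statement to fail, in parallel with the non-integral counterexamples accompanying Lemma \ref{lem:trop-lintrop} and Proposition \ref{prop:trop-sends-ideals-to-ideals}.
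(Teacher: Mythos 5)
Your proof is correct and follows essentially the same route as the paper's: both hinge on the observation that, by putting things over a common denominator, every element of $\varphi_* I$ has the form $x^{-n}\varphi(f)$ with $f\in I$, and that coefficient-wise valuation commutes with the injective map $\varphi$ and with multiplication by the unit $x^{-n}$. The paper's proof simply states this in one line (identifying $A$ with its image and asserting that $\trop(\varphi_*I)$ is spanned by the $\nu(x^{-n}f)$), whereas you spell out the two inclusions and the common-denominator step explicitly; the content is the same.
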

\begin{proof}
The ideal $\trop((\varphi_R)_*I)$ is spanned as an $S$-module by the elements of the form
$\nu(x^{-n}f)$ for $f\in I$.  Since $A$ is integral, multiplication by $x$ commutes with valuation
and therefore $\nu(x^{-n}f)=x^{-n}\nu(f)$, and elements of this form span $(\varphi_S)_* \trop(I)$.
\end{proof}

By Lemma \ref{qc-localization-lemma}, applying $\bend(-)$ locally to the quasi-coherent ideal sheaf
$\trop(\mathscr{I})$ yields a quasi-coherent congruence sheaf $\bend\trop(\mathscr{I})$.

\begin{definition}
  Let $X$ be a locally integral $\Fun$-scheme and $Z\subset X_R$ a closed subscheme defined by an ideal
  sheaf $\mathscr{I}_Z$.  The scheme-theoretic tropicalization of $Z$ is the subscheme
  $\Trop^\nu_X(Z)\subset X_S$ defined by the congruence sheaf $\bend\trop(\mathscr{I}_Z)$, i.e., 
\[
\Trop_X^\nu(Z) := \Bend(\trop(\mathscr{I}_Z)).
\]
\end{definition}

\subsection{Basic properties of $\Trop^\nu_X$}
In this section we let $X$ be a locally integral $\Fun$-scheme and $\nu: R\to S$ a valued ring.  We
will present some general properties of tropicalization, but first let us consider the projective case.

Suppose that $X=\PP^n$ and $Z\subset \PP^n_R$ is a closed subscheme over $R$ given by a
homogeneous ideal $I\subset R[x_0, \ldots, x_n]$.  Tropicalization is compatible with the $\proj$ construction
(cf. Theorem \ref{thm:Coxcomp} below) in the following sense:  $\trop(I)$ is a homogeneous ideal in
$S[x_0,\ldots, x_n]$, and $\bend\trop(I)$ is then a homogeneous congruence, which is to say that the
grading descends to the quotient, and one has
\[
\Trop (\proj  R[x_0, \ldots, x_n]/I ) = \proj ( S[x_0,\ldots, x_n] / \bend\trop(I) ).
\]
A key observation (used in defining the Hilbert polynomial and in Theorem \ref{thm:HF2}) is that the
$S$-linear dual,
\[
\left(S[x_0,\ldots, x_n] / \bend\trop(I)\right)_d^\vee,  
\]
of the degree $d$ graded piece of the homogeneous coordinate semiring is a tropical linear space in
$S[x_0,\ldots, x_n]_d^\vee$; namely, it is the tropicalization of the linear subspace
$(R[x_0,\ldots,x_n]/I)_d^\vee \subset R[x_0,\ldots,x_n]_d^\vee$ (as observed in \cite{Maclagan-Rincon}, this
tropical linear space is dual to the tropical linear space $\trop(I_d)$).
This is a special property, and arbitrary projective $\T$-schemes do not satisfy
it in general, as illustrated in the following example.

\begin{example}
Consider the family
\[
Z := \proj\T[x,y,t]/\langle x^2\sim x^2+txy, y^2\sim -\infty \rangle \subset \PP_\T^1 \times \A^1
\rightarrow \A^1,
\]
where $x$ and $y$ are in degree 1, and $t$ is in degree 0 and is the parameter on $\A^1$.  The fiber
$Z_t$ over $t = -\infty$ is cut out by the single equation $y^2 = -\infty$, and it is thus the
tropicalization of the projective scheme cut out by $y^2=0$.  On the other hand, when
$t\neq -\infty$, the degree 2 piece of the homogeneous coordinate semiring of the fiber over $t$
dualizes to the submodule $\{(a,b,-\infty) \:\: | \:\: a\leq b\} \subset \T^3$, which is not a
tropical linear space, and so these fibers cannot be tropicalizations.
\end{example}

This property of projective $\T$-subschemes having homogeneous coordinate semiring whose graded pieces all dualize to tropical linear spaces is an important one.  There is recent and upcoming work investigating this further and developing a theory of such schemes \cite{Maclagan-Rincon,Maclagan-Rincon-Tpts, GGFink}.

Now let $X$ be an arbitrary locally integral $\Fun$-scheme and $\nu: R\to S$ a valued ring.

\begin{proposition}\label{prop:trop-whole-space}
$\Trop_X(X_R) = X_S$.  
\end{proposition}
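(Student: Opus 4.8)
The plan is to reduce immediately to the affine case, since $\Trop^\nu_X$ is defined by gluing the affine tropicalization construction over an affine cover, and the claimed equality $\Trop_X(X_R) = X_S$ can be checked on an affine cover $\{\spec A\}$ of $X$. So it suffices to show that if $A$ is an integral $\Fun$-algebra and $Z = \spec A\otimes R$ is the whole affine scheme — i.e., the defining ideal is the zero ideal $I = (0_{A\otimes R})$ — then $\Trop(Z) = \spec A\otimes S$, which amounts to showing that the semiring congruence $\bend\trop(I)$ is trivial (the diagonal).

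First I would compute $\trop(I)$ for $I = (0)$. By definition $\trop(I) \subset A\otimes S$ is the $S$-submodule generated by the image of $I$ under the coefficient-wise valuation map; since $I = \{0_{A\otimes R}\}$ and $\nu(0_R) = 0_S$, the image is $\{0_{A\otimes S}\}$, so $\trop(I) = (0_{A\otimes S})$, the zero submodule. Next I would observe that the bend congruence of the zero submodule is trivial: $\bend(\{0\})$ is the $S$-module congruence generated by the bend relations $\{f \sim f_{\widehat a}\}_{a\in\supp(f)}$ ranging over $f$ in the zero submodule, and the only such $f$ is $0_{A\otimes S}$, which has empty support, hence contributes no relations. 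Therefore $\bend\trop(I)$ is the congruence generated by the empty set of relations, which is just the diagonal $\Delta \subset (A\otimes S)\times(A\otimes S)$, and $(A\otimes S)/\Delta \cong A\otimes S$. Thus on each affine patch $\spec A$ we get $\Trop(Z|_{\spec A}) = \spec A\otimes S$, and these glue to give $\Trop_X(X_R) = X_S$.

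The main thing to be careful about — the only place this is not utterly formal — is the compatibility of the construction with the affine cover: one should confirm that the quasi-coherent congruence sheaf $\bend\trop(\mathscr{I}_Z)$ for $\mathscr{I}_Z = 0$ is genuinely the trivial congruence sheaf (stalkwise the diagonal) and that its associated closed subscheme is all of $X_S$ rather than a proper closed subscheme. This follows from Lemma~\ref{lem:trop-localization-compatibility} and Lemma~\ref{qc-localization-lemma}, which ensure the local computations above patch consistently, together with the elementary fact that the closed subscheme cut out by the diagonal congruence sheaf is the identity. I do not anticipate a genuine obstacle here; the statement is essentially a sanity check confirming that tropicalization of the trivial ideal yields the trivial congruence, and the proof is a one-line unwinding of definitions once the reduction to the affine case is in place.
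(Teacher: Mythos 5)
Your proof is correct and follows the same route as the paper's: reduce to an affine patch, note that $I=(0)$ gives $\trop(I)=(0)$, and then that the bend congruence of the zero submodule is the trivial (diagonal) congruence since $0_{A\otimes S}$ has empty support. The paper states this in one sentence; you have merely spelled out the unwinding of definitions and the (routine but correct) gluing step in more detail.
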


\begin{proof}
  Consider an affine patch $\spec A \subset X$.  If $I=(0)$ then $\trop(I)=(0)$, and so
  $\bend\trop(I)$ is the trivial congruence.
\end{proof}

One can therefore view the tropical model $X_S$ of $X$ as a canonical tropicalization of $X_R$.
This next lemma is a slight extension of the statement that, when $X$ is a toric variety,
tropicalization commutes with restriction to toric strata and with restriction to open toric subvarieties.

\begin{lemma}\label{lem:restriction-of-trop}
  Let $W\subset X$ be a locally closed locally integral subscheme such that $W$ is locally defined by
  equations of the form $x \sim 0$.  Then $\Trop_X(Z) \cap W_S = \Trop_W (Z\cap W_R)$.  In
  particular, $\Trop_X(W_R) = W_S$.  
\end{lemma}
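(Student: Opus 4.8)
The plan is to make the statement local on $X$ and then reduce to two elementary base cases: restriction to a monomial localization, and restriction to a monomial quotient. Since $\Trop_X$, the bend congruence $\bend(-)$, and the operation $\trop(-)$ are all defined affine-locally and commute with monomial localization (Lemmas~\ref{lem:trop-localization-compatibility} and~\ref{qc-localization-lemma}), it is enough to work over an affine open $\spec A \subset X$, with $A$ integral by Proposition~\ref{prop:integral}; unwinding the hypothesis, $W$ then becomes (on a possibly smaller chart) a monomial localization of a monomial quotient $\pi\colon A \to B = A/\langle x_\rho \sim 0\rangle_{\rho}$, with $B$ integral. The localization step is immediate: if $\psi\colon A \otimes S \to A[g^{-1}] \otimes S$ denotes localization at a monomial $g$, then $\psi_* \bend\trop_A(I) = \bend(\psi_*\trop_A(I)) = \bend\trop_{A[g^{-1}]}(I[g^{-1}])$ by the two lemmas just cited (using that $\trop_A(I)$ is an ideal, Proposition~\ref{prop:trop-sends-ideals-to-ideals}), and $I[g^{-1}]$ is exactly the ideal cutting out $Z \cap \spec A[g^{-1}]_R$. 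So the crux is the quotient step.

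For that, observe that $\pi$ is the identity on the monomials of $A$ not lying in the ideal $(x_\rho)$ and annihilates the rest; consequently the induced maps $\pi_R\colon A\otimes R \to B\otimes R$ and $\pi_S\colon A\otimes S \to B\otimes S$ simply delete all terms supported on killed monomials, $\pi_R$ is surjective with $\pi_R(I)$ the ideal defining $Z \cap W_R$, and $\supp(\pi(f))$ is in bijection (via $\pi$) with $\supp(f)$ minus the killed monomials. Reading off the congruence that $\Trop_X(Z) \cap W_S$ induces on $W_S = \spec B\otimes S$, and using that coefficient-wise valuation commutes with deleting terms (so $\pi_S \circ \nu = \nu \circ \pi_R$, whence $\trop_B(\pi_R(I)) = \pi_S(\trop_A(I))$ by surjectivity of $\pi_R$), the quotient step reduces to the single congruence identity
\[
(\pi_S)_*\,\bend\trop_A(I) \;=\; \bend\bigl(\pi_S(\trop_A(I))\bigr) \;=\; \bend\trop_B\bigl(\pi_R(I)\bigr).
\]

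I would prove this by the two inclusions. The inclusion $\subseteq$ is Lemma~\ref{functoriality-lemma1}(2) applied to the $\Fun$-algebra morphism $\pi$ and the ideal $\trop_A(I)$. For $\supseteq$ I would chase generators: a generating bend relation $h \sim h_{\widehat{b}}$ of $\bend(\pi_S(\trop_A(I)))$ has $h = \pi_S(\tilde h)$ with $\tilde h \in \trop_A(I)$, and $b$ corresponds under $\pi$ to a surviving monomial $\tilde b \in \supp(\tilde h)$; pushing the generating bend relation $\tilde h \sim \tilde h_{\widehat{\tilde b}}$ of $\bend\trop_A(I)$ forward along $\pi_S$ and using that deleting a surviving term commutes with $\pi_S$ (so $\pi_S(\tilde h_{\widehat{\tilde b}}) = h_{\widehat{b}}$) returns precisely $h \sim h_{\widehat{b}}$; hence every generator of the right-hand side lies in $(\pi_S)_*\bend\trop_A(I)$. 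The ``in particular'' clause then follows by taking $Z = W_R$: the lemma together with Proposition~\ref{prop:trop-whole-space} applied to $W$ gives $\Trop_X(W_R) \cap W_S = \Trop_W(W_R) = W_S$, while $\nu(1_R) = 1_S$ forces $\trop\bigl((x_\rho)\bigr) = (x_\rho)$ and hence $\langle x_\rho \sim 0\rangle \subseteq \bend\trop\bigl((x_\rho)\bigr)$, giving $\Trop_X(W_R) \subseteq W_S$ as well.

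The main obstacle is nothing deep but the careful bookkeeping around $\pi$: one must verify that $\pi$ is injective on the surviving monomials, so that supports match up correctly and ``delete a term, then quotient'' agrees with ``quotient, then delete the corresponding term'' — this is exactly what makes both $\pi_S \circ \nu = \nu \circ \pi_R$ and the generator-chase for the inclusion $\supseteq$ work. Everything else is an appeal to the functoriality and localization lemmas already established.
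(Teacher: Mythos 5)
Your proof is correct and follows the same overall skeleton as the paper's (reduce to affine, dispose of the open/localization part via Lemmas~\ref{qc-localization-lemma} and~\ref{lem:trop-localization-compatibility}, then handle the closed case), but for the closed step you take a genuinely different route. The paper invokes Lemma~\ref{lem:monombasis}: adding a monomial $x_\rho$ to $I$ changes $\bend\trop(I)$ precisely by adjoining the relation $x_\rho\sim 0$, so that $\bend\trop_A\bigl(I+(x_\rho)_\rho\bigr)=\langle\bend\trop_A(I),\,x_\rho\sim 0\rangle$ and $\Trop_X(Z\cap W_R)=\Trop_X(Z)\cap W_S$ follows; one then still needs the (implicit) compatibility that tropicalizing $Z\cap W_R$ inside $X$ gives the same subscheme of $W_S$ as tropicalizing it inside $W$. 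You instead work directly with the quotient $\pi\colon A\to B$ and prove the key congruence identity $(\pi_S)_*\bend\trop_A(I)=\bend\trop_B(\pi_R(I))$ by two inclusions, using Lemma~\ref{functoriality-lemma1}(2) one way and a generator chase the other. What your approach buys is that it isolates and makes explicit the single fact on which the whole closed step hinges --- $\pi$ is the identity on surviving monomials and annihilates the rest, hence is injective on surviving monomials, hence $\pi_S\circ\nu=\nu\circ\pi_R$ and deletion of a surviving term commutes with $\pi_S$ --- and it bundles the $\Trop_X$ versus $\Trop_W$ bookkeeping into one equation rather than leaving it implicit. The paper's route is more modular, re-using Lemma~\ref{lem:monombasis} (which is needed elsewhere anyway), at the cost of leaving a small compatibility step unspoken. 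Both are valid; yours is arguably the more self-contained reading of the lemma.
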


\begin{proof}
  It suffices to show this in the affine case.  By Lemmas \ref{qc-localization-lemma} and
  \ref{lem:trop-localization-compatibility}, tropicalization commutes with restriction to an open
  subscheme coming from the $\Fun$ level, so we are reduced to the case when $W$ is a closed subscheme, and
  then the result follows from Lemma \ref{lem:monombasis} below.  The equality $\Trop_X(W_R) = W_S$
  then follows from Proposition \ref{prop:trop-whole-space}.
\end{proof}

For $X$ an $\Fun$-scheme and $R$ a (semi)ring, a morphism $\spec R \to X_R$ is given locally by a
multiplicative map from a monoid to $R$.  Thus, a valuation $\nu:R \to S$ determines a map
$\widetilde{\nu}: X(R) \to X(S)$.  In particular, if $X = \mathbb{A}_{\F_1}^n$ then
$\widetilde{\nu}: R^n \to S^n$ is coordinate-wise valuation.

\begin{proposition}
  If $S$ is totally ordered, then the tropicalization of a point $p\in X(R)$ is the image of the
  point under $\widetilde{\nu}$; more precisely, if $Z\subset X_R$ is the closed subscheme
  corresponding to $p$, then $\Trop^\nu_X(Z)$ is the closed subscheme corresponding to the point
  $\widetilde{\nu}(p)\in X(S)$.
\end{proposition}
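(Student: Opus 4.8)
The plan is to reduce to the affine case and then compute both sides explicitly. By the local nature of the tropicalization construction and the fact that a point of $X(R)$ lies in (and is determined by) some affine chart $\spec A \subset X$, it suffices to treat $X = \spec A$ with $A$ an integral $\Fun$-algebra. There a point $p \in X(R)$ corresponds to an $R$-algebra homomorphism $\pi\co A\otimes R \to R$, i.e. a multiplicative map $A \to R$ extended $R$-linearly, and $Z$ is the closed subscheme cut out by $I_Z \defeq \ker(\pi)$ — but here one must be slightly careful, since over a ring the congruence-theoretic kernel corresponds to the ideal $I_Z$, and this is the ideal whose tropicalization we must compute. Similarly $\widetilde\nu(p) \in X(S)$ corresponds to the multiplicative map $\nu \circ (A \to R)\co A \to S$, extended to an $S$-algebra map $q\co A\otimes S \to S$, and the subscheme it defines is cut out by the congruence $\ker(q) = (A\otimes S)\times_S (A\otimes S)$.

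First I would describe $I_Z$ concretely. Writing $A \smallsetminus\{0_A\} = \{m_i\}$ and $c_i \defeq \pi(m_i) \in R$, the ideal $I_Z$ is generated by the binomials $\pi(m_i) m_j - \pi(m_j) m_i = c_i m_j - c_j m_i$ together with the monomials $m_i$ for which $c_i = 0_R$ (and, if $A$ is not torsion-free in the monoid sense, possibly the relations already present in $A$, but those tropicalize to relations in $A\otimes S$ and can be absorbed). Applying $\trop$ coefficient-wise gives the $S$-submodule of $A\otimes S$ generated by $\nu(c_i) m_j + \nu(c_j) m_i$ (and the monomials $m_i$ with $\nu(c_i) = 0_S$), using that $\nu$ is multiplicative and $\nu(-1) = 1_S$ so that $\nu(c_i m_j - c_j m_i) = \nu(c_i)m_j + \nu(c_j)m_i$ — this step is exactly Lemma \ref{lem:val} in spirit, turning a difference of valuations into a sum. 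Then $\bend$ of each such two-term generator $\nu(c_i)m_j + \nu(c_j)m_i$ imposes precisely the relation $\nu(c_i)m_j + \nu(c_j)m_i \sim \nu(c_i)m_j$ and $\sim \nu(c_j)m_i$, i.e. $\nu(c_i)m_j \sim \nu(c_j)m_i$ (once one also knows these terms are individually the whole function), which is exactly the relation $q(m_i)m_j \sim q(m_j)m_i$ needed to generate $\ker(q)$.

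So the two congruences $\bend\trop(I_Z)$ and $\ker(q)$ agree on the two-element relations, and the containment $\bend\trop(I_Z) \subseteq \ker(q)$ is clear since $q$ is a valuation-compatible point killing each generator. For the reverse containment I would invoke the structure of $\ker(q)$: for an $S$-algebra map to a \emph{totally ordered} semiring $S$, the kernel congruence is generated by the relations $\lambda m_i \sim \mu m_j$ whenever $q(\lambda m_i) = q(\mu m_j)$, and — using idempotency and total ordering exactly as in the proof of Theorem \ref{thm:double-dual} — these are all consequences of the pairwise relations $q(m_i) m_j \sim q(m_j) m_i$ together with $m_i \sim 0_S$ for $q(m_i) = 0_S$, which we have just shown lie in $\bend\trop(I_Z)$. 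The main obstacle, and the place where the total-order hypothesis on $S$ is genuinely used, is precisely this last point: over a merely partially ordered idempotent semiring the kernel of a point need not be generated by such two-term relations (this is the same phenomenon that makes Lemma \ref{lem:hyprecov}(1) and Theorem \ref{thm:double-dual} require total order), so I would carry out the comparison by an explicit "normal form" argument — reduce an arbitrary element of $\ker q$ modulo the pairwise relations to a canonical representative and check equality of $q$-values forces equality of representatives — mirroring the argument already given in the proof of Theorem \ref{thm:double-dual}.
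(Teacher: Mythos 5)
The overall plan — reduce to the affine case, identify the congruence $\ker(q)$ cutting out $\widetilde{\nu}(p)$, and compare it with $\bend\trop(I_Z)$ — is the right frame, but you have the two inclusions backwards in difficulty, and the inclusion you dismiss as ``clear'' is precisely where the proof's content (and the total-order hypothesis) lives.

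The gap is in the sentence claiming ``the containment $\bend\trop(I_Z) \subseteq \ker(q)$ is clear since $q$ is a valuation-compatible point killing each generator.'' The congruence $\bend\trop(I_Z)$ is \emph{not} generated by the bend relations of the tropicalized cross-binomials $\nu(c_i m_j - c_j m_i)$; by Lemma~\ref{units-lemma}(2) it is generated by the bend relations of $\nu(f)$ for \emph{every} $f\in I_Z$, and these $\nu(f)$ run over far more than the $S$-span of the tropicalized generators (valuation is not additive, so cancellations in $R$ produce new tropical polynomials). Thus the inclusion $\bend\trop(I_Z)\subseteq \ker(q)$ amounts to showing that $\widetilde{\nu}(p)$ lies in $\Bend(\nu(f))$ for every $f\in I_Z$, i.e.\ that $\nu(f)$ tropically vanishes at $\widetilde{\nu}(p)$. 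This is not formal: $q$ does \emph{not} kill $\nu(f)$ — indeed $\nu(f)(\widetilde{\nu}(p))\neq 0_S$ in general even though $f(p)=0_R$, and that discrepancy is the whole point. The argument the paper uses is exactly Lemma~\ref{lem:val}(1): if $S$ is totally ordered and a single term of $\nu(f)$ strictly dominated at $\widetilde{\nu}(p)$, then the valuation would commute with the sum of the terms of $f$, forcing $\nu(f(p)) = \nu(f)(\widetilde{\nu}(p))\ne 0_S$, contradicting $f(p)=0_R$. Your proposal never performs this step; the vague phrase ``valuation-compatible'' conceals it entirely. This is where the total-order hypothesis is genuinely used.

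Conversely, the direction you flag as the main obstacle — $\ker(q)\subseteq \bend\trop(I_Z)$ — is the easy one and does not need any total order or normal-form argument. For each monomial $m$ the element $m - p(m)\cdot 1 \in I_Z$ tropicalizes to $m + \nu(p(m))$, whose bend relations give $m\sim \nu(p(m))$ by transitivity through the two-term sum; these relations already generate $\ker(q)$, simply because the quotient $A\otimes S/\langle m\sim \nu(p(m))\rangle$ maps surjectively onto $S$ and the composite $S\to A\otimes S/\langle\cdots\rangle\to S$ is the identity. No appeal to Theorem~\ref{thm:double-dual} or a ``normal form modulo the pairwise relations'' is needed or relevant here — that theorem concerns duality for the bend relations of a \emph{single} $f$ and is not what makes this inclusion work. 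Once you swap the roles of the two inclusions and supply the Lemma~\ref{lem:val}(1) argument for the now-hard direction, your proposal becomes essentially the paper's proof.
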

\begin{proof}
  Locally, $X= \spec A$ with $A$ an integral $\Fun$-algebra; let $\{x_i\}_{i\in \Lambda}$ be a set of generators for the monoid $A$.
  The point $p$ is a multiplicative map $A\to R$ and is thus determined by the collection
  $\{p(x_i)\}_{i\in \Lambda}$ of elements of $R$; the corresponding subscheme $Z$ is defined by the
  ideal $I := (x_i - p(x_i))_{i\in \Lambda}$.  On the other hand, the point $\widetilde{\nu}(p) \in
  X(S)$ is determined by the family of elements $\widetilde{\nu}(p)(x_i) = \nu(p(x_i)) \in S$ and
  corresponds to the congruence $\langle x_i \sim \nu(p(x_i)) \rangle_{i \in \Lambda}$.  

  The ideal $\trop(I)$ (recall Proposition \ref{prop:trop-sends-ideals-to-ideals}) contains the elements $x_i - \nu(p(x_i)) \in A\otimes S$, and hence
  $\bend\trop(I)$ contains the relations $x_i \sim \nu(p(x_i))$ that define the point
  $\widetilde{\nu}(p)$ as a subscheme.  It only remains to show that there are no additional
  relations in $\bend\trop(I)$, i.e., if $f\in I$ then the bend relations of $\nu(f)$ are implied by the relations $x_i \sim \nu(p(x_i))$.  Since $A\otimes S/\langle x_i \sim \nu(p(x_i)) \rangle_{i \in \Lambda} \cong S$, it is equivalent to showing that the map $x_i \mapsto \nu(p(x_i))$ defines an $S$-point of $A\otimes S/\bend\trop(I)$.  Since \[\Hom(A\otimes S/\bend\trop(I), S) = \bigcap_{f\in I} \Hom(A\otimes S/\bend(\nu(f)), S),\] it suffices by Proposition \ref{prop:bend-locus} to show that each $\nu(f)$ tropically vanishes at $\widetilde{\nu}(p)$, in the sense of that proposition.  Since $\nu(f(p)) = \nu(0_R) = 0_S$ but $\nu(f)(\widetilde{\nu}(p)) \ne 0_S$, the valuation does not commute with taking the sum of the terms in $f$, so the result follows from Lemma \ref{lem:val}(2).
\end{proof}

\subsection{Relation to the Kajiwara-Payne extended tropicalization functor}

We now show that the above scheme-theoretic tropicalization recovers the Kajiwara-Payne extended
tropicalization functor \cite{Kajiwara, Payne} upon composition with $\Hom_{\Sch/\T}(\spec \T, -)$.

Let $X$ be a toric variety over $\Fun$, and let $k$ be an algebraically closed field equipped with a
non-trivial valuation $\nu: k\to \T$.  The Kajiwara-Payne extended tropicalization is a map
$\trop_X$ that sends subvarieties of $X_k$ to subsets of $X(\T)$ by sending $Z \subset X_k$ to the
Euclidean closure of the image of $Z(k)$ under the map $\widetilde{\nu}: X(k) \to X(\T)$.  This
extends the usual set-theoretic tropicalization map from tori or affine spaces to toric varieties.

\begin{theorem}\label{thm:KP}
  The set of $\T$-points of $\Trop_X(Z)$ coincides with $\trop_X(Z)$ as a subset of $X(\T)$.
\end{theorem}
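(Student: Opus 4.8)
The plan is to reduce Theorem~\ref{thm:KP} to the case where $X$ is a single torus, where it becomes the Fundamental Theorem of Tropical Geometry combined with the computation of the $\T$-points of a bend locus from Proposition~\ref{prop:bend-locus}.

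First I would exploit the torus-orbit stratification. Recall that $X(\T)=\bigsqcup_{\sigma\in\Delta}O(\sigma)(\T)$, the disjoint union of the $\T$-points of the torus orbits, and that each $O(\sigma)$ is itself a torus over $\Fun$ (of dimension $\dim N-\dim\sigma$). By construction, the Kajiwara--Payne map \cite{Kajiwara,Payne} respects this stratification: $\trop_X(Z)\cap O(\sigma)(\T)$ equals the ordinary tropicalization $\trop_{O(\sigma)}(Z\cap O(\sigma))$ of the subvariety $Z\cap O(\sigma)$ of the torus $O(\sigma)$. On the scheme side, each orbit $O(\sigma)\subset X$ is a locally closed integral subscheme which, inside a suitable affine toric chart, is cut out by relations of the form $x^m\sim 0$; hence Lemma~\ref{lem:restriction-of-trop} applies and gives $\Trop_X(Z)\cap O(\sigma)_\T=\Trop_{O(\sigma)}(Z\cap O(\sigma)_k)$ as closed subschemes of $O(\sigma)_\T$. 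Taking $\T$-points---which commutes with this intersection, since $O(\sigma)_\T\hookrightarrow X_\T$ and $\Trop_X(Z)\hookrightarrow X_\T$ are closed immersions---would reduce the theorem to the assertion that $\Trop_T(Z)(\T)=\trop_T(Z)$ for a torus $T$ over $\Fun$ and a closed subscheme $Z\subset T_k$.

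In this reduced setting, I would identify $T(\T)=\Hom(M,\T^\times)=N_\R=\R^n$ as in the classical picture, and let $I\subset\Fun[M]\otimes k$ be the ideal of $Z$. By Proposition~\ref{prop:intersection-of-bend-loci}, $\Trop_T(Z)=\bigcap_{f\in I}\Bend(\nu(f))$, and since a homomorphism to $\T$ factors through the quotient by a union of congruences if and only if it factors through the quotient by each, $\Trop_T(Z)(\T)=\bigcap_{f\in I}\Bend(\nu(f))(\T)$. By Proposition~\ref{prop:bend-locus}, $\Bend(\nu(f))(\T)$ is exactly the set of $p\in\R^n$ at which the piecewise-linear function defined by the tropical Laurent polynomial $\nu(f)$ is non-linear, i.e.\ the classical tropical hypersurface of $\nu(f)$ (the ``$f(p)=0_\T$'' alternative of Proposition~\ref{prop:bend-locus}(1) cannot occur on the torus, where all coefficients of $\nu(f)$ are finite). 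So $\Trop_T(Z)(\T)$ is the intersection of the tropical hypersurfaces of $\nu(f)$ over $f\in I$, and I would finish by invoking the Fundamental Theorem of Tropical Geometry \cite[Theorem~3.2.4]{MS} (Kapranov's theorem in the hypersurface case): since $k$ is algebraically closed and the valuation is nontrivial, this intersection coincides with the Euclidean closure in $\R^n$ of $\widetilde{\nu}(Z(k))$, which is by definition $\trop_T(Z)$.

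The main obstacle is the bookkeeping in the first step. One must verify that torus orbits really are cut out $\Fun$-locally by monomial-to-zero relations, so that Lemma~\ref{lem:restriction-of-trop} applies verbatim, and---on the Kajiwara--Payne side---one needs the standard but not purely formal fact that the Euclidean closure defining $\trop_X(Z)$ meets each stratum $O(\sigma)(\T)$ in precisely $\trop_{O(\sigma)}(Z\cap O(\sigma))$, acquiring no spurious points in smaller strata. (Alternatively, one could bypass the second issue by taking that stratum-wise description as the definition of the extended tropicalization.) Everything after the reduction to a torus is immediate from Propositions~\ref{prop:bend-locus} and~\ref{prop:intersection-of-bend-loci} together with the Fundamental Theorem.
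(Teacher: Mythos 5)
Your proposal is correct and follows essentially the same route as the paper: reduce via the torus-orbit stratification using Payne's Proposition~3.4 for the set-theoretic side and Lemma~\ref{lem:restriction-of-trop} for the scheme-theoretic side, then in each torus stratum apply Propositions~\ref{prop:bend-locus} and~\ref{prop:intersection-of-bend-loci} together with the Fundamental Theorem of tropical geometry. The paper's proof is just a compressed version of your argument, and your ``main obstacle'' (that the Euclidean closure meets each stratum in exactly the tropicalization of the stratum-intersection) is precisely what the citation to Payne's Proposition~3.4 handles.
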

\begin{proof}
  By \cite[Prop. 3.4]{Payne}, the set-theoretic tropicalization can be computed stratum by stratum.
  I.e., if $W$ is a torus orbit in $X$ then $\trop_X(Z)\cap W(\T) = \trop_{W}(Z \cap
  W_k)$.  By the Fundamental Theorem of tropical geometry \cite[Theorem 3.2.4]{MS} (a.k.a. Kapranov's
  Theorem in the case of a hypersurface), $\trop_{W}(Z \cap W_k)$ is the subset of points in
  $W(\T)\cong \R^n$ where the graph of each nonzero function in the ideal defining $Z\cap W_k$ is
  nonlinear.  By Proposition \ref{prop:bend-locus} and Lemma \ref{lem:restriction-of-trop}, this is
  equal to the set of $\T$-points of $\Trop_X(Z) \cap W_\T$.
\end{proof}

\subsection{Functoriality of tropicalization}
We now examine the functoriality properties of the scheme-theoretic tropicalization map
$\Trop^\nu_X$.  We show that it is functorial in $X$ in the sense described below, and under certain
additional hypotheses it is functorial in the valuation $\nu$.

For a (semi)ring $R$, let $\mathcal{P}(R)$ denote the category of pairs
\[
(X \text{ a locally integral $\Fun$-scheme}, \:Z\subset X_R \text{ a closed
  subscheme}),
\] 
where a morphism $(X,Z) \to (X', Z')$ is an $\Fun$-morphism $\Phi: X \to X'$ such that $\Phi_R(Z) \subset Z'$.

\begin{proposition}\label{prop:trop-functoriality}
  The tropicalization maps $\{\Trop^\nu_X\}$ determine a functor $\Trop^\nu:
  \mathcal{P}(R) \to \mathcal{P}(S)$ sending $(X,Z)$ to $(X,\Trop^\nu_X(Z))$.
\end{proposition}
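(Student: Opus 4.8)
The functor acts as the identity on $\Fun$-morphisms, so composition and identities are respected automatically; the only thing to prove is that $\Trop^\nu$ is \emph{defined} on morphisms, i.e.\ that $\Phi(Z)\subset Z'$ forces $(\Phi\otimes S)(\Trop^\nu_X(Z))\subset\Trop^\nu_{X'}(Z')$ as closed subschemes of $X'_S$. Both sides are cut out by quasi-coherent congruence sheaves whose formation commutes with monomial localization (Lemmas~\ref{lem:trop-localization-compatibility} and \ref{qc-localization-lemma}), so it suffices to work over a pair of affine charts $\spec A\subset X$ and $\spec A'\subset X'$ with $A,A'$ integral. There $\Phi$ is an $\Fun$-algebra morphism $\psi\colon A'\to A$, inducing $\psi_R\colon A'\otimes R\to A\otimes R$ and $\psi_S\colon A'\otimes S\to A\otimes S$; writing $Z,Z'$ for the closed subschemes of ideals $I\subset A\otimes R$ and $I'\subset A'\otimes R$, the hypothesis $\Phi(Z)\subset Z'$ amounts to $\psi_R(I')\subset I$, and we must show $(\psi_S)_*\bend\trop(I')\subset\bend\trop(I)$.

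Since $\trop(I')$ is generated as an $S$-module by $\{\nu(g')\}_{g'\in I'}$, Lemma~\ref{units-lemma}(2) reduces this to $(\psi_S)_*\bend(\nu(g'))\subset\bend\trop(I)$ for a single $g'\in I'$. Set $h':=\psi_R(g')\in I$, so $\nu(h')\in\trop(I)$ and hence $\bend(\nu(h'))\subset\bend\trop(I)$; it is then enough to prove the purely local statement $(\psi_S)_*\bend(\nu(g'))\subset\bend(\nu(h'))$. When $\psi$ is injective on $\supp(g')$ one has $\psi_S(\nu(g'))=\nu(h')$, so the claim follows from Lemma~\ref{functoriality-lemma1}(1). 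The content is the case where $\psi$ identifies monomials of $\supp(g')$: coefficient-wise valuation does not commute with $\psi$, since identified coefficients may cancel in $R$, and so on each target monomial $n$ the coefficient of $\psi_S(\nu(g'))$ is the semiring sum of the valuations over the $\psi$-fibre above $n$ whereas that of $\nu(h')$ is the valuation of the $R$-sum of those coefficients, which by subadditivity is weakly smaller.

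I would handle this by examining the generators $\psi_S(\nu(g'))\sim\psi_S(\nu(g')_{\widehat a})$ of $(\psi_S)_*\bend(\nu(g'))$ one $\psi$-fibre at a time. Fix $a\in\supp(\nu(g'))$ and let $n=\psi(a)$. If $\psi(a)=0_A$, or if the top valuation in the fibre over $n$ is attained by at least two monomials, then deleting $a$ leaves $\psi_S(\nu(g'))$ unchanged and the relation is diagonal, hence in every congruence. Otherwise $a$ is the unique maximiser in its fibre, and Lemma~\ref{lem:val}(1) shows there is no cancellation among the top-valuation terms, so $n\in\supp(\nu(h'))$ with the same coefficient as in $\psi_S(\nu(g'))$. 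Adding the remaining terms of $\psi_S(\nu(g'))$ to the $n$-deletion bend relation of $\nu(h')$ and absorbing the lower-order terms of $\nu(h')$ by idempotency gives $\psi_S(\nu(g'))\sim\psi_S(\nu(g'))_{\widehat n}$ inside $\bend(\nu(h'))$; adding to this same relation the (strictly smaller) coefficient that $\psi_S(\nu(g')_{\widehat a})$ carries on $n$, and using idempotency once more, turns it into precisely $\psi_S(\nu(g'))\sim\psi_S(\nu(g')_{\widehat a})$. Reassembling the charts via Lemmas~\ref{lem:trop-localization-compatibility} and \ref{qc-localization-lemma} (with Proposition~\ref{prop:trop-sends-ideals-to-ideals} guaranteeing the $\trop(I)$ are ideals) gives the global inclusion; alternatively one can run the argument through the description $\Trop=\bigcap_{g'}\Bend(\nu(g'))$ of Proposition~\ref{prop:intersection-of-bend-loci}.

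The delicate point is exactly this non-commutation of $\nu$ with the monomial-collapsing map $\psi$: making the absorption bookkeeping above fully rigorous, and in particular verifying that the appeal to Lemma~\ref{lem:val}(1) still goes through when $S$ carries only a partial order — where ``unique maximiser'' must be interpreted via the canonical order of \S\ref{section:semirings} and one may need to substitute Lemma~\ref{lem:val}(2) — is where the real care is needed. The reductions to affine charts and to single elements $g'$, and the final gluing, are formal given Lemmas~\ref{units-lemma}, \ref{functoriality-lemma1}, \ref{lem:trop-localization-compatibility} and \ref{qc-localization-lemma} together with Proposition~\ref{prop:trop-sends-ideals-to-ideals}.
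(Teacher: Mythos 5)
Your reductions---to affine charts, and then (via Lemma~\ref{units-lemma}(2)) to the single-element inclusion $\varphi_*\bend(\nu(f))\subset\bend(\nu(\varphi(f)))$---are precisely the paper's. The divergence is in how you get each generating relation $\varphi(\nu(f))\sim\varphi(\nu(f)_{\widehat a})$ from the bend relation $\nu(\varphi(f))\sim\nu(\varphi(f))_{\widehat{\varphi(a)}}$. You case-split on whether $a$ is the unique maximiser of the valuations over its $\varphi$-fibre, invoking Lemma~\ref{lem:val}(1) in the strict case and observing that otherwise deletion is a no-op. That works when $S$ is totally ordered, but the proposition allows $\nu:R\to S$ with $S$ an arbitrary idempotent semiring, and there the dichotomy ``unique maximiser / maximum attained at least twice'' is not exhaustive: the fibre valuations $\nu(a_\ell)$ may be pairwise incomparable, so their semiring sum (a least upper bound, not a maximum) need not equal any single $\nu(a_\ell)$, and deleting $a$ need not leave $\psi_S(\nu(g'))$ unchanged even when $a$ is not a strict maximiser. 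You flag this at the end and gesture at substituting Lemma~\ref{lem:val}(2), but you leave it unresolved, and it is not a small interpretive tweak---your two-stage absorption hinges on the total order in both stages (identifying $n\in\supp(\nu(h'))$ with the right coefficient, and adding back a ``strictly smaller'' coefficient).

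The paper avoids the case analysis entirely. It adds the \emph{target} right-hand side $\varphi(\nu(f)_{\widehat a})$ (not the $n$-deleted $\psi_S(\nu(g'))_{\widehat n}$) to both sides of $\nu(\varphi(f))\sim\nu(\varphi(f))_{\widehat{\varphi(a)}}$ and then verifies equality of the resulting coefficients monomial by monomial: subadditivity of $\nu$ handles every $m\neq\varphi(a)$, and Lemma~\ref{lem:val}(2) handles the coefficient at $\varphi(a)$. Neither step uses an order. So the two approaches differ concretely in the padding used (target RHS in one step, versus full $n$-deletion plus a second add-back in yours) and in whether a case split is needed. To make your argument cover the stated generality, replace the case split and double absorption with this single ``pad with $\varphi(\nu(f)_{\widehat a})$ and compare coefficients via subadditivity and Lemma~\ref{lem:val}(2)'' move; as written, your proof establishes the proposition only under a total-order hypothesis on $S$.
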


\begin{proof}
  Given an arrow $(X,Z)\to (X',Z')$ in $\mathcal{P}(R)$, we must show that $\Phi_S(\Trop_X(Z))\subset
  \Trop_{X'}(Z')$.  It suffices to show this in the affine case: $X=\spec A$, $X'=\spec A'$, the map
  $\Phi$ is given by a monoid homomorphism $\varphi : A' \to A$, and $Z$ and $Z'$ are given by
  ideals $I \subset A\otimes R$ and $I' \subset A'\otimes R$ with $\varphi_R(I')\subset I$.  The claim
  is now that $(\varphi_S)_* \bend\trop(I') \subset \bend\trop(I)$, and for this it suffices to show that $
  (\varphi_S)_*\bend(\nu(f)) \subset \bend(\nu(\varphi_R(f)))$ for any $f\in A'\otimes R$.  In fact, we
  will show that each generating relation
  \begin{equation}\label{rel1}
  \varphi_S(\nu(f)) \sim \varphi_S(\nu(f)_{\widehat{i}})
  \end{equation}
  of $(\varphi_S)_* \bend(\nu(f))$ is implied by the corresponding relation
  \begin{equation}\label{rel2}
  \nu(\varphi_R(f)) \sim \nu(\varphi_R(f))_{\widehat{\varphi(i)}}
  \end{equation}
  in $\bend(\nu(\varphi_R(f)))$ by adding the RHS of \eqref{rel1} to both sides.  We show this by
  comparing coefficients term-by-term.  For $\ell \in \supp(f)$, let $a_\ell \in R$ denote the
  coefficient of $\ell$.  For each $m \in \supp(\varphi_R(f))$ with $m\neq \varphi(i)$, the
  coefficients of $m$ on boths sides of \eqref{rel1} are equal to
\begin{equation}
\label{coefs1}
\sum_{\ell \in \varphi^{-1}(m)} \nu(a_\ell).
\end{equation}
The coefficients of $m$ on either side in \eqref{rel2} are both equal to
\begin{equation}
\label{coefs2}
\nu \left( \sum_{\ell \in \varphi^{-1}(m)} a_\ell \right).
\end{equation}
By the subadditivity property of the valuation, adding \eqref{coefs2} to \eqref{coefs1} yields \eqref{coefs1}.

We now examine the coefficients of $\varphi(i)$ in \eqref{rel1} and
\eqref{rel2}; they are, respectively, 
\begin{align}
\label{coefs3}
\sum_{\ell \in \varphi^{-1}(\varphi(i))} \nu(a_\ell), & &   
\sum_{\ell \in \varphi^{-1}(\varphi(i))  \smallsetminus \{i\}} \nu(a_\ell) \\
\textit{(LHS)} & & \textit{(RHS)} \nonumber
\end{align}
and
\begin{align}
\label{coefs4}
\nu \left( \sum_{\ell \in \varphi^{-1}(\varphi(i))} a_\ell \right) & &
0_S.\\ \textit{(LHS)} & & \textit{(RHS)} \nonumber
\end{align}
By Lemma \ref{lem:val} part (1), adding the RHS of \eqref{coefs3} to both sides of
\eqref{coefs4} yields \eqref{coefs3}.
\end{proof}

\begin{remark}\label{rem:sign-required}
The sign condition in Definition \ref{def:valuation} is necessary for the above proposition to
hold.  For example, suppose $f= \ell_1 - \ell_2$, $\varphi$ maps the two monomials
$\ell_1$ and $\ell_2$ in the support of $f$ to the same monomial.  When $i=\ell_2$ Then \eqref{coefs3} becomes
$\nu(1_R) + \nu(-1_R) \sim \nu(1_R)$, and \eqref{coefs4} is $\nu(0_R) \sim 0_s$.  On the other hand,
taking $i=\ell_1$, \eqref{coefs3} now gives $\nu(1_R) + \nu(-1_R) \sim \nu(-1_R)$.  Thus it must be
the case that $\nu(-1_R)=\nu(1_R)$.
\end{remark}

We now turn to the dependence on $\nu$.
\begin{proposition}\label{prop:val-functoriality}
 Let $\nu: R\to S$ be a valuation and $\varphi: S \to T$ a map of semirings.  Then 
\[
\Trop_X^{\varphi\circ\nu} (Z) = \Trop_X^{\nu}(Z) \times_{\spec S} \spec T
\]
as subschemes of $X_{T}$.
\end{proposition}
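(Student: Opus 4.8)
The plan is to reduce to the affine case and then to a purely algebraic identity of congruences, namely that forming the bend congruence of a tropicalized ideal commutes with base change of the value semiring. Since closed subschemes of $X_S$ and of $X_T$ are determined by quasi-coherent congruence sheaves, which are built by gluing affine data, and since base change along $\spec T\to\spec S$ is compatible with restriction to the affine charts of the $\Fun$-scheme $X$, it suffices to treat $X=\spec A$ with $A$ an integral $\Fun$-algebra and $Z$ corresponding to an ideal $I\subset A\otimes R$ (integrality guaranteeing, via Proposition~\ref{prop:trop-sends-ideals-to-ideals}, that the relevant tropicalized modules are ideals). Write $\psi\defeq\id_A\otimes\varphi\co A\otimes S\to A\otimes T$ for the induced map on coordinate algebras, and $\psi_*$ for the congruence it generates from a given congruence on $A\otimes S$.

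Next I would record the scheme-level translation. By the affine definition of $\Trop$, the subscheme $\Trop_X^\nu(Z)\subset X_S$ is $\spec$ of $A\otimes S/\bend\trop^\nu(I)$, so its base change along $\spec T\to\spec S$ is $\spec$ of $(A\otimes S/\bend\trop^\nu(I))\otimes_S T$. Because $-\otimes_S T$ is a left adjoint, hence preserves colimits, and quotienting by a congruence is a colimit, this tensor product is canonically isomorphic to $(A\otimes T)/\psi_*\bigl(\bend\trop^\nu(I)\bigr)$. Thus the proposition is equivalent to the identity of congruences on $A\otimes T$
\[
\bend\trop^{\varphi\circ\nu}(I)\;=\;\psi_*\bigl(\bend\trop^\nu(I)\bigr).
\]

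I would then prove this identity in two easy steps. (i) \emph{Compatibility of $\trop$ with $\psi$}: the $S$-submodule $\trop^\nu(I)$ is generated by $\{\nu(f)\}_{f\in I}$, so $\psi$ of this generating set is $\{(\varphi\circ\nu)(f)\}_{f\in I}$, which by definition generates the $T$-submodule $\trop^{\varphi\circ\nu}(I)$; in particular $\psi(\trop^\nu(I))$ also generates $\trop^{\varphi\circ\nu}(I)$. (ii) \emph{Compatibility of $\bend$ with $\psi$}: for $g\in A\otimes S$ and $a\in A$, applying $\psi$ to the bend relation $g\sim g_{\widehat a}$ gives $\psi(g)\sim\psi(g)_{\widehat a}$, a bend relation of $\psi(g)$ — and if $\varphi$ sends the coefficient of $a$ in $g$ to $0_T$ then $a\notin\supp(\psi(g))$ and both sides are already the trivial relation $\psi(g)=\psi(g)$, so there is nothing to check there. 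Hence $\psi$ carries the bend relations of a generating family of $\trop^\nu(I)$ onto bend relations of a generating family of $\trop^{\varphi\circ\nu}(I)$; combining this with Lemma~\ref{units-lemma}(2) (the bend congruence of a submodule is generated by the bend relations of any generating set) shows that $\psi_*\bend\trop^\nu(I)$ and $\bend\trop^{\varphi\circ\nu}(I)$ are each the congruence on $A\otimes T$ generated by exactly the same set of relations. This establishes the displayed identity and hence the affine case.

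Finally, to globalize one checks that these affine-local identifications agree on overlaps: this is immediate from Lemmas~\ref{qc-localization-lemma} and~\ref{lem:trop-localization-compatibility} (formation of $\trop$ and of $\bend$ commutes with monomial localization) together with the fact that base change commutes with localization, so the congruence sheaves defining the two sides glue to the same closed subscheme of $X_T$. I expect the only genuinely delicate point to be the step in the second paragraph — verifying that quotient by a congruence commutes with semiring base change, i.e.\ the canonical isomorphism $(A\otimes S/J)\otimes_S T\cong(A\otimes T)/\psi_*J$; once that is in hand, the remainder is bookkeeping with generating sets of modules and congruences.
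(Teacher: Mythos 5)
Your argument is correct and follows essentially the same route as the paper: reduce to an affine chart, use that quotienting by a congruence commutes with base change of the value semiring (so that $(A\otimes S/K)\otimes_S T\cong (A\otimes T)/\psi_*K$), and then identify $\psi_*\bend\trop^\nu(I)$ with $\bend\trop^{\varphi\circ\nu}(I)$. The paper states that last identification as a one-line ``observation,'' whereas you spell it out via generating sets and Lemma~\ref{units-lemma}(2); that is a helpful expansion, not a different method.
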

\begin{proof}
  It suffices to prove this in the case $X$ is affine, so assume $X=\spec A$ for some integral
  $\Fun$-algebra $A$, and let $I\subset A\otimes R$ be the ideal defining $Z\subset X$.  Given a
  module congruence $K$ on $A \otimes S$, the canonical isomorphism $(A\otimes S)\otimes_S T \cong A
  \otimes T$ descends to an isomorphism
\[
\big( A\otimes S / K \big) \otimes_S T \cong A\otimes T / \varphi_* K.
\]
The claim follows from this by taking $K=\bend\trop^\nu(I)$ and observing that
$\varphi_*\bend\trop^\nu(I) = \bend\trop^{\varphi\circ \nu}(I)$.
\end{proof}

\subsection{Moduli of valuations and families of tropicalizations}\label{sec:moduli}

Let $\val(R) := \spec S^R_{univ}$ be the affine $\B$-scheme corresponding to the semiring
of values associated with the universal valuation on $R$ defined in \S\ref{sec:valuations}.
By Proposition \ref{val-univ-property}, $\val(R)$ represents the functor on affine $\B$-schemes,
\[
\spec S \mapsto \{\text{valuations $R\to S$}\}.
\]
Thus $\val(R)$ is the moduli scheme of valuations on $R$.  This is a refinement of the observation
of Manon \cite{Manon} that the set of all valuations with semiring of values $\T$ forms a fan.  In
particular, the $\T$-points of $\val(R)$ are the usual non-archimedean valuations on $R$.

As a special case of Proposition \ref{prop:val-functoriality} we have the following (Theorem
\ref{thm:ValFamily} part (1) from the introduction).
\begin{theorem}\label{thm:valfamily1}
  Given a locally integral $\Fun$-scheme $X$, a ring $R$, and a subscheme $Z\subset X_R$, the
  tropicalization of $Z$ with respect to the universal valuation, $\Trop^{\nu^R_{univ}}_X(Z)$, forms
  an algebraic family of $\B$-schemes over $\val(R)$ such that the fiber over each valuation $\nu$
  is $\Trop_X^\nu(Z)$.
\end{theorem}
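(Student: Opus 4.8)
The plan is to deduce the theorem directly from Proposition \ref{prop:val-functoriality} together with the universal property of $\nu^R_{univ}$ established in Proposition \ref{val-univ-property}; essentially no new work is needed beyond unwinding definitions. First I would record the family structure: since $\nu^R_{univ}\co R \to S^R_{univ} = \Gamma(\val(R), \mathcal{O}_{\val(R)})$ is a valuation, the scheme-theoretic tropicalization $\Trop^{\nu^R_{univ}}_X(Z)$ is by construction a closed subscheme of the base change $X_{S^R_{univ}}$; locally on an affine patch $\spec A \subset X$ with $Z$ cut out by an ideal $I \subset A\otimes R$, it is $\spec$ of the quotient $A \otimes S^R_{univ}/\bend\trop^{\nu^R_{univ}}(I)$, which is an $S^R_{univ}$-algebra. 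Hence $\Trop^{\nu^R_{univ}}_X(Z)$ is an $S^R_{univ}$-scheme, and by Proposition \ref{prop:over-cat-equiv} this is the same as a scheme over $\spec S^R_{univ} = \val(R)$; the resulting structure morphism is the asserted algebraic family.

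Next I would identify the fibres. A valuation $\nu\co R \to S$ corresponds, by Proposition \ref{val-univ-property}, to a unique semiring homomorphism $\phi\co S^R_{univ} \to S$ with $\phi \circ \nu^R_{univ} = \nu$, i.e.\ to a morphism $\spec S \to \val(R)$. The fibre of the family over this point is the base change
\[
\Trop^{\nu^R_{univ}}_X(Z) \times_{\val(R)} \spec S \;=\; \Trop^{\nu^R_{univ}}_X(Z) \times_{\spec S^R_{univ}} \spec S ,
\]
and applying Proposition \ref{prop:val-functoriality} to the valuation $\nu^R_{univ}$ together with $\phi$ identifies the right-hand side with $\Trop_X^{\phi \circ \nu^R_{univ}}(Z) = \Trop_X^{\nu}(Z)$. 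Taking $S = \T$ recovers in particular the fibres over the classical non-archimedean valuations constituting $\val(R)(\T) = (\spec R)^{an}$.

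The one point deserving a line of care — more bookkeeping than a genuine obstacle, since the hard content is already contained in Proposition \ref{prop:val-functoriality} — is that the fibre product above is again a closed subscheme of $X_S$, not merely a scheme mapping to it. This holds because $-\otimes_{S^R_{univ}} S$ is right exact, so it carries the surjection $A\otimes S^R_{univ} \twoheadrightarrow A\otimes S^R_{univ}/\bend\trop^{\nu^R_{univ}}(I)$ to a surjection with target $A\otimes S/\bend\trop^{\nu}(I)$; this is exactly the isomorphism of $S$-algebras invoked in the proof of Proposition \ref{prop:val-functoriality}, which records that $\phi_*\bend\trop^{\nu^R_{univ}}(I) = \bend\trop^{\phi\circ\nu^R_{univ}}(I) = \bend\trop^{\nu}(I)$. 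Since all of these constructions are made affine-locally and commute with the monomial localizations used to glue the affine patches of $X$ (Lemmas \ref{qc-localization-lemma} and \ref{lem:trop-localization-compatibility}), the local statements assemble to the claimed global one.
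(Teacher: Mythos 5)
Your proof is correct and takes exactly the route the paper does: the paper observes (just above the theorem statement) that this result ``is a special case of Proposition \ref{prop:val-functoriality},'' and your argument is precisely the unwinding of that observation via Proposition \ref{val-univ-property}. The extra bookkeeping you supply about the fibre product remaining a closed subscheme is a faithful elaboration of what is implicit in the proof of Proposition \ref{prop:val-functoriality}.
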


\subsection{Compatibility with Cox's quotient construction}

Let $X=X^\Delta$ be a toric scheme over $\F_1$ and recall (\S\ref{sec:Cox}) that $X = U/G$, where
$U\subset \mathbb{A}^{\Delta(1)}$ is the complement of the vanishing of the irrelevant ideal and $G
= \Hom(\Cl(X),\Z)$.  A homogeneous ideal $I \subset \Cox(X_R) = R[x_\rho~|~\rho\in\Delta(1)]$
determines a closed subscheme $Z \subset X_R$, and if $\Delta$ is simplicial then every closed
subscheme arises in this way \cite[Theorem 3.7]{Cox}.  The scheme $Z$ is the categorical quotient of
the $G$-invariant locally closed subscheme $\widetilde{Z} \cap U_R \subset
\mathbb{A}^{\Delta(1)}_R$, where $\widetilde{Z} := V(I)$.  In other words, we have
\[
Z  = (\widetilde{Z} \cap U_R)/G \subset U_R/G = X_R.
\]

\begin{theorem}\label{thm:Coxcomp}
Tropicalization commutes with the Cox quotient: \[\Trop_X(Z) = (\Trop_{\mathbb{A}^{\Delta(1)}}(\widetilde{Z}) \cap U_S)/G \subset U_S/G = X_S.\] 
\end{theorem}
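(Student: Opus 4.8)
The plan is to verify the asserted equality of closed subschemes of $X_S$ on the standard toric affine cover and thereby reduce it to an identity of congruences. Recall from \S\ref{sec:Cox} (and the proof of Proposition \ref{prop:cox-compat}) that, writing $x_{\widehat{\sigma}} := \prod_{\rho\notin\sigma(1)}x_\rho$, the scheme $U_S$ is covered by the $G$-invariant affine charts $U_\sigma := \spec\Cox(X_S)[x_{\widehat{\sigma}}^{-1}]$, that $\Cox(X)[x_{\widehat{\sigma}}^{-1}]^G = \Cox(X)[x_{\widehat{\sigma}}^{-1}]_0 = \sigma^\vee\cap M =: A_\sigma$, and that the Cox quotient map restricts to $U_\sigma \to X_\sigma := \spec(A_\sigma\otimes S)$, given on coordinate algebras by the inclusion of the degree-zero part; the $X_\sigma$ cover $X_S$. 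Since both sides of the claimed identity are closed subschemes of $X_S$, it suffices to compare them over each $X_\sigma$.

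First I would identify the left-hand side. Restricting $Z = (\widetilde{Z}\cap U_R)/G$ to $X_\sigma$ shows that $Z\cap(X_\sigma)_R$ is the closed subscheme of $\spec(A_\sigma\otimes R)$ cut out by the degree-zero ideal $I_\sigma := (I[x_{\widehat{\sigma}}^{-1}])_0$. Because $X_\sigma\subset X$ is an open subscheme already defined at the $\F_1$ level, Lemmas \ref{qc-localization-lemma} and \ref{lem:trop-localization-compatibility} (as used in the proof of Lemma \ref{lem:restriction-of-trop}) give $\Trop_X(Z)\cap(X_\sigma)_S = \Trop_{X_\sigma}(Z\cap(X_\sigma)_R)$, so this chart of $\Trop_X(Z)$ is defined by the semiring congruence $\bend\trop(I_\sigma)$ on $A_\sigma\otimes S$. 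For the right-hand side, Proposition \ref{prop:intersection-of-bend-loci} describes $\Trop_{\mathbb{A}^{\Delta(1)}}(\widetilde{Z})$ as the subscheme cut out by $\bend\trop(I)$, and restricting to $U_\sigma$ and applying Lemmas \ref{qc-localization-lemma} and \ref{lem:trop-localization-compatibility} yields the $\Cl(X)$-homogeneous congruence $\bend\trop(I[x_{\widehat{\sigma}}^{-1}])$ on $\Cox(X_S)[x_{\widehat{\sigma}}^{-1}]$. As in the proof of Proposition \ref{prop:cox-compat}, passing to the categorical quotient by $G$ amounts on this chart to passing to the degree-zero part: for a $\Cl(X)$-homogeneous congruence $K$ on a $\Cl(X)$-graded algebra $B=\Cox(X_S)[x_{\widehat{\sigma}}^{-1}]$ one has $(B/K)_0 = B_0/K_0 = (A_\sigma\otimes S)/K_0$. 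Hence the theorem reduces to the identity of congruences on $A_\sigma\otimes S$
\[
\bend\trop(I_\sigma) \;=\; \bend\trop\bigl(I[x_{\widehat{\sigma}}^{-1}]\bigr)_0 .
\]

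I would prove this in two steps. Step (i): $\trop(I_\sigma) = \trop(I[x_{\widehat{\sigma}}^{-1}])_0$ as ideals in $A_\sigma\otimes S$. Writing an element of $I_\sigma$ over a common denominator as $x_{\widehat{\sigma}}^{-N}f$ with $f\in I$ homogeneous of degree $N\deg(x_{\widehat{\sigma}})$, integrality of monomials gives $\nu(x_{\widehat{\sigma}}^{-N}f) = x_{\widehat{\sigma}}^{-N}\nu(f)$; since $\nu$ fixes monomials it preserves the $\Cl(X)$-grading, so — together with Lemma \ref{lem:trop-localization-compatibility} — both ideals are the $S$-span of $\{x_{\widehat{\sigma}}^{-N}\nu(f)\}$ over homogeneous $f\in I$ with $\deg f = N\deg(x_{\widehat{\sigma}})$. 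Step (ii): for a $\Cl(X)$-homogeneous ideal $J$ (here $J=\trop(I[x_{\widehat{\sigma}}^{-1}])$, with $J_0 = \trop(I_\sigma)$ by (i)), one has $\bend(J)_0 = \bend(J_0)$. The inclusion $\supseteq$ is immediate, since each bend relation $g\sim g_{\widehat{a}}$ with $g\in J$ homogeneous of degree zero is a degree-zero relation lying in $\bend(J)$. For $\subseteq$ one uses the concrete description of $\bend(J)$ from Lemma \ref{lem:CongGen} as the transitive closure of a sub-semiring of $(A_\sigma\otimes S)[x_{\widehat{\sigma}}^{-1}]\times(\cdots)$ generated by the homogeneous elements $(g,g_{\widehat{a}})$ (for $g\in J$ homogeneous), their flips, and the diagonal, and one extracts degree-zero components throughout. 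Combining (i), (ii), and Lemma \ref{qc-localization-lemma} then yields the displayed identity, completing the proof.

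The main obstacle is Step (ii): the compatibility of a bend congruence with the grading. Since $\bend(J)$ is a \emph{semiring} congruence, obtained by transitive closure of a multiplicatively closed subset, the intermediate terms of a transitivity chain joining two degree-zero elements need not themselves be homogeneous, so one must check that replacing each term of such a chain by its degree-zero component again gives a chain of relations lying in $\bend(J_0)$; this works because every semiring generator together with the diagonal is $\Cl(X)$-homogeneous and every scalar in $S$ has $\Cl(X)$-degree zero, but it requires careful bookkeeping. Precisely this bookkeeping occurs, for a single section, inside the proof of Proposition \ref{prop:cox-compat}; the only new content here is to carry it out for an entire homogeneous congruence. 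Everything else in the argument is a formal assembly of the localization and functoriality statements (Lemmas \ref{qc-localization-lemma}, \ref{lem:trop-localization-compatibility}, \ref{functoriality-lemma1}, \ref{units-lemma}) established earlier.
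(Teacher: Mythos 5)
Your proposal follows essentially the same route the paper takes: cover $X$ by the affine charts $X_\sigma$, restrict to a chart, use localization-compatibility (Lemmas \ref{qc-localization-lemma}, \ref{lem:trop-localization-compatibility}, \ref{lem:restriction-of-trop}) to express both sides over $X_\sigma$ in terms of congruences on $\Cox(X_S)[x_{\widehat{\sigma}}^{-1}]_0$, and reduce the theorem to the identity $\bend\trop(I_\sigma) = \bend\trop(I[x_{\widehat{\sigma}}^{-1}])_0$. The paper compresses this into one sentence --- ``since the valuation preserves degree and taking quotients commutes with taking degree zero part'' --- which corresponds exactly to your Steps (i) and (ii) respectively; you have correctly decoupled the two claims. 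Your instinct that Step (ii) is the only place where genuine care is needed is also right, and it is worth recording why it goes through: if one uses the \emph{module}-congruence description of $\bend(J)$ (which coincides with the semiring congruence by Lemma \ref{lem:trop-lintrop}(1)), then the submodule of $B\times B$ generated by the bend pairs, their flips, and the diagonal is graded (its generators are homogeneous and $S$-scalars have degree zero), the transitive closure of a graded submodule is graded (take degree-$d$ components of each term of a transitivity chain), and the degree-zero part of that closure is exactly the closure of the degree-zero part. Passing through the semiring-congruence description instead, as you sketch, works but is slightly more awkward because multiplying two homogeneous relations of nonzero degrees can produce a degree-zero element; invoking the module-congruence picture avoids this. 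In short: correct, same strategy, with the key step made more explicit than in the published proof.
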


\begin{proof}
  In the notation of \S\ref{sec:CoxHypersurf}, we can cover $X$ by open affines $X_\sigma$ for
  $\sigma\in \Delta$.  The subscheme $Z \subset X_R$ is defined in each such chart as $Z_\sigma :=
  \spec (\Cox(X_R)[{x^{-1}_{\widehat{\sigma}}}]_0/I'_0)$, where $I'$ denotes the image of $I$ in
  this localization and $I'_0$ its degree zero part.  The tropicalization $\Trop_X(Z)$ is then
  obtained by gluing the affine tropicalizations
  \[
  \Trop_{X_\sigma}(Z_\sigma) = \spec \Cox(X_S)[{x^{-1}_{\widehat{\sigma}}}]_0/\bend\trop(I'_0). 
  \]
  Since the valuation preserves degree and taking quotients commutes with taking degree zero part,
  this is the spectrum of $\left( \Cox(X_S)[{x^{-1}_{\widehat{\sigma}}}]/\bend\trop(I') \right)_0$.  As in
  \S\ref{sec:Cox}, taking degree zero here coincides with taking the subalgebra of $G$-invariants,
  and by Lemma \ref{lem:restriction-of-trop} tropicalization commutes with $\F_1$-localization, so this
  is the categorical quotient of $\Trop_{\mathbb{A}^{\Delta(1)}}(\widetilde{Z}) \setminus
  V(x_{\widehat{\sigma}})$.  In the usual way, these categorical quotients patch together to yield
  the categorical quotient of $\Trop_{\mathbb{A}^{\Delta(1)}}(\widetilde{Z}) \cap U_S$.
\end{proof}

\section{Numerical invariants}\label{sec:numerical-invariants}
Here we show that there is a natural way to define Hilbert polynomials for the class of projective
subschemes over idempotent semirings that arise as tropicalizations, and that tropicalization
preserves the Hilbert polynomial.  We also show that for a projective hypersurface, the
multiplicities (sometimes called weights) decorating the facets of its tropicalization, which are
frequently used in tropical intersection theory, are encoded in the tropical scheme structure.

\subsection{The Hilbert polynomial}

First recall the classical setup.  Let $k$ be a field, $A := \Fun[x_0,\ldots, x_n]$ the homogeneous
coordinate algebra of $\PP^n_{\Fun}$, and $Z\subset \PP^n_k$ a subscheme defined by a homogeneous
ideal $I\subset A\otimes k$.  The Hilbert function of $A\otimes k/I$ is usually defined to be the
map $d \mapsto \dim_k(A\otimes k/I)_d$; however, one could equally well replace
$\dim_k(A\otimes k/I)_d$ with $\dim_k(A\otimes k/I)^\vee_d$, where $(-)^\vee$ is the $k$-linear
dual.  This may seem like a trivial observation since a finite dimensional vector space and its dual
are noncanonically isomorphic, but the choice between the two becomes contenful when we come to
define the Hilbert function over idempotent semirings (cf. Theorem \ref{thm:double-dual}).  All
homogeneous ideals defining $Z$ have the same saturation, so the corresponding Hilbert functions
coincide for $d \gg 0$ and this determines the Hilbert polynomial of $Z \subset \PP_k^n$.

To define the Hilbert function for a homogeneous congruence $K$ on $A\otimes S$, where $S$ is an
idempotent semiring, one first needs an appropriate definition of the dimension of an $S$-module.
We assume here $S$ is a totally ordered semifield.  The following definition is from
\cite{Mikhalkin-Zharkov}, in the case $S=\T$.
\begin{definition}\label{def:lindim}
  Let $S$ be a totally ordered semifield and $L$ an $S$-module.  
\begin{enumerate}
\item A collection $v_1, \dots, v_k \in L$ is \emph{linearly dependent} if any linear combination of
  the $v_i$ can be written as a linear combination of a proper subset of the $v_i$; otherwise it is
  \emph{linearly independent}.
\item The \emph{dimension} of $L$, denoted $\dim_S L$, is the largest number $d$ such that there
  exists a set of $d$ linearly independent elements in $L$.
\end{enumerate}
\end{definition}

We now show that the above notion of dimension is preserved under base change.

\begin{lemma}\label{lem:dim-base-change}
  Let $\varphi: S \to T$ be a homomorphism of totally ordered idempotent semifields.  If $L$ is a
  submodule of a finitely generated free $S$-module then $\dim_S L = \dim_T L\otimes_S T$.
\end{lemma}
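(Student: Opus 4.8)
The plan is to show the two inequalities $\dim_S L \le \dim_T L\otimes_S T$ and $\dim_S L \ge \dim_T L\otimes_S T$ separately, the first being essentially formal and the second requiring a bit of tropical linear algebra. Throughout I would fix a finitely generated free module $F = M\otimes S$ (for $M$ a finite $\Fun$-module) containing $L$, so that $F\otimes_S T = M\otimes T$ and elements of $F$ and $F\otimes_S T$ can be written as coordinate vectors with entries in $S$ and $T$ respectively; the homomorphism $\varphi$ then acts coordinate-wise.

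For $\dim_S L \le \dim_T L\otimes_S T$: I would take a linearly independent set $v_1,\dots,v_d \in L$ and argue that its image $\varphi(v_1),\dots,\varphi(v_d)$ is linearly independent in $L\otimes_S T$. Concretely, unwinding the definition, a set fails to be independent precisely when some linear combination $\sum \lambda_i v_i$ (with $\lambda_i \in S$, at least the relevant ones nonzero) also equals a combination omitting one index. In the totally ordered setting this is a statement about which coordinate achieves the maximum in each slot, and one can phrase ``$v_1,\dots,v_d$ independent'' as: for each $i$ there is a coordinate $j(i)$ and a choice of scalars such that $v_{j(i)}$ strictly dominates all the others at that coordinate (this is the ``tropical Cramer'' / cocircuit picture one gets from the references the paper already invokes, e.g.\ \cite{Frenk}). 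Since $\varphi$ is an order-preserving semifield homomorphism, strict domination can only be weakened to non-strict domination, so some care is needed — this is exactly the place the hypothesis that $\varphi$ is injective on the value group (automatic for a semifield homomorphism, since the kernel congruence would have to be trivial or everything) is used to conclude that strict inequalities are preserved. Hence independence is preserved and $d \le \dim_T L\otimes_S T$.

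For the reverse inequality: suppose $w_1,\dots,w_e \in L\otimes_S T$ are linearly independent. Each $w_k$ is a $T$-linear combination of elements of $L$; I would like to replace the $w_k$ by elements of $L$ itself (i.e.\ with $S$-coordinates) without destroying independence. The key point is that $L\otimes_S T$ is generated as a $T$-module by the image of $L$, and by a pigeonhole/support argument one can extract $e$ generators from $L$ whose images still span an $e$-dimensional submodule — more precisely, independence of $w_1,\dots,w_e$ exhibits, via the cocircuit characterization above, a set of $e$ ``witnessing'' coordinates and a submatrix of the generators with a tropically nonsingular (nonzero tropical determinant/permanent) pattern; since that pattern is visible already over $S$ after pulling back along a section of sets $T \to S$... — actually cleaner: the tropical rank of a matrix over $S$ equals the tropical rank of its image over $T$ because the defining condition (the maximum in the permanent expansion is attained uniquely) is again preserved by the order-preserving injective-on-values map $\varphi$, in the strict-to-strict direction using that $\varphi$ reflects strict inequalities on its image. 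Applying this to a matrix whose columns generate $L$ gives $\dim_S L = \mathrm{trop\text{-}rank} = \dim_T L\otimes_S T$.

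The main obstacle I anticipate is the direction strict-inequality $\Rightarrow$ strict-inequality under $\varphi$: an arbitrary order-preserving semiring homomorphism can collapse a strict inequality $a < b$ to an equality $\varphi(a) = \varphi(b)$, which would break linear independence. The resolution is that $\varphi$ is a homomorphism of \emph{semifields}, so its kernel congruence is trivial away from $0$ (any identification $a\sim b$ with $a \ne b$, $a,b \ne 0_S$, would force $1_S \sim ab^{-1}$ and then collapse everything), hence $\varphi$ is injective and in particular reflects the strict order. Once that observation is in place, both inequalities follow by the cocircuit/tropical-rank bookkeeping, and the lemma reduces to the (known) statement that tropical rank of a matrix over a totally ordered semifield is insensitive to injective base change.
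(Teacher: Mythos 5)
Your proof rests on a false claim: that a homomorphism of (idempotent, totally ordered) semifields must be injective. You argue that an identification $a\sim b$ with $a\neq b$, both nonzero, ``would force $1_S \sim ab^{-1}$ and then collapse everything.'' But this inference uses subtraction: in a field, $1\sim c$ with $c\neq 1$ gives $1-c\sim 0$ with $1-c$ invertible, hence total collapse. In a semifield there is no subtraction, and $1\sim c$ need not propagate to $0$. The canonical counterexample is the unique semifield homomorphism $\varphi\colon \T \to \B$, which sends \emph{every} element of $\R = \T^\times$ to $1_\B$; its kernel congruence identifies all nonzero elements without identifying $0_\T$ with $1_\T$. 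This is a perfectly good homomorphism of totally ordered idempotent semifields and is about as far from injective as possible. In particular, $\varphi$ does \emph{not} preserve strict inequalities ($0 < 1$ in $\T$ but $\varphi(0) = \varphi(1)$ in $\B$), which is precisely the danger you correctly identified and then incorrectly declared resolved. Since both of your inequalities, and the tropical-rank bookkeeping you sketch, lean on ``$\varphi$ injective/reflects strict order,'' the whole argument collapses at this point.

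Ironically, the paper's proof begins by reducing to exactly this non-injective map: it suffices to treat $T=\B$, because both $S$ and $T$ map uniquely to $\B$. The direction ``$S$-dependence $\Rightarrow$ $\B$-dependence'' is immediate. The real content is the converse, and the paper handles it not by preserving strict inequalities (impossible) but by \emph{rescaling}: given a $\B$-dependence among $\varphi(v_1),\dots,\varphi(v_d)$, one uses the fact that in a totally ordered semifield any nonzero $b$ is dominated by $ca$ for suitable $c$ (an archimedean-style property) to choose scalars $a_i\in S$ large enough that the combination $\sum a_i v_i$ actually equals the sub-sum omitting $v_d$ \emph{over} $S$, coordinate by coordinate. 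That scaling trick is the missing idea in your proposal; without it, or some substitute, the reverse inequality does not go through.
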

\begin{proof}
  It suffices to prove the result when $T = \B$ and $\varphi$ is the unique homomorphism to $\B$,
  defined by sending all nonzero elements to $1_\B$.  Moreover, since $L$ is a submodule of a
  finitely generated free module, it suffices to show that a set $v_1, \ldots, v_d \in S^n$ is
  linearly independent (in the sense of the above definition) if and only if the set $\varphi(v_1), \ldots,
  \varphi(v_d)$ is linearly independent.  Clearly if the $v_i$ are $S$-linearly dependent then
  their images under $\varphi$ are $\B$-linearly dependent.  Conversely, suppose $\varphi(v_1),
  \ldots, \varphi(v_d)$ are $\B$-linearly dependent, so that (without loss of generality)
  \[
  \sum^d_{i = 1} \varphi(v_i) = \sum_{i=1}^{d-1} \varphi(v_i).
  \]
  This condition says that, for each $k$, if the $k^{th}$ component of $v_i$ vanishes for $i < d$
  then it does so for $v_d$ as well.  Since $S$ is a totally ordered semifield, given any
  notrivial elements $a,b\in S$, there exists $c\in S$ such that $ca \geq b$.  Hence for each $i <
  d$, we may choose an $a_i \in S$ large enough so that each component of $a_i v_i$ is greater than
  or equal to the corresponding component of each $v_d$.  By construction we then have
  \[
  \sum_{i=1}^d a_i v_i = \sum_{i=1}^{d-1} a_i v_i,
  \]
  which shows that the $v_i$ are $S$-linearly dependent.
\end{proof}

\begin{lemma}\label{lem:lin-space-dim}
  If $L\subset S^n$ is a tropical linear space of rank $r$ (in the sense of \cite{Frenk} or
  \cite{Speyer1, Speyer2}) then $\dim_S L = r$.
\end{lemma}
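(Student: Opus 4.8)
The plan is to reduce the statement to the Boolean semiring via Lemma~\ref{lem:dim-base-change} and then prove it by a direct matroid-theoretic count. Let $\varphi\co S\to\B$ be the unique homomorphism (sending every nonzero element to $1_\B$). By Lemma~\ref{lem:dim-base-change}, $\dim_S L$ equals the $\B$-dimension of the image $\bar L\subseteq\B^n$ of $L$ under the coefficient-wise reduction $S^n\to\B^n$. Since a tropical linear space is generated as an $S$-module by its cocircuit vectors \cite{Speyer2, Frenk}, $\bar L$ is the $\B$-span of the reductions of those vectors; their supports are exactly the cocircuits of the underlying matroid $\mathcal{M}$ of the rank-$r$ valuated matroid attached to $L$, and $\mathcal{M}$ has rank $r$. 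Using that every flat of a matroid is an intersection of hyperplanes and that $\bigcup_H\mathbf{1}_{E\smallsetminus H}=\mathbf{1}_{E\smallsetminus\bigcap_H H}$, one sees that $\bar L$ is precisely the set of indicator vectors $\mathbf{1}_{E\smallsetminus F}$ of complements of flats $F$ of $\mathcal{M}$ (here $E=\{1,\dots,n\}$ is the ground set). So it remains to show $\dim_\B\bar L=r$.

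The second ingredient I would record is an elementary description of linear independence over $\B$ in the sense of the preceding definition. Regard $v_1,\dots,v_d\in\B^n$ as subsets $A_1,\dots,A_d\subseteq E$ via their supports; then a $\B$-linear combination of the $v_i$ is simply a union $\bigcup_{i\in I}A_i$. For $I\subsetneq\{1,\dots,d\}$ this union is patently a combination of a proper subfamily, so the family is linearly dependent exactly when the ``top'' union $\bigcup_{i}A_i$ can also be written using a proper subfamily, i.e.\ when some $A_i\subseteq\bigcup_{j\neq i}A_j$. Thus $v_1,\dots,v_d$ are linearly independent if and only if each $A_i$ contains a \emph{private coordinate}: an element of $E$ lying in $A_i$ but in no $A_j$ with $j\neq i$.

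For the lower bound $\dim_\B\bar L\ge r$, fix a basis $B=\{b_1,\dots,b_r\}$ of $\mathcal{M}$ and let $C^*_i$ be the fundamental cocircuit of $b_i$ with respect to $B$. Then $b_i\in C^*_i$ while $b_i\notin C^*_j$ for $j\neq i$, so $b_i$ is a private coordinate of $\mathbf{1}_{C^*_i}$ within the family $\{\mathbf{1}_{C^*_1},\dots,\mathbf{1}_{C^*_r}\}\subseteq\bar L$, which is therefore linearly independent. For the upper bound, let $v_1,\dots,v_d\in\bar L$ be linearly independent; write $v_i=\mathbf{1}_{E\smallsetminus F_i}$ for flats $F_i$ of $\mathcal{M}$ and pick a private coordinate $p_i$ of $v_i$, so that $p_i\notin F_i$ but $p_j\in F_i$ for all $j\neq i$. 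These $p_i$ are pairwise distinct (if $p_i=p_j$ with $i\neq j$ then $p_i=p_j\in F_i$, contradicting $p_i\notin F_i$), and $\{p_1,\dots,p_d\}$ is an independent set of $\mathcal{M}$: otherwise there is a circuit $C\subseteq\{p_1,\dots,p_d\}$, and picking $p_i\in C$ gives $p_i\in\operatorname{cl}(C\smallsetminus\{p_i\})\subseteq\operatorname{cl}(\{p_j:j\neq i\})\subseteq F_i$, again a contradiction. Hence $d\le\operatorname{rank}\mathcal{M}=r$.

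I expect the only real difficulty to be bookkeeping rather than anything conceptual: one has to match conventions so that the reduction $\bar L$ really is the Boolean tropical linear space of the underlying matroid of $L$, and that the ``rank $r$'' of the tropical linear space is the rank of that matroid. This rests on the cited structural facts that a tropical linear space is the $S$-module generated by its cocircuit vectors and that specializing the valuation to the trivial one recovers the underlying matroid; granting these, the combinatorics above is completely elementary.
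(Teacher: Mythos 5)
Your proof is correct, but it takes a genuinely different route from the paper's. The paper base-changes along a homomorphism $\psi\colon S\to\T$ (using Lemma~\ref{lem:dim-base-change}), notes that $L\otimes_S\T$ is a rank-$r$ tropical linear space in $\T^n$, then invokes \cite[Proposition~2.5]{Mikhalkin-Zharkov}, which identifies $\dim_\T$ of a tropically convex set with its maximal local topological dimension, and finally uses the purity statement that a tropical linear space in $\T^n$ is a polyhedral complex of pure dimension equal to its rank. You instead base-change all the way down to $\B$ and replace the polyhedral-geometric input with elementary matroid combinatorics: you characterize $\B$-independence by the existence of private coordinates in the supports, identify $\bar L$ with the family of complements of flats of the underlying matroid, and then settle the two inequalities by hand---fundamental cocircuits of a basis give $r$ independent vectors, and for the reverse direction the private coordinates of any independent family form an independent set of $\mathcal{M}$ because each $F_i$ is a flat containing all the other private elements but not its own. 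Your argument is more self-contained and avoids both the tropical polyhedral machinery and the Mikhalkin--Zharkov citation, at the cost of a longer combinatorial verification; the paper's is shorter because it outsources the core content to known results on the geometry of tropical linear spaces. One small point worth tightening: Lemma~\ref{lem:dim-base-change} is stated for $L\otimes_S T$ rather than the set-theoretic image $\bar L$, but its proof proceeds exactly by the image/linear-independence comparison you use, so the substitution is harmless.
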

\begin{proof}
  Let $\psi: S \to \T$ be any homomorphism (for example, one can take the unique homomorphism to
  $\B$ followed by the unique homomorphism $\B \hookrightarrow \T$).  The base change $L\otimes_S\T$
  is a tropical linear space of rank $d$ in $\T^n$ (this can easily be seen in terms of the
  corresponding valuated matroids).  By Lemma \ref{lem:dim-base-change}, $\dim_S L = \dim_\T
  L\otimes_S \T$ and by \cite[Proposition 2.5]{Mikhalkin-Zharkov}, $\dim_\T L\otimes_S \T$ is equal
  to the maximum of the local topological dimensions of the polyhedral set underlying $L\otimes_S
  \T$.  The statement now follows from the fact that a tropical linear space in $\T^n$ is a
  polyhedral complex of pure dimension equal to its rank.
\end{proof}

\begin{definition}
  Given a homogenous congruence $K$ on $A\otimes S = S[x_0, \ldots, x_n]$, the \emph{Hilbert function} of $K$ is the map $d \mapsto \dim_S (A\otimes S/K)^\vee_d$.
\end{definition}

Two homogeneous congruences (cf. \S\ref{sec:CoxHypersurf}) define the same projective subscheme if
and only if they coincide in all sufficiently large degrees.  Given a homogeneous congruence $K$,
the \emph{saturation} $K^{sat}$ is the maximal homogeneous congruence that agrees with $K$ in
sufficiently high degrees (this exists since the sum of any two congruences that coincide with $K$
in high degrees will itself coincide with $K$ in high degrees), and we say that $K$ is
\emph{saturated} if $K=K^{sat}$.

\begin{definition}
The Hilbert function of a projective subscheme $Z\subset \PP^n_S$ is the Hilbert function of the
unique saturated homogeneous congruence on $S[x_0, \ldots, x_n]$ defining $Z$.
\end{definition}

Since modules over a semiring do not form an abelian category, it does not appear automatic that the
Hilbert function of an arbitrary projective subscheme over $S$ is eventually polynomial, but
remarkably, this is the case for schemes in the image of the tropicalization functor.

\begin{theorem}\label{thm:HF2}
  Let $\nu: k \to S$ be a valued field.  If $I \subset A\otimes k$ is a homogenous ideal then the
  Hilbert function of $I$ coincides with the Hilbert function of $\bend\trop(I)$.  Consequently, for
  any subscheme $Z\subset \PP_k^n$, the tropicalization $\Trop^{\nu}_{\PP^n}(Z)\subset \PP_S^n$ has
  a well-defined Hilbert polynomial and it coincides with that of $Z$.
\end{theorem}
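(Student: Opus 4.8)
The plan is to prove the statement degree by degree, i.e.\ to show that for every $d \ge 0$
\[
\dim_S\bigl(A\otimes S/\bend\trop(I)\bigr)^\vee_d \;=\; \dim_k\bigl(A\otimes k/I\bigr)_d,
\]
so that the tropical Hilbert function of $\bend\trop(I)$ agrees on the nose with the classical Hilbert function of $I$. Granting this, the right-hand side is eventually a polynomial in $d$ (the classical Hilbert polynomial of $Z$), hence so is the left-hand side; and since $\bend\trop(I)$ coincides with its saturation in high degrees and is independent up to saturation of the choice of homogeneous ideal $I$ defining $Z$, the projective subscheme $\Trop^\nu_{\PP^n}(Z)$ has a well-defined Hilbert polynomial equal to that of $Z$. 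Everything thus reduces to the displayed identity in a fixed degree $d$; write $m := \lvert A_d\rvert$ for the number of monomials of degree $d$ and $r := \dim_k I_d$, so the right-hand side is $m-r$.

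First I would carry out a homogeneity reduction. Since the valuation is coefficient-wise and preserves degree, $\trop(I)$ is a homogeneous ideal of $A\otimes S$ whose degree-$d$ part is $\trop(I_d)$, the $S$-submodule of $A_d\otimes S$ generated by the coordinate-wise valuations of the vectors of the $k$-subspace $I_d\subset A_d\otimes k$. Using that the bend congruence of a homogeneous ideal is homogeneous (\S\ref{sec:CoxHypersurf}), Lemma \ref{units-lemma} part (2), and the observation in the proof of Lemma \ref{lem:hyprecov} part (2) that $\langle\bend(g)\rangle$ restricts in degree $\deg g$ to the module congruence $\bend(g)$, one checks that the degree-$d$ part of $\bend\trop(I)$ is exactly the module bend congruence $\bend(\trop(I_d))$ on $A_d\otimes S$ --- the only point requiring care being that multiplying a bend relation of an element of $\trop(I)$ by a monomial again produces a bend relation of an element of $\trop(I)$, so degree-mixing introduces nothing new. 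Consequently $\bigl(A\otimes S/\bend\trop(I)\bigr)_d = A_d\otimes S/\bend(\trop(I_d))$, and by the criterion extracted in the proof of Proposition \ref{prop:bend-locus} part (1) --- a linear functional descends modulo $\bend(N)$ precisely when it lies in $f^\perp$ for every $f\in N$ --- we obtain
\[
\bigl(A\otimes S/\bend\trop(I)\bigr)^\vee_d \;=\; \bigcap_{f\in\trop(I_d)} f^\perp \;=\; \trop(I_d)^\perp.
\]

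The second step supplies the tropical linear algebra. The subspace $I_d\subset A_d\otimes k\cong k^m$ has dimension $r$, so $\trop(I_d)\subset S^m$ is the tropical linear space associated to the valuated matroid whose tropical Pl\"ucker vector is $\bigl(\nu(p_B)\bigr)_{\lvert B\rvert = r}$; in particular it has rank $r$. By valuated-matroid duality its orthogonal complement $\trop(I_d)^\perp$ is the tropical linear space of the dual valuated matroid, of rank $m-r$. Applying Lemma \ref{lem:lin-space-dim} to both $\trop(I_d)$ and $\trop(I_d)^\perp$ then gives $\dim_S\trop(I_d)^\perp = m-r$, the desired value. When $S$ is not $\T$ the two rank assertions over $S$ may instead be deduced by base-changing along the canonical map $S\to\B$ and invoking Lemma \ref{lem:dim-base-change}, which reduces them to the corresponding statements for the underlying ordinary matroid.

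The step I expect to be the main obstacle is the second one. It rests on the structure theory of tropical linear spaces over an arbitrary totally ordered idempotent semifield --- that coordinate-wise valuation turns an $r$-dimensional classical subspace into a tropical linear space of rank $r$, and that forming $(-)^\perp$ interchanges rank $r$ and rank $m-r$ --- which the paper uses only by citation to \cite{Frenk, Speyer1, Speyer2}; the real work is confirming that those sources (or the $\B$-reduction via Lemma \ref{lem:dim-base-change}) genuinely deliver these facts over semifields other than $\T$. By contrast the homogeneity bookkeeping of the first step is routine, and the passage from the degree-wise identity to the Hilbert-polynomial statement is standard commutative algebra transported to the setting of congruences.
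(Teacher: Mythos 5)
Your proof is correct and follows essentially the same route as the paper: reduce to a degree-by-degree identity, identify $\bigl(A_d\otimes S/\bend\trop(I_d)\bigr)^\vee$ as a tropical linear space of rank $\dim_k A_d - \dim_k I_d$, and conclude via Lemma~\ref{lem:lin-space-dim}. The only cosmetic differences are that the paper phrases the key duality as ``the dual equals $\trop\bigl((A_d\otimes k/I_d)^\vee\bigr)$'' where you phrase it as $\trop(I_d)^\perp$ and invoke valuated-matroid duality (the same fact, namely compatibility of tropicalization with orthogonal complements), and the paper handles the passage to Hilbert polynomials via Theorem~\ref{thm:Coxcomp} rather than your saturation argument.
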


\begin{proof}
  The operation $\bend\trop(-)$ commutes with restriction to the degree $d$ graded piece, so
  \[
  \left(A\otimes S / \bend\trop(I) \right)_d = A_d \otimes S / \bend\trop(I_d).
  \]
  By Propositions \ref{prop:bend-locus} and \ref{prop:intersection-of-bend-loci}, the dual,
  $\left(A_d \otimes S/\bend\trop(I_d) \right)^\vee$, is the tropical linear space in $(A_d\otimes
  S)^\vee$ that is the tropicalization of the linear subspace $(A_d \otimes k / I_d)^\vee \subset
  (A_d \otimes k)^\vee$.  Since the tropicalization of a subspace of dimension $r$ is a rank $r$
  tropical linear space, the statement that $\bend\trop(-)$ preserves the Hilbert function of $I$
  now follows from Lemma \ref{lem:lin-space-dim}.  The statement about the Hilbert polynomials then
  follows since, by Theorem \ref{thm:Coxcomp}, $\Trop^{\nu}_{\PP^n}(Z)$ is defined by the
  homogeneous congruence $\bend\trop(I)$.
\end{proof}

Recall that, classically, a family of subschemes in projective space is flat if and only if the
Hilbert polynomials of the fibers are all equal.  The above result therefore suggests that if one
views tropicalization as some kind of degeneration of a variety, then the numerical behavior is that
of a flat degeneration.  Moreover, this next result (Theorem \ref{thm:ValFamily} part (2)) shows
that the family of all tropicalizations of a projective subscheme $Z$ has the numerical behaviour of
a flat family.

\begin{corollary}
  For $S$ a totally ordered idempotent semifield, the Hilbert polynomial of the fiber of the family
  $\Trop^{\nu^k_{univ}}(Z) \to \val(k)$ over any $S$-point is equal to the Hilbert polynomial of $Z$.
\end{corollary}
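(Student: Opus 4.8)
The plan is to recognize this corollary as a repackaging of Theorem~\ref{thm:HF2} in terms of the universal family, so the work is entirely in unwinding definitions. First I would describe the $S$-points of $\val(k)$. By Proposition~\ref{val-univ-property}, a morphism $\spec S \to \val(k) = \spec S^k_{univ}$ is the same as a semiring homomorphism $\varphi\co S^k_{univ} \to S$, and such a $\varphi$ is in canonical bijection with the valuation $\nu := \varphi \circ \nu^k_{univ}\co k \to S$. Since $k$ is a field, this $\nu$ is automatically non-degenerate, so $(k,S,\nu)$ is a valued field with $S$ a totally ordered idempotent semifield — precisely the input hypothesized in Theorem~\ref{thm:HF2}.

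Next I would compute the fibre of $\Trop^{\nu^k_{univ}}_{\PP^n}(Z) \to \val(k)$ over this $S$-point. By definition the fibre is the scheme-theoretic base change $\Trop^{\nu^k_{univ}}_{\PP^n}(Z) \times_{\spec S^k_{univ}} \spec S$ along $\varphi$. Applying Proposition~\ref{prop:val-functoriality} with the valuation $\nu^k_{univ}\co k \to S^k_{univ}$ in the role of $\nu$ and $\varphi\co S^k_{univ} \to S$ in the role of the semiring map identifies this base change with $\Trop^{\varphi \circ \nu^k_{univ}}_{\PP^n}(Z) = \Trop^{\nu}_{\PP^n}(Z)$; this is exactly the fibre description already recorded in Theorem~\ref{thm:valfamily1}.

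Finally, Theorem~\ref{thm:HF2} applies to the valued field $\nu\co k \to S$ and asserts that $\Trop^{\nu}_{\PP^n}(Z) \subset \PP^n_S$ has a well-defined Hilbert polynomial equal to that of $Z \subset \PP^n_k$. Chaining the three identifications gives the claim. I do not expect a genuine obstacle here: the one point worth a sentence of care is that the conclusion is \emph{uniform} over $\val(k)$ — the Hilbert polynomial of the fibre does not depend on which $S$-point (equivalently, which valuation) is chosen, precisely because Theorem~\ref{thm:HF2} always returns the Hilbert polynomial of $Z$ itself. It is also worth remarking that the hypotheses needed ($k$ a field and $S$ a totally ordered idempotent semifield) are exactly those assumed in the statement, so no auxiliary genericity or non-degeneracy condition on the $S$-point is required.
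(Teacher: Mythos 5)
Your proposal is correct and follows essentially the same route as the paper: the paper's one-line proof invokes Theorem~\ref{thm:valfamily1} to identify the fibre over an $S$-point with $\Trop^\nu_{\PP^n}(Z)$ and then applies Theorem~\ref{thm:HF2}, while you unpack the same chain of reasoning in slightly more detail via Propositions~\ref{val-univ-property} and~\ref{prop:val-functoriality}. No gaps.
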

\begin{proof}
This follows directly from Theorems \ref{thm:valfamily1} and \ref{thm:HF2} since the Hilbert
polynomials of the fibers are all equal to the Hilbert polynomial of $Z$.
\end{proof}

\subsection{Recovering the multiplicities and the defining polynomial of a tropical hypersurface}\label{sec:mults}

\begin{proposition}\label{prop:trophyprecov}
  For any valued ring $\nu : R \rightarrow S$ such that $S$ is a semifield, and any projective
  hypersurface $Z = V(f) \subset \PP_R^n$, the tropicalized scheme $\Trop(Z) \subset \PP_S^n$ determines
  the defining homogeneous polynomial $\nu(f)\in (A\otimes S)_d$ uniquely up to scalar.
\end{proposition}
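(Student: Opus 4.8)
The plan is to recover $\nu(f)$, up to scalar, from the defining congruence of $\Trop(Z)$ in a suitable degree, via the recovery statement for bend relations already in hand, Lemma~\ref{lem:hyprecov}. Write $A=\Fun[x_0,\dots,x_n]$, so that $\Trop(Z)\subset\PP^n_S=\proj(A\otimes S)$ is cut out by the homogeneous semiring congruence $K:=\bend\trop((f))$ on $A\otimes S$, where $(f)\subset A\otimes R$ is the ideal of $Z$ and $d:=\deg f$. First I would note that $d$ is determined by $\Trop(Z)$: when $S$ is totally ordered this follows from Theorem~\ref{thm:HF2}, the Hilbert polynomial of $\Trop(Z)$ being $\binom{e+n}{n}-\binom{e-d+n}{n}$. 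In the cleanest scenario one then works in degree $d$ itself: since $(f)$ is principal, $(f)_d=R\cdot f$, and coefficient-wise valuation of scalar multiples of $f$ produces no cancellation, so $\trop((f)_d)=S\cdot\nu(f)$ and, by Lemma~\ref{units-lemma}(2), $K_d$ is the module congruence $\bend(\nu(f))$ on $A_d\otimes S$; since $S$ is a semifield, Lemma~\ref{lem:hyprecov}(1) then pins down $\nu(f)\in(A\otimes S)_d$ up to scalar. The gap in this shortcut is that a projective subscheme determines its defining homogeneous congruence only up to agreement in large degrees, i.e.\ up to saturation, so it works only if $K=\bend\trop((f))$ is saturated — which is plausible (classically $(f)$ is a saturated ideal and its Hilbert function already equals its Hilbert polynomial in every degree) but which I expect to be the main point requiring a real argument, since it is not forced by the usual rigidity: a full-rank $S$-submodule of a tropical linear space over an idempotent semifield need not be the whole space.

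To avoid that issue I would carry out the recovery in a single large degree. Fix $e\gg 0$, large enough that the defining congruence of $\Trop(Z)$ agrees with $K$ in degree $e$; then $\Trop(Z)$ determines $K_e$, and, since $\bend\trop(-)$ commutes with passage to graded pieces (proof of Theorem~\ref{thm:HF2}), $K_e=\bend(L)$ with $L:=\trop((f)_e)=\trop\bigl(f\cdot(A_{e-d}\otimes R)\bigr)\subset A_e\otimes S$. Here $L$ is a tropical linear space, being the tropicalization of the subspace $f\cdot(A_{e-d}\otimes R)\subset A_e\otimes R$, so $K_e=\bend(L)$ determines $L$ by Theorem~\ref{thm:double-dual} and its extension to tropical linear spaces (footnote to Lemma~\ref{lem:hyprecov}). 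It remains to extract $\nu(f)$ up to scalar from $L$. By Lemma~\ref{lem:lin-space-dim}, $\dim_S L=\binom{e-d+n}{n}$, which together with the visible value of $e$ recovers $d$; and the union of the supports of the elements of $L$ has convex hull $\mathrm{Newt}(f)+\Delta_{e-d}$ (with $\Delta_m$ the convex hull of the degree-$m$ exponent vectors), from which $\mathrm{Newt}(f)$ is recovered by removing $\Delta_{e-d}$ as a Minkowski summand.

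The decisive step is to recover the coefficients. Set $\gamma:=(e-d)\cdot(1,0,\dots,0)$, so $\mathrm{Newt}(f)+\gamma$ is a translate of $\mathrm{Newt}(f)$ sitting in degree $e$; I claim the elements $w\in L$ with $\mathrm{conv}(\supp(w))=\mathrm{Newt}(f)+\gamma$ are precisely the $S$-multiples of $\nu(f)\,x^\gamma$. On one hand $\nu(fx^\gamma)=\nu(f)x^\gamma\in L$ has this Newton polytope. On the other hand any $w\in L$ is an $S$-linear combination $\sum_i s_i\,\nu(fp_i)$ with $p_i\in A_{e-d}\otimes R$; since tropical addition is a coordinate-wise maximum it creates no cancellation, so $\supp(w)=\bigcup_i\supp(\nu(fp_i))$, and as $\mathrm{Newt}(fp_i)=\mathrm{Newt}(f)+\mathrm{Newt}(p_i)$ (taking $R$ a domain, and $\nu$ non-degenerate, as holds for fields) the equality $\mathrm{conv}(\supp(w))=\mathrm{Newt}(f)+\gamma$ forces $\mathrm{Newt}(p_i)\subseteq\{\gamma\}$ for each $i$, i.e.\ $p_i=c_ix^\gamma$, whence $w=\bigl(\sum_i s_i\nu(c_i)\bigr)\nu(f)x^\gamma$. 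Picking any nonzero such $w$ and translating its support by $-\gamma$ yields $\nu(f)$ up to scalar, as desired (equivalently, one could feed this $L$-data back into Lemma~\ref{lem:hyprecov}(1)). I expect the genuinely delicate part to be precisely the organization of this last reduction — checking that each passage, from the scheme to the congruence $K_e$ in high degree and from $L$ through the Newton-polytope combinatorics, is reversible — rather than any single isolated difficulty; the only honest alternative is to prove outright that $\bend\trop((f))$ is saturated and then run the degree-$d$ version above.
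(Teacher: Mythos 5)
Your ``cleanest scenario'' is exactly the paper's proof: the paper simply observes that $\big(\bend\trop((f))\big)_d=\bend(\nu(f))$ (since $(f)_d=R\cdot f$, so $\trop((f)_d)=S\cdot\nu(f)$, whence by Lemma~\ref{units-lemma}(1) the degree-$d$ bend congruence is $\bend(\nu(f))$) and then invokes Lemma~\ref{lem:hyprecov}. So the degree-$d$ route you dismiss as requiring an unverified saturation claim is in fact what the paper does.

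The saturation concern you raise is a genuine fine point, but I think it rests on a reading of ``$\Trop(Z)$'' the paper does not intend. Throughout \S\ref{sec:tropicalization} the tropicalization of a projective subscheme is \emph{constructed} as $\proj$ of the explicit quotient by $\bend\trop(I)$, and the proposition is read as: that congruence determines $\nu(f)$ up to scalar. Under that reading there is nothing to saturate, and the paper's two-sentence argument is complete and holds for an arbitrary valued ring $R$ and an arbitrary semifield $S$, exactly as stated. If, instead, one insists on reading $\Trop(Z)$ as a bare closed subscheme---so that only the saturation of $\bend\trop((f))$ is remembered---then yes, some extra work is needed, and your high-degree argument is a reasonable way to do it.

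That said, the high-degree detour as you've written it proves something strictly weaker than the proposition. The appeal to Theorem~\ref{thm:HF2}, to Theorem~\ref{thm:double-dual}, to Lemma~\ref{lem:lin-space-dim} and to the Maclagan--Rinc\'on recovery statement for tropical linear spaces all require $S$ to be \emph{totally ordered}; and the Newton-polytope step (no cancellation in $f\cdot p_i$, so $\mathrm{Newt}(f p_i)=\mathrm{Newt}(f)+\mathrm{Newt}(p_i)$, and $\supp(\nu(fp_i))=\supp(fp_i)$) needs $R$ to be a domain and $\nu$ non-degenerate. None of these hypotheses is in Proposition~\ref{prop:trophyprecov}, which asks only for a valued ring $R$ and a semifield $S$. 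You flag the domain/non-degeneracy assumptions in passing, but you do not flag the total-order issue, and as written the argument simply does not apply to a general semifield. The reduction itself (that in degree $e\gg0$ only $p_i=c_ix^\gamma$ can contribute once $\mathrm{conv}(\supp(w))=\mathrm{Newt}(f)+\gamma$, because a polytope cannot be strictly contained in its own translate) is correct under those extra hypotheses, so you do obtain a correct proof of a special case; but the proposition's generality is lost, and the paper's route, via the degree-$d$ piece and Lemma~\ref{lem:hyprecov}, avoids all of this.
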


\begin{proof}
  Since $\Trop(Z) = \proj A\otimes S/\langle \bend(\nu(g))\rangle_{g\in (f)}$, and this homogeneous
  congruence in degree $d$ coincides with the congruence $\bend(\nu(f))$, the result follows from
  Lemma \ref{lem:hyprecov}.
\end{proof}

\begin{corollary}\label{cor:multiplicities}
  For an algebraically closed valued field $\nu : k \rightarrow \T$, and an irreducible projective hypersurface $Z \subset \PP_k^n$ that is not contained in any coordinate hyperplane, the scheme
  $\Trop(Z)\subset\PP_\T^n$ determines the multiplicities on the facets of the restriction of its
  $\T$-points to the tropical torus $\mathbb{R}^n$.\footnote{Maclagan and Rinc\'on have now extended this result
    from hypersurfaces to arbitrary irreducible projective varieties \cite{Maclagan-Rincon}.}
\end{corollary}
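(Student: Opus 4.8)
The plan is to combine Proposition~\ref{prop:trophyprecov} with the classical combinatorial description of the multiplicities attached to the facets of a tropical hypersurface.

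First I would note that, since $Z\subset\PP^n_k$ is an irreducible hypersurface and the homogeneous coordinate ring $A\otimes k = k[x_0,\dots,x_n]$ is a unique factorization domain, we have $Z = V(f)$ for a homogeneous polynomial $f$ that is unique up to a scalar in $k^\times$; set $d = \deg f$. Applying Proposition~\ref{prop:trophyprecov} with $R=k$ and $S=\T$, the scheme $\Trop(Z)\subset\PP^n_\T$ determines the coefficient-wise valuation $\nu(f)=\trop(f)\in(A\otimes\T)_d$ uniquely up to a scalar in $\T^\times$. (The mechanism, as in the cited proposition, is that in degree exactly $d$ the homogeneous ideal $(f)$ reduces to the line $k\cdot f$, so the degree-$d$ part of the defining congruence $\bend\trop((f))$ is just $\bend(\trop(f))$, and Lemma~\ref{lem:hyprecov}(2) applies.)

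Next I would invoke Theorem~\ref{thm:KP}, which identifies the set of $\T$-points of $\Trop(Z)$ with the Kajiwara--Payne tropicalization; working stratum by stratum (as in the proof of that theorem, using \cite[Prop.~3.4]{Payne} and Lemma~\ref{lem:restriction-of-trop}) reduces the computation of the facet multiplicities to the dense torus, where $\trop(Z)$ is the classical tropical hypersurface of $f$. There the multiplicity attached to a facet $F$ is, by the standard theory (see \cite[\S2]{tropdiscrim} or \cite{MS}), the lattice length of the edge of the regular subdivision of the Newton polytope of $f$ induced by the coefficients of $\trop(f)$ that is dual to $F$. Rescaling $\trop(f)$ by a unit of $\T$ shifts the associated piecewise-linear function by a global constant, hence changes neither the induced subdivision nor the lattice lengths of its edges; so these multiplicities depend only on the class of $\trop(f)$ modulo scalars.

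Combining the two paragraphs, $\Trop(Z)$ determines $\trop(f)$ up to a scalar, and $\trop(f)$ up to a scalar determines every facet multiplicity; hence $\Trop(Z)$ determines the multiplicities. The one place to be careful --- and the main (if modest) obstacle --- is the passage through Proposition~\ref{prop:trophyprecov}: one must check that the scheme structure genuinely pins down $\trop(f)$ itself rather than the weaker datum of the tropical linear space $\trop((f))$ (resolved by restricting to degree $d$, as noted), and that over an algebraically closed field the fundamental theorem of tropical geometry really identifies $\trop(Z)$ together with its multiplicities as the weighted tropical hypersurface of $f$, so that the combinatorial weights above are the intended geometric ones. The remaining verifications are routine.
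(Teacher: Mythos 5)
Your proposal is correct and follows essentially the same route as the paper: both arguments reduce to Proposition~\ref{prop:trophyprecov}, which recovers $\nu(f)$ up to a scalar from the degree-$d$ piece of the defining congruence, and both then invoke the classical description (via \cite[\S2]{tropdiscrim}) of facet multiplicities as lattice lengths in the regular subdivision of the Newton polytope induced by $\nu(f)$, a datum that is invariant under rescaling. Your extra passage through Theorem~\ref{thm:KP} and the Kajiwara--Payne stratification simply spells out a compatibility the paper's one-line proof takes for granted.
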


\begin{proof}
  This follows immediately from Proposition \ref{prop:trophyprecov}, since the multiplicities for a
  tropical hypersurface are lattice lengths in the Newton polytope of $f$ \cite[\S2]{tropdiscrim}.
\end{proof}

\section{Hypersurfaces, bend loci, and tropical bases}\label{sec:hypbases}

Let $\nu: R \to S$ be a valued ring and $f\in R[x_1, \ldots, x_n]$.  This data gives rise to two
subschemes of $\mathbb{A}^n_S$ that are distinct in general.  The first is the bend locus
$\Bend(\nu(f))$ of the coefficient-wise valuation of $f$ --- it is cut out by the bend relations of
the single polynomial $\nu(f)$.  The second is the tropicalization $\Trop(V(f))$ of the classical
hypersurface determined by $f$ --- it is cut out by the bend relations of the coefficient-wise
valuations of all elements in the principal ideal $(f)$.

In this section we compare $\Bend(\nu(f))$ and $\Trop(V(f))$ as schemes. The $S$-points of each
coincide (at least when $S=\T$ --- see \cite[Example 2.5.5]{MS}), so the set-theoretic bend locus of
$\nu(f)$ is equal to the set-theoretic tropicalization of $V(f)$; in the literature this set is
usually referred to as the tropical hypersurface of $\nu(f)$.  However, as schemes they are
generally different, though they do sometimes agree, such as when $f$ is a monomial or binomial (see
Proposition \ref{prop:trop-basis}).

If $R$ is a field then the discrepancy between these two schemes can be understood in terms of
Theorem \ref{thm:HF2}: the tropicalization of a projective hypersurface must have enough relations
in its homogeneous coordinate algebra to yield the Hilbert polynomial of a codimension one
subscheme, but the bend relations of a single tropical polynomial do not typically suffice for this
numerical constraint.

This discussion leads naturally to the notion of a scheme-theoretic tropical basis, a term we
introduce as a replacement for the usual set-theoretic notion considered in the tropical literature
(e.g., \cite[\S2.5]{MS}).

\subsection{Bend loci versus tropical hypersurfaces}\label{sec:vs}

Let $A$ be an integral $\Fun$-algebra, $\nu: R \to S$ a valued ring, and $f\in A\otimes R$.  With a
slight abuse of notation, we will write $\trop(f)$ for $\trop(-)$ applied to the principal ideal
$(f)$.  We are concerned with comparing the congruences $\langle \bend(\nu(f))\rangle$ and
$\bend\trop(f)$.  The former defines the bend locus $\Bend(\nu(f))$, while the latter defines the
tropicalization of the hypersurface $V(f)$.  To illustrate that $\Trop(V(f))$ can be strictly smaller
than $\Bend(\nu(f))$ consider the following example.

\begin{example}\label{ex:tropbas}
  Let $R$ be a ring equipped with the trivial valuation $\nu: R\to \B$, and let
  $f= x^2 + xy + y^2 \in R[x,y]$.  One can see as follows that the congruence $\bend\trop(f)$ is
  strictly larger than the semiring congruence $\langle \bend(\nu(f)) \rangle$.  This latter
  congruence is generated by the degree 2 relations $x^2 + y^2 \sim x^2 + xy \sim xy + y^2$.  The
  degree 3 part of $\langle\bend(\nu(f))\rangle$ is generated (as a module congruence) by the
  relations $\bend(x^3 + x^2 y + xy^2)$ and $\bend(x^2 y + xy^2 + y^3)$. If $g,h$ are polynomials in
  $\B[x,y]$, then $gh$ and $g+h$ each have at least as many monomial terms as $g$, and from this
  observation it follows that any nontrivial degree 3 relation in $\langle\bend(\nu(f))\rangle$
  involves only polynomials with at least 2 terms.  However, $(x-y)f = x^3 - y^3$, and this gives
  the degree 3 monomial relation $x^3 \sim y^3$ in $\bend\trop(f)$.  This behavior, where
  $\bend(\nu(f))$ does not generate all the relations $\bend\trop(f)$, appears to be generic.
  Suppose now that $f = x^2 + xy+ ty^2$ for some $t \neq 0,1$.  The degree 3 part of
  $\langle\bend(\nu(f))\rangle$ is generated as a module congruence by the bend relations of
  $\nu(xf) = x^3 + x^2 y + \nu(t) xy^2$ and $\nu(yf)=x^2 y + xy^2 + \nu(t) y^3$.  However, in
  $\bend\trop(f)$ one also has the bend relations of
  $\nu( (x-ty)f) = x^3 + \nu(1-t)x^2 y + \nu(t)^2 y^3$; among these is the relation
  \[
  x^3 + \nu(t)^2 y^3 \sim x^3 + \nu(1-t) x^2 y
  \]
  which cannot be obtained from $\bend(\nu(xf))$ and $\bend(\nu(yf))$. In fact, one can check that
  these relations now generate all relations in the degree 3 part of $\bend\trop(f)$.
\end{example}

In general, when passing from $\langle \bend(\nu(f)) \rangle$ to $\bend\trop(f)$, the additional
relations appearing in $\bend\trop(f)$ are not uniquely determined by the single tropical
polynomial $\nu(f)$, so the tropicalization of a hypersurface is not uniquely determined by the bend
locus of the valuation of a defining polynomial.  The following is a simple example illustrating
this: two polynomials with the same valuation but whose associated hypersurfaces have distinct
tropicalizations as schemes.

\begin{example}
  Let $k = \mathbb{C}$ with the trivial valuation $\nu : k \rightarrow \B$, and consider the
  polynomials in $\C[x,y]$,
  \[
  f=a_1x^2+a_2xy+a_3y^2, \text{ and } g = x^2+xy+y^2,
  \]
  where the coefficients in $f$ do not satisfy the quadratic relation $a_2^2=a_1a_3$.  Clearly
  $\nu(f)=\nu(g)$, and as seen in Example \ref{ex:tropbas}, $\bend\trop(g)$ contains the relation
  $x^3 \sim y^3$. However, for any nonzero linear form $h = b_1 x + b_2 y \in \mathbb{C}[x,y]$ the
  polynomial $fh$ has at least three terms, so $\bend\trop(f)$ cannot contain the relation $x^3
  \sim y^3$.
\end{example}

There are, however, certain nice situations where the tropicalization of an ideal is equal to the
intersection of the bend loci of a set of generators of the ideal.

\begin{proposition}\label{prop:trop-basis}
  Let $A$ be a torsion-free integral monoid-with-zero, and suppose $S$ is totally ordered.  If $f =
  ax + by$ is a binomial ($a,b \in R$, and $x,y \in A$) then $\bend\trop(f) =
  \langle\bend(\nu(f))\rangle$.
\end{proposition}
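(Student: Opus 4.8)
The plan is to prove the nontrivial inclusion $\bend\trop(f)\subseteq C$, where $C\defeq\langle\bend(\nu(f))\rangle$; the reverse inclusion is automatic since $\nu(f)\in\trop(f)$. By Lemma~\ref{lem:trop-lintrop}(2) (using that $A$ is integral) $C=\bend((\nu(f)))$, so $C$ contains the bend relations of every $q\,\nu(f)$, $q\in A\otimes S$; and by Proposition~\ref{prop:intersection-of-bend-loci} together with Lemma~\ref{units-lemma}(2), $\bend\trop(f)$ is generated as a module congruence by the relations $\bend(\nu(hf))$, $h\in A\otimes R$. So it suffices to show $\bend(\nu(hf))\subseteq C$ for each $h$. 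First I would dispose of the degenerate cases: if $x=y$, or $\nu(a)=0_S$, or $\nu(b)=0_S$, a direct computation with the valuation (using Lemma~\ref{lem:val}) shows $\trop(f)=(\nu(f))$, whence $\bend\trop(f)=\bend((\nu(f)))=C$. So assume $x\neq y$ and $\nu(a),\nu(b)\neq 0_S$. Then $m\,\nu(f)=\nu(a)(mx)+\nu(b)(my)$ has two terms for every monomial $m$, so its bend relations lie in $C$; multiplying by a scalar $\lambda\in S$ and using transitivity, $C$ contains the \emph{ladder relation} $\lambda\nu(a)(mx)\sim\lambda\nu(b)(my)$ for every monomial $m$ and every $\lambda$.

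Next I would analyze the monomial combinatorics of $g=hf$. Write $h=\sum_m c_m m$ with $c_m\ne 0$. Integrality of $A$ makes multiplication by $x$ and by $y$ injective on monomials, so among $\{mx\}\cup\{my\}$ the only coincidences are of the form $mx=m'y$; the assignment $m\mapsto m'$ is a partial injection on $\supp h$, so $\supp h$ breaks into directed paths and cycles, and a cycle of length $\ell$ forces $x^\ell=y^\ell$, hence $x=y$ by cancellativity and torsion-freeness --- excluded. Thus $\supp h$ is a disjoint union of directed paths; correspondingly $\nu(g)$ splits into ``blocks'' with pairwise disjoint supports, one per path, the block over a path $P$ being exactly $\nu(h_P f)$ for $h_P$ the restriction of $h$ to $P$. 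Since deleting one monomial $\mu$ from $\nu(g)$ changes only the block containing $\mu$, the relation $\nu(g)\sim\nu(g)_{\widehat\mu}$ follows from the same relation for that block by adding the other blocks to both sides; so we reduce to the case where $\supp h$ is a single path $m_0\to\dots\to m_r$. With $\mu_{-1}\defeq m_0y$, $\mu_j\defeq m_jx=m_{j+1}y$ for $0\le j<r$, and $\mu_r\defeq m_rx$ (pairwise distinct), one computes $\nu(g)=\nu(c_0b)\mu_{-1}+\sum_{j=0}^{r-1}\nu(c_ja+c_{j+1}b)\mu_j+\nu(c_ra)\mu_r$, whereas $\tilde g\defeq\nu(h)\nu(f)$ has the same shape but with $\nu(c_ja)+\nu(c_{j+1}b)$ in place of $\nu(c_ja+c_{j+1}b)$. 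Subadditivity gives $\nu(g)\le\tilde g$ coefficientwise; $\tilde g\in(\nu(f))$ so $\bend(\tilde g)\subseteq C$; and the ladder relations specialize to $L_j\colon\nu(c_ja)\mu_j\sim\nu(c_jb)\mu_{j-1}$ ($0\le j\le r$), all lying in $C$.

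The crux, and the step I expect to be the main obstacle, is to show that modulo $C$ the element $\nu(g)$ and all of its single-term deletions $\nu(g)_{\widehat\mu}$ are congruent to one fixed maximal-coefficient term $\gamma\mu^*$ of $\tilde g$; since $\tilde g\sim\gamma\mu^*$ modulo $C$ as well (using the $L_j$ and $\bend(\tilde g)\subseteq C$), this gives $\bend(\nu(g))\subseteq C$ and finishes the proof. Here the total order on $S$ is essential: whenever $\nu(g)$ has a strictly smaller coefficient than $\tilde g$ at some $\mu_j$ --- a cancellation, which by Lemma~\ref{lem:val}(1) forces $\nu(c_ja)=\nu(c_{j+1}b)$ --- one checks that a neighbouring coefficient of $\tilde g$ is $\ge\tilde g(\mu_j)$, so the global maximum $\gamma$ of $\tilde g$ is attained at some \emph{non-cancellation} coordinate $\mu^*$, at which $\nu(g)$ and $\tilde g$ agree with value $\gamma$. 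Running the ladder relations along the path and using idempotency of addition, one transfers all the mass of $\nu(g)$ (likewise of $\tilde g$, and of $\nu(g)_{\widehat\mu}$ for $\mu\ne\mu^*$) onto $\mu^*$; for the remaining case $\mu=\mu^*$ one grows the $\mu^*$-coefficient back up to $\gamma$ by a finite cascade of $L_j$'s directed toward a path endpoint (which is automatically a non-cancellation coordinate). The delicate bookkeeping is precisely this: keeping track of the coefficients $\nu(c_ja+c_{j+1}b)$ produced by subadditivity and verifying that the resulting deficits never disconnect $\mu^*$ from the rest of the ladder. The rest --- the degenerate reductions, the membership of the ladder relations in $C$, and the path decomposition --- is formal.
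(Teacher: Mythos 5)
Your overall scaffolding matches the paper's: reduce to showing $\bend(\nu(hf))\subseteq C$ for each $h$, observe that the only ingredient you get from $C$ is the ladder relation $\lambda\nu(a)(mx)\sim\lambda\nu(b)(my)$, decompose $\supp(h)$ into directed paths using torsion-freeness and integrality of $A$, and reduce to a single path. The paper does all of this. Where you diverge is the final synthesis, and this is where your proposal has a concrete error. The paper's proof is local: for each $\mu\in\supp(\nu(hf))$ it shows the term $\nu(hf)(\mu)\,\mu$ is absorbed modulo $C$ by the remaining terms, by cascading it one ladder step at a time until it meets a non-cancellation coefficient or the far endpoint, at which point absorption is immediate. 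Your proposal is global: collapse $\nu(hf)$ and each of its deletions to one fixed monomial $\gamma\mu^*$ modulo $C$, where $\gamma$ is the maximal coefficient of $\tilde g=\nu(h)\nu(f)$ and $\mu^*$ a non-cancellation coordinate where it is attained.

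The error is that this collapse does not land on $\gamma\mu^*$. Pushing a term $\alpha\mu_j$ along the ladder to $\mu^*=\mu_{j^*}$ rescales its coefficient by $(\nu(b)/\nu(a))^{j-j^*}$ (a genuine rescaling whenever $\nu(a)\neq\nu(b)$), so the pushed coefficient is $\sum_j\nu(g)(\mu_j)\,(\nu(b)/\nu(a))^{j-j^*}$, which can strictly exceed $\gamma$ even when $\nu(g)(\mu_{j^*})=\gamma$. Concretely take $\nu(a)=0$, $\nu(b)=10$ in $\T$, $h=c_0m_0+c_1m_1+c_2m_2$ with all $\nu(c_i)=0$ and no cancellation: $\tilde g$ and $\nu(g)$ have coefficients $(10,10,10,0)$ on $(\mu_{-1},\mu_0,\mu_1,\mu_2)$, so $\gamma=10$ and $\mu_0$ is a legal choice of non-cancellation max-coordinate; but pushing $\nu(g)$ to $\mu_0$ yields coefficient $\max(0,10,20,20)=20\neq\gamma$, and indeed mapping to the quotient $\T[x,y]/\langle x\sim 10y\rangle$ (with $m_j$ realized as Laurent monomials) one checks $\nu(g)\not\equiv 10\mu_0$. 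With $\mu^*=\mu_1$ it happens to work, but your criterion does not single out $\mu_1$, so as stated the argument breaks. The strategy could be salvaged by instead defining $\mu^*$ and the target scalar via the pushed values rather than the raw coefficients of $\tilde g$, and then proving (by an analysis very close to what you sketch, but with the rescaled quantities $u_j\defeq\nu(c_j)\nu(b)^{j+1}/\nu(a)^j$ in place of $\nu(c_j)$) that the maximal pushed value is always attained at least twice; but that is exactly the bookkeeping you defer, and its correct formulation is different from the one you write down. The paper's local absorption argument is immune to this issue precisely because it never accumulates all the mass in one place and so never has to compare rescaled coefficients across long distances.
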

\begin{proof}
  We must show that $\langle\bend(\nu(f))\rangle$ implies $\langle\bend(\nu(fg))\rangle$ for any $g\in A\otimes R$.  Since $f$
  is a binomial, $\langle\bend(\nu(f))\rangle$ is generated by the single relation $\nu(a)x \sim \nu(b)y$.

  We define a binary relation `$\to$' on the set $\supp(g)$ as follows: $z_1 \to z_2$ if
  $z_1 x = z_2 y$.  This generates an equivalence relation; let $\{C_i\}$ be the set of equivalence
  classes.  Note that $C_i x \cup C_i y$ is necessarily disjoint from $C_j x\cup C_j y$ if
  $i\neq j$.  Hence we can, without loss of generality, assume that $\supp(g)$ consists of just a
  single equivalence class $C$. If $C$ consists of a single element then the claim holds trivially,
  so we assume that $C$ consists of at least 2 elements.

  Since $A$ is integral and torsion free, $C$ must consist of a sequence of elements $z_1, \ldots,
  z_n$ such that $z_i x = z_{i+1} y$ (having a loop would imply that $xy^{-1}$ is a torsion element
  in the group completion of $A$, and the integral condition implies that if $x\to y$ and $x\to y'$
  then $y=y'$).

  Let $c_i$ be the coefficient of $z_i$ in $g$.  We then have
  \[
  \nu(fg) = \nu(ac_n) z_n x + \nu (ac_{n-1} + bc_n)z_{n-1}x + \cdots + \nu(ac_1 + bc_2)z_1 x +
  \nu(bc_1)z_1 y.
  \]
  We first show that the relation $\nu(a)x \sim \nu(b)y$ allows the first term, $\nu(ac_n) z_n x$,
  to be absorbed into one of the terms to its right.  First,
  \[
  \nu(ac_n) z_n x \sim \nu(bc_n)z_{n}y = \nu(bc_n)z_{n-1}x.
  \]
  Either $\nu (ac_{n-1} + bc_n) = \nu (ac_{n-1}) + \nu(bc_n)$, in which case we are done, or
  $\nu(ac_{n-1}) = \nu(bc_n)$, in which case
  $\nu(bc_n)z_{n-1}x = \nu(ac_{n-1})z_{n-1}x \sim \nu(b c_{n-1})z_{n-2}x$.  We continue in this
  fashion until either the term $\nu(ac_n) z_n x$ absorbs or we reach the end of the chain, at which
  point it will be absorbed into the final term $\nu(bc_1)z_1 y$.  Working from right to left
  instead, the final term can be absorbed into the terms to its left by the same argument.

  Finally, given a middle term, $\nu (ac_{i-1} + bc_i)z_{i-1}x$, we have that $\nu
  (ac_{i-1})z_{i-1}x$ and $\nu(bc_i)z_{i-1}x$ are both larger, and so the above argument in reverse
  allows us to replace the term $\nu (ac_{i-1} + bc_i)z_{i-1}x$ with $\nu (ac_{i-1})z_{i-1}x +
  \nu(bc_i)z_{i-1}x$.  Then the above argument in the forward direction allows these two terms to be
  absorbed into the terms to the right and left respectively.
\end{proof}

\begin{lemma}\label{lem:monombasis}
  Let $A$ be an integral $\Fun$-algebra, $\nu:R \to S$ a valued ring, and $I$ an ideal in
  $A\otimes R$ generated by elements $f_1, \ldots, f_n$.  If
  $\bend\trop(I) = \langle \bend(\nu(f_i)) \rangle_{i=1\ldots n}$, and $J$ is the ideal generated by
  $I$ together with a monomial $f_0\in A$, then
  \[\bend\trop(J) = \langle \bend(\nu(f_i)) \rangle_{i=0\ldots n}.\]
\end{lemma}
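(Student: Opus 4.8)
The plan is to first establish the cleaner identity
\[
\bend\trop(J) \;=\; \bend\trop(I) \;\vee\; \langle x \sim 0_S\rangle ,
\]
where $x := \supp(f_0)$ is the monomial underlying $f_0$ (rescaling so that its coefficient is $1$, so that $\langle\bend(\nu(f_0))\rangle = \langle x\sim 0_S\rangle$) and $\vee$ denotes the join of semiring congruences on $A\otimes S$. This identity needs no tropical-basis hypothesis on $I$; the statement as written then follows by substituting $\bend\trop(I) = \langle\bend(\nu(f_i))\rangle_{i=1}^n$ and using $\langle C\rangle\vee\langle C'\rangle = \langle C\cup C'\rangle$. The inclusion $\supseteq$ is easy: $I\subset J$ gives $\bend\trop(I)\subset\bend\trop(J)$, while $(f_0)\subset J$ gives $x=\nu(f_0)\in\trop(J)$, hence $\langle x\sim 0_S\rangle\subset\bend\trop(J)$ (using Lemma \ref{lem:trop-lintrop}(1) to know $\bend\trop(J)$ is a semiring congruence).

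For the reverse inclusion I would set $K := \bend\trop(I)$ and $L := \langle x\sim 0_S\rangle$. Since, by Lemma \ref{units-lemma}(2), $\bend\trop(J)$ is generated by the module congruences $\bend(\nu(g))$ for $g\in J$, it suffices to show that every bend relation of every $\nu(g)$ lies in $K\vee L$. First I would partition the monomials of $A$ into those divisible by $x$ and those not; for $p\in A\otimes S$ write $p = p' + p''$ for the corresponding splitting of its support, and record the elementary fact that $p\sim p'$ modulo $L$, since every term of $p''$ is a multiple of $x$. Given $g\in J$, write $g = h + \xi$ with $h\in I$ and $\xi\in(f_0)$. Because every term of $\xi$ is divisible by $x$, the polynomials $g$ and $h$ have identical coefficients on the monomials not divisible by $x$, so after coefficientwise valuation $\nu(g)' = \nu(h)'$.

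Next I would fix $a\in\supp(\nu(g))$ and analyze the bend relation $\nu(g)\sim\nu(g)_{\widehat a}$ in two cases. If $a$ is divisible by $x$, then $\nu(g)$ and $\nu(g)_{\widehat a}$ differ only by a single term that is a multiple of $x$, so the relation already lies in $L$. If $a$ is not divisible by $x$, then $a\in\supp(\nu(g)') = \supp(\nu(h)')\subset\supp(\nu(h))$, so $\nu(h)\sim\nu(h)_{\widehat a}$ is a bend relation of $\nu(h)\in\trop(I)$ and hence lies in $K$; working modulo $K\vee L$ I would then chain
\[
\nu(g) \;\sim\; \nu(g)' \;=\; \nu(h)' \;\sim\; \nu(h) \;\sim\; \nu(h)_{\widehat a} \;\sim\; (\nu(h)')_{\widehat a} \;=\; (\nu(g)')_{\widehat a} \;\sim\; \nu(g)_{\widehat a},
\]
in which all but one of the displayed equivalences merely delete or reinsert multiples of $x$ (hence lie in $L$), the remaining one is the bend relation lying in $K$, and the two equalities use $\nu(g)' = \nu(h)'$. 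This yields $\bend(\nu(g))\subset K\vee L$ for all $g\in J$, completing the argument.

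I expect the chain in the last step to be the only real obstacle: the bookkeeping of the three pieces — the $x$-free part of $g$ (which equals that of $h$), the $x$-divisible part of $g$, and the $x$-divisible part of $h$ — together with the key observation that modulo $\langle x\sim 0_S\rangle$ the bend relations of $\nu(g)$ collapse onto those of $\nu(h)$, which is exactly the point where $h\in I$ is exploited. Everything else is formal manipulation of congruences. (In the application to Lemma \ref{lem:restriction-of-trop} one invokes this with the tautological generating set $\{f\}_{f\in I}$, for which the hypothesis $\bend\trop(I)=\langle\bend(\nu(f))\rangle_{f\in I}$ holds automatically by Proposition \ref{prop:intersection-of-bend-loci}.)
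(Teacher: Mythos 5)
Your proof is correct and takes essentially the same route as the paper: both write $g\in J$ as $h+\xi$ with $h\in I$, $\xi\in(f_0)$ (the paper via $g=\sum_{i=0}^n h_i f_i$), observe that $\nu(g)$ and $\nu(h)$ agree on monomials outside $x\cdot A$, and chain $\nu(g)\sim\nu(h)\sim\nu(h)_{\widehat a}\sim\nu(g)_{\widehat a}$ using the relation $x\sim 0_S$ to pass between them. The only cosmetic differences are that you factor through the unconditional identity $\bend\trop(J)=\bend\trop(I)\vee\langle x\sim 0_S\rangle$ and explicitly dispose of the easy case where $a$ is $x$-divisible (which the paper's chain handles somewhat implicitly); this is a tidying of the paper's argument rather than a genuinely different approach.
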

\begin{proof}
  We will show that the generating relations of $\bend\trop(J)$ are all contained in the
  sub-congruence $\langle \bend(\nu(f_i))\rangle_{i=0\ldots n}$.  Let
  $g = \sum_{i=0}^n h_i f_i \in J$, with $h_i \in A\otimes R$.  Since
  $\langle\bend(f_0)\rangle = \langle f_0 \sim 0_S \rangle$, for any $F \in A\otimes S$, the
  congruence $\langle\bend(f_0)\rangle$ contains the relation $F \cdot f_0 \sim 0_S$.  This means
  that if $F, F' \in A\otimes S$ have identical coefficients away from the set of monomials
  $f_0 \cdot A$, then the relation $F \sim F'$ is contained in $\langle\bend(f_0)\rangle$.
  
  Consider the polynomials $F = \nu(g)$ and $F' = \nu(\sum_{i=1}^n h_i f_i)$ in $A\otimes S$; they
  dffer only outside of $f_0 \cdot A$, as do $F_{\widehat{j}}$ and $F'_{\widehat{j}}$.  In
  $\langle \bend(\nu(f_i))\rangle_{i=0\ldots n}$ we thus have the relations
\begin{align*}
F &\sim F'  & \text{from } \bend(f_0) \\
   & \sim F'_{\widehat{j}} & \text{from } \bend\trop(I) = \langle \bend(\nu(f_i)) \rangle_{f = 1\ldots n} \\
  & \sim F_{\widehat{j}} & \text{from } \bend(f_0), \\
\end{align*}
This completes the proof.
\end{proof}

If $f_0$ is instead a binomial then the analogue of the above lemma can fail.

\begin{example}
  Consider $f_1=x-y$ and $f_0=x+y$ in $R[x,y]$ and the trivial valuation $\nu : R \rightarrow \B$.
  By Proposition \ref{prop:trop-basis}, $\bend\trop(f_1) = \langle\bend(\nu(f_1))\rangle$.  However, $\langle
  \bend(\nu(f_0)), \bend(\nu(f_1))\rangle = \langle x \sim y \rangle$ is not the tropicalization of
  the ideal $(f_0,f_1)$, since the latter contains the bend relation of $\nu(f_0+f_1)=x$, namely
  $\langle x \sim -\infty \rangle$, which is not implied by the former.
\end{example}

\subsection{Tropical bases}\label{sec:trop-bases}

It is well-known that the set-theoretic tropicalization of the variety defined by an ideal is not
necessarily equal to the intersection of the (set-theoretic) tropical hypersurfaces associated with
a set of generators of this ideal.  A set of generators for which this holds is called a
\emph{tropical basis} in \cite[\S2.5]{MS}, where this notion is studied and related to Gr\"obner theory.
We use the term \emph{set-theoretic tropical basis} for this concept to distinguish it from the
following notion of tropical basis that arises when considering scheme-theoretic tropicalization.

\begin{definition}
Let $\nu: R \to S$ be a valued ring, $X$ a locally integral $\Fun$-scheme, and $Z\subset X_R$ a closed
subscheme.  A \emph{scheme-theoretic tropical basis} for $Z$ is a set $\{Y_1, Y_2, \ldots\}$
of hypersurfaces in $X_R$ containing $Z$ such that the following scheme-theoretic intersections hold:
\[
Z = \bigcap_i Y_i \text{\:\: and \:\:} \Trop(Z) = \bigcap_i \Trop(Y_i).
\]
\end{definition}

In the affine case, say $X = \spec A$ and $Z = \spec A\otimes R/I$, a scheme-theoretic tropical
basis is a generating set $\{f_1,f_2,\ldots\}$ for the ideal $I$ such that the corresponding
congruences $\bend\trop(f_i)$, obtained by tropicalizing the principal ideals $(f_i)$, generate the
congruence $\bend\trop(I)$.  Note that this is generally a weaker requirement than the requirement
that the bend relations of the $f_i$ generate $\bend\trop(I)$.  For instance, for a principal ideal
$I = (f)$ it is automatic that $\{f\}$ is a scheme-theoretic tropical basis, whereas it is not
always the case, as discussed above in \S\ref{sec:vs} and Example \ref{ex:tropbas}, that
$\bend\trop(I) = \langle\bend(\nu(f))\rangle$.

Not surprisingly, being a scheme-theoretic tropical basis is a stronger requirement than being a
set-theoretic tropical basis.

\begin{example}
  Let $R=k[x,y,z]$ with the trivial valuation.  As discussed in \cite[Example 3.2.2]{MS}, the
  elements $x+y+z$ and $x+2y$ do not form a tropical basis for the ideal $I$ they generate, since
  $y-z\in I$ tropically yields the relation $y\sim z$ which is not contained in $\langle
  \bend(x+y+z),\bend(x+y)\rangle$.  This can be rectified by adding the element $y-z$, and indeed
  these three polynomials form a set-theoretic tropical basis for $I$.  However, if we instead add
  the element $(y-z)^2\in I$ then the corresponding congruence has the same $\T$-points, so this is
  still a set-theoretic tropical basis, but it is no longer a scheme-theoretic tropical basis since
  the relation $y\sim z$ is still missing.
\end{example}

\begin{remark}
  It is known that subvarieties of affine space defined over an algebraically closed field with
  non-trivial valuation admit \emph{finite} set-theoretic tropical bases (see \cite[Corollary
  2.3]{Speyer1} and \cite[Corollary 3.2.3]{MS}).  It would be interesting to see if this also holds
  scheme-theoretically.
\end{remark}

\section*{Table of notation}

\small
\begin{center}
\begin{tabular}{|c|c|c|}
\hline
Symbol & Description & Reference\\
\hline
$\ker\varphi$ & the congruence kernel $S\times_R S$ of a semiring homomorphism $\varphi : S \rightarrow R$ & Definition \ref{def:congker}\\
$\langle J \rangle$ & semiring congruence on $S$ generated by a set of pairs $J \subset S\times S$ & Lemma \ref{lem:CongGen}\\ 
$\bend(f)$ & module congruence of ``bend relations'' of a tropical polynomial $f$ & Definition \ref{def:setbend}\\ 
$\setBend(f)$ & set-theoretic bend locus (``tropical hypersurface'') of a tropical polynomial $f$ & Definition \ref{def:setbend}\\ 
$\Bend(f)$ & scheme-theoretic bend locus of a tropical polynomial $f$ & Definition \ref{def:bendideal}\\ 
$\nu(f)$ & tropical polynomial obtained by coefficient-wise valuation of a polynomial $f$ & \S\ref{sec:tropconstruct}\\
$\trop(I)$ & tropical ideal associated to an ideal $I$ in a valued ring& Proposition \ref{prop:trop-sends-ideals-to-ideals}\\
$\trop(Z)$ & set-theoretic tropicalization of a subvariety $Z$ & Eq. \eqref{eqn:settrop}\\
$\Trop(Z)$ & scheme-theoretic tropicalization of a subscheme $Z$ & Definition \ref{def:affinetrop}\\ 
$\dim_S M$ & maximal cardinality of an independent set in an $S$-module $M$ & Definition \ref{def:lindim}\\
$M^\vee$ & $S$-linear dual $\Hom_S(M,S)$ of an $S$-module & \S\ref{sec:unicity}\\ 
$\nu_{univ}^R$ & universal valuation on a ring $R$, with values in semiring $S_{univ}^R$ & \S\ref{sec:valuations}\\
$\val(R)$ & moduli space of valuations on a ring $R$ & \S\ref{sec:moduli}\\
\hline
\end{tabular}
\end{center}
\normalsize

\bibliographystyle{amsalpha}
\bibliography{bib}

\providecommand{\bysame}{\leavevmode\hbox to3em{\hrulefill}\thinspace}
\providecommand{\MR}{\relax\ifhmode\unskip\space\fi MR }
\providecommand{\MRhref}[2]{%
  \href{http://www.ams.org/mathscinet-getitem?mr=#1}{#2}
}
\providecommand{\href}[2]{#2}
\begin{thebibliography}{ACMW14}

\bibitem[AB13]{Ardila-Block}
Federico Ardila and Florian Block, \emph{Universal polynomials for {S}everi
  degrees of toric surfaces}, Adv. Math. \textbf{237} (2013), 165--193.

\bibitem[ACMW14]{ACMW}
Dan Abramovich, Qile Chen, Steffen Marcus, and Jonathan Wise, \emph{Boundedness
  of the space of stable logarithmic maps}, ar{X}iv:1408.0869, 2014.

\bibitem[ACP15]{Abramovich-Caporaso-Payne}
Dan Abramovich, Lucia Caporaso, and Sam Payne, \emph{The tropicalization of the
  moduli space of curves}, Ann. Sci. \'Ec. Norm. Sup\'er. (4) \textbf{48}
  (2015), no.~4, 765--809.

\bibitem[AN13]{Tropical-Hilbert}
Daniele Alessandrini and Michele Nesci, \emph{On the tropicalization of the
  {H}ilbert scheme}, Collect. Math. \textbf{64} (2013), no.~1, 39--59.

\bibitem[AW13]{Abramovich-Wise}
Dan Abramovich and Jonathan Wise, \emph{Invariance in logarithmic
  {G}romov-{W}itten theory}, ar{X}iv:1306.1222, 2013.

\bibitem[Bak08]{Baker}
Matthew Baker, \emph{Specialization of linear systems from curves to graphs},
  Algebra Number Theory \textbf{2} (2008), no.~6, 613--653, With an appendix by
  Brian Conrad.

\bibitem[Ban15]{Banerjee}
Soumya~D. Banerjee, \emph{Tropical geometry over higher dimensional local
  fields}, J. Reine Angew. Math. \textbf{698} (2015), 71--87.

\bibitem[Ber90]{Berkovich}
Vladimir~G. Berkovich, \emph{Spectral theory and analytic geometry over
  non-{A}rchimedean fields}, Mathematical Surveys and Monographs, vol.~33,
  American Mathematical Society, Providence, RI, 1990.

\bibitem[CC10]{Connes-Consani}
Alain Connes and Caterina Consani, \emph{Schemes over {$\Bbb F\sb 1$} and zeta
  functions}, Compos. Math. \textbf{146} (2010), no.~6, 1383--1415.

\bibitem[CDPR12]{Brill-Noether}
Filip Cools, Jan Draisma, Sam Payne, and Elina Robeva, \emph{A tropical proof
  of the {B}rill-{N}oether theorem}, Adv. Math. \textbf{230} (2012), no.~2,
  759--776.

\bibitem[Cox95]{Cox}
David~A. Cox, \emph{The homogeneous coordinate ring of a toric variety}, J.
  Algebraic Geom. \textbf{4} (1995), no.~1, 17--50.

\bibitem[CV10]{Caporaso-Viviani}
Lucia Caporaso and Filippo Viviani, \emph{Torelli theorem for graphs and
  tropical curves}, Duke Math. J. \textbf{153} (2010), no.~1, 129--171.

\bibitem[Dei08]{Deitmar}
Anton Deitmar, \emph{{$\Bbb F\sb 1$}-schemes and toric varieties}, Beitr\"age
  Algebra Geom. \textbf{49} (2008), no.~2, 517--525.

\bibitem[DFS07]{tropdiscrim}
Alicia Dickenstein, Eva~Maria Feichtner, and Bernd Sturmfels, \emph{Tropical
  discriminants}, J. Amer. Math. Soc. \textbf{20} (2007), no.~4, 1111--1133.

\bibitem[Dur07]{Durov}
Nikolai Durov, \emph{New approach to arakelov geometry}, arXiv:0704.2030, 2007.

\bibitem[FGG16]{GGFink}
Alex Fink, Jeffrey Giansiracusa, and Noah Giansiracusa, \emph{Tropical
  hypersurfaces and valuated matroids}, In preparation, 2016.

\bibitem[FM10]{Fomin-Mikhalkin}
Sergey Fomin and Grigory Mikhalkin, \emph{Labeled floor diagrams for plane
  curves}, J. Eur. Math. Soc. (JEMS) \textbf{12} (2010), no.~6, 1453--1496.

\bibitem[Fre13]{Frenk}
Bart Frenk, \emph{Tropical varieties, maps and gossip}, Ph.D. thesis, Eindhoven
  University of Technology, 2013.

\bibitem[FW14]{Weibel}
Jaret Flores and Charles Weibel, \emph{Picard groups and class groups of monoid
  schemes}, J. Algebra \textbf{415} (2014), 247--263.

\bibitem[FZ02]{Cluster-I}
Sergey Fomin and Andrei Zelevinsky, \emph{Cluster algebras. {I}.
  {F}oundations}, J. Amer. Math. Soc. \textbf{15} (2002), no.~2, 497--529
  (electronic).

\bibitem[GG14]{GG2}
Jeffrey Giansiracusa and Noah Giansiracusa, \emph{The universal tropicalization
  and the berkovich analytification}, ar{X}iv:1410.4348, 2014.

\bibitem[GL13]{Russian-rep}
Anton~A. Gerasimov and Dimitri~R. Lebedev, \emph{Representation theory over
  tropical semifield and {L}anglands duality}, Comm. Math. Phys. \textbf{320}
  (2013), no.~2, 301--346.

\bibitem[GM08]{Gathmann-Markwig}
Andreas Gathmann and Hannah Markwig, \emph{Kontsevich's formula and the {WDVV}
  equations in tropical geometry}, Adv. Math. \textbf{217} (2008), no.~2,
  537--560.

\bibitem[GM12]{Gibney-Maclagan}
Angela Gibney and Diane Maclagan, \emph{Lower and upper bounds for nef cones},
  Int. Math. Res. Not. IMRN (2012), no.~14, 3224--3255.

\bibitem[Gol99]{Golan}
Jonathan Golan, \emph{Semirings and their applications}, Springer
  Science+Business Media Dordrecht, Kluwer, 1999.

\bibitem[GPS10]{Tropical-vertex}
Mark Gross, Rahul Pandharipande, and Bernd Siebert, \emph{The tropical vertex},
  Duke Math. J. \textbf{153} (2010), no.~2, 297--362.

\bibitem[Gro10]{Gross-P2}
Mark Gross, \emph{Mirror symmetry for {$\Bbb P^2$} and tropical geometry}, Adv.
  Math. \textbf{224} (2010), no.~1, 169--245.

\bibitem[Gro11]{Gross-tropical}
\bysame, \emph{Tropical geometry and mirror symmetry}, CBMS Regional Conference
  Series in Mathematics, vol. 114, Published for the Conference Board of the
  Mathematical Sciences, Washington, DC, 2011.

\bibitem[Gub07]{Gubler}
Walter Gubler, \emph{The {B}ogomolov conjecture for totally degenerate abelian
  varieties}, Invent. Math. \textbf{169} (2007), no.~2, 377--400.

\bibitem[Har77]{Hartshorne}
Robin Hartshorne, \emph{Algebraic geometry}, Springer-Verlag, New York, 1977,
  Graduate Texts in Mathematics, No. 52.

\bibitem[HKT09]{Hacking-Keel-Tevelev}
Paul Hacking, Sean Keel, and Jenia Tevelev, \emph{Stable pair, tropical, and
  log canonical compactifications of moduli spaces of del {P}ezzo surfaces},
  Invent. Math. \textbf{178} (2009), no.~1, 173--227.

\bibitem[Hub96]{Huber}
Roland Huber, \emph{\'{E}tale cohomology of rigid analytic varieties and adic
  spaces}, Aspects of Mathematics, E30, Friedr. Vieweg \& Sohn, Braunschweig,
  1996.

\bibitem[HW98]{Hebisch-Weinert}
Udo Hebisch and Hanns~Joachim Weinert, \emph{Semirings: algebraic theory and
  applications in computer science}, Series in Algebra, vol.~5, World
  Scientific Publishing Co., Inc., River Edge, NJ, 1998, Translated from the
  1993 German original.

\bibitem[Kaj08]{Kajiwara}
Takeshi Kajiwara, \emph{Tropical toric geometry}, Toric topology, Contemp.
  Math., vol. 460, Amer. Math. Soc., Providence, RI, 2008, pp.~197--207.

\bibitem[Kat89]{Kato}
Kazuya Kato, \emph{Logarithmic structures of {F}ontaine-{I}llusie}, Algebraic
  analysis, geometry, and number theory ({B}altimore, {MD}, 1988), Johns
  Hopkins Univ. Press, Baltimore, MD, 1989, pp.~191--224.

\bibitem[Kat09]{Katz}
Eric Katz, \emph{A tropical toolkit}, Expo. Math. \textbf{27} (2009), no.~1,
  1--36.

\bibitem[Lor12]{Lorscheid-blueprints1}
Oliver Lorscheid, \emph{The geometry of blueprints: {P}art {I}: {A}lgebraic
  background and scheme theory}, Adv. Math. \textbf{229} (2012), no.~3,
  1804--1846.

\bibitem[LPL11]{F1land}
Javier L{\'o}pez~Pe{\~n}a and Oliver Lorscheid, \emph{Mapping {$\Bbb
  F_1$}-land: an overview of geometries over the field with one element},
  Noncommutative geometry, arithmetic, and related topics, Johns Hopkins Univ.
  Press, Baltimore, MD, 2011, pp.~241--265.

\bibitem[{Mac}13]{MacPherson}
Andrew {Macpherson}, \emph{{Skeleta in non-Archimedean and tropical geometry}},
  ar{X}iv:1311.0502, 2013.

\bibitem[Man11]{Manon}
Christopher Manon, \emph{Dissimilarity maps on trees and the representation
  theory of {${\rm SL}_m(\Bbb C)$}}, J. Algebraic Combin. \textbf{33} (2011),
  no.~2, 199--213.

\bibitem[Mik05]{Mikhalkin-P2}
Grigory Mikhalkin, \emph{Enumerative tropical algebraic geometry in {$\Bbb
  R^2$}}, J. Amer. Math. Soc. \textbf{18} (2005), no.~2, 313--377.

\bibitem[Mik06]{Mikhalkin-ICM}
\bysame, \emph{Tropical geometry and its applications}, International
  {C}ongress of {M}athematicians. {V}ol. {II}, Eur. Math. Soc., Z\"urich, 2006,
  pp.~827--852.

\bibitem[MR14]{Maclagan-Rincon}
Diane {Maclagan} and Felipe {Rinc{\'o}n}, \emph{{Tropical schemes, tropical
  cycles, and valuated matroids}}, ar{X}iv:1401.4654, 2014.

\bibitem[MR15]{Maclagan-Rincon-Tpts}
\bysame, \emph{Tropical ideals}, Submitted to Discrete Mathematics \&
  Theoretical Computer Science, 2015.

\bibitem[MS15]{MS}
Diane Maclagan and Bernd Sturmfels, \emph{Introduction to {T}ropical
  {G}eometry}, Graduate Studies in Mathematics, vol. 161, American Mathematical
  Society, Providence, RI, 2015.

\bibitem[MZ08]{Mikhalkin-Zharkov}
Grigory Mikhalkin and Ilia Zharkov, \emph{Tropical curves, their {J}acobians
  and theta functions}, Curves and abelian varieties, Contemp. Math., vol. 465,
  Amer. Math. Soc., Providence, RI, 2008, pp.~203--230.

\bibitem[Ols03]{Olsson}
Martin Olsson, \emph{Logarithmic geometry and algebraic stacks}, Ann. Sci.
  \'Ecole Norm. Sup. (4) \textbf{36} (2003), no.~5, 747--791.

\bibitem[OP13]{Osserman-Payne}
Brian Osserman and Sam Payne, \emph{Lifting tropical intersections}, Documenta
  Mathematica \textbf{18} (2013), 121--175.

\bibitem[Pay09]{Payne}
Sam Payne, \emph{Analytification is the limit of all tropicalizations}, Math.
  Res. Lett. \textbf{16} (2009), no.~3, 543--556.

\bibitem[PS04]{Pachter-Sturmfels}
Lior Pachter and Bernd Sturmfels, \emph{Tropical geometry of statistical
  models}, Proc. Natl. Acad. Sci. USA \textbf{101} (2004), no.~46, 16132--16137
  (electronic).

\bibitem[RGST05]{Sturmfels-first-steps}
J{\"u}rgen Richter-Gebert, Bernd Sturmfels, and Thorsten Theobald, \emph{First
  steps in tropical geometry}, Idempotent mathematics and mathematical physics,
  Contemp. Math., vol. 377, Amer. Math. Soc., Providence, RI, 2005,
  pp.~289--317.

\bibitem[RSS14]{Sturmfels-Sam-Ren}
Qingchun Ren, Steven~V. Sam, and Bernd Sturmfels, \emph{Tropicalization of
  classical moduli spaces}, Math. Comput. Sci. \textbf{8} (2014), no.~2,
  119--145.

\bibitem[Spe08]{Speyer2}
David Speyer, \emph{Tropical linear spaces}, SIAM J. Discrete Math. \textbf{22}
  (2008), no.~4, 1527--1558.

\bibitem[SS04]{Speyer1}
David Speyer and Bernd Sturmfels, \emph{The tropical {G}rassmannian}, Adv.
  Geom. \textbf{4} (2004), no.~3, 389--411.

\bibitem[Tev07]{Tevelev}
Jenia Tevelev, \emph{Compactifications of subvarieties of tori}, Amer. J. Math.
  \textbf{129} (2007), no.~4, 1087--1104.

\bibitem[TV09]{Toen-Vaquie}
Bertrand To{\"e}n and Michel Vaqui{\'e}, \emph{Au-dessous de {${\rm Spec}\,\Bbb
  Z$}}, J. K-Theory \textbf{3} (2009), no.~3, 437--500.

\bibitem[Uli14]{Ulirsch-trop}
Martin Ulirsch, \emph{Functorial tropicalization of logarithmic schemes: The
  case of constant coefficients}, ar{X}iv:1310.6269, 2014.

\end{thebibliography}
\end{document}